\documentclass[12pt]{article}
\usepackage[top=30truemm,bottom=30truemm,left=20truemm,right=20truemm]{geometry}
\usepackage{amsmath}
\usepackage{amsthm}
\usepackage{amssymb}
\usepackage{amsfonts}
\usepackage{mathrsfs}
\usepackage{mathtools}
\usepackage{cite}
\usepackage{comment}
\usepackage{enumitem}
\usepackage{lineno}

\numberwithin{equation}{section}

\newtheorem*{ass*}{\textbf{Assumption}}
\newtheorem{thm}{\textbf{Theorem}}[section]
\newtheorem*{thm*}{\textbf{Theorem}}
\newtheorem{prop}[thm]{\textbf{Proposition}}
\newtheorem*{prop*}{\textbf{Proposition}}
\newtheorem{lem}[thm]{\textbf{Lemma}}
\newtheorem*{lem*}{\textbf{Lemma}}

\theoremstyle{definition}

\newtheorem*{dfn*}{\textbf{Definition}}
\newtheorem{rem}[thm]{\textbf{Remark}}
\newtheorem*{rem*}{\textbf{Remark}}

\mathtoolsset{showonlyrefs}

\title{Stability of travelling waves to Korteweg--de Vries type equations with fractional dispersion}
\author{Kaito KOKUBU\thanks{Department of Mathematics, Graduate School of Science, Tokyo University of Science. 1-3 Kagurazaka, Shinjuku-ku, Tokyo 162-8601, Japan.
e-mail: \texttt{1123701@ed.tus.ac.jp}}}
\date{}

\begin{document}

\maketitle

\begin{abstract}
    We study stability of travelling wave solutions to Korteweg--de Vries type equations which has the fractional dispersion and integer-indices double power nonlinearities.
    It may depend on parity combinations of the two indices and the strength of dispersion whether these equations have a ground state solution.
    Therefore, we observe the stability phenomena on travelling wave solutions from the perspective of the parities and the dispersion, and we give the classification of phenomena on travelling wave solutions.
    In this paper, we focus on stable travelling wave solutions.
\end{abstract}

\noindent \textbf{2020 Mathematics Subject Classification: 35Q53, 35R11, 35B53, 35C07}\\
\textbf{Keywords and Phrases:} Korteweg--de Vries equation; Benjamin--Ono equation; Fractional Laplacian; Travelling wave; Stability; Ground state solution.

\section{Introduction} \label{section:Introduction;gkdv_st}
In this paper, we consider the stability of travelling wave solutions to Korteweg--de Vries type equations
\begin{equation}
    \partial_{t} u + \partial_{x} f(u) - \partial_{x} D_{x}^{\sigma} u = 0, \quad (t,x) \in \mathbb{R} \times \mathbb{R}, \label{eq:Intro0100;gkdv_st}
\end{equation}
where $u=u(t,x)$ is a real-valued unknown function, $\sigma$ is a real number satisfying $1 \leq \sigma \leq 2$, and the operator $D_{x}^{\sigma}$ is the Fourier multiplier with symbol $|\xi|^{\sigma}$.
The operator $D_{x}^{\sigma}$ is also denoted as $(- \partial_{x}^{2})^{\sigma/2}$.

When $\sigma = 2$ and $f(s) = s^{2} (s \in \mathbb{R})$, the equation \eqref{eq:Intro0100;gkdv_st} coincides with the Korteweg--de Vries equation, which describes physical dynamics of waves on shallow water (see, e.g.\ \cite{Korteweg-deVries}).
When $\sigma = 1$ and $f(s) = s^{2}$, the equation \eqref{eq:Intro0100;gkdv_st} is the Benjamin--Ono equation, which physically describes dynamics of internal waves in stratified fluids (see, e.g.\ \cite{Benjamin,Ono_1975}).

A travelling wave solution is a solution to \eqref{eq:Intro0100;gkdv_st} of form $u(t,x) = \phi(x-ct)$, where $c$ is a positive constant representing the speed of the wave, and $\phi \in H^{\sigma/2}(\mathbb{R})$ is a nontrivial solution to the stationary problem
\begin{equation}
    D_{x}^{\sigma} \phi + c \phi - f(\phi) = 0, \quad x \in \mathbb{R}. \label{eq:Intro0400;gkdv_st}
\end{equation}
We say that $\phi \in H^{\sigma/2}(\mathbb{R})$ is \textit{a ground state solution} to \eqref{eq:Intro0400;gkdv_st} if $\phi$ satisfies
\begin{equation}
    S_{c}(\phi) = \inf \{ S_{c}(v): v \in H^{\sigma/2}(\mathbb{R}) \setminus \{0\}, \ v \ \text{is a solution to \eqref{eq:Intro0400;gkdv_st}} \},
\end{equation}
where the functional $S_{c}$ is the action functional corresponding to the equation \eqref{eq:Intro0400;gkdv_st} defined in $H^{\sigma/2}(\mathbb{R})$ as $S_{c}(v) = E(v) + c M(v)$ with the energy functional $E$ and the mass functional $M$ defined as
\begin{equation}
    E(v) \coloneq \frac{1}{2} \| D_{x}^{\sigma/2} u \|_{L^{2}}^{2} - \int_{\mathbb{R}} F(u) \, dx, \quad M(v) \coloneq \frac{1}{2} \| v \|_{L^{2}}^{2},
\end{equation}
where $F(s)$ is the primitive function of the nonlinearity $f(s)$.
We remark that $S_{c} \in C^{2}(H^{\sigma/2}(\mathbb{R}), \mathbb{R})$\footnote{In this paper, we mainly consider power type nonlinearities, which are good enough for the functional $S_{c}$ to belong to $C^{2}(H^{\sigma/2}(\mathbb{R}), \mathbb{R})$.}
and that $v \in H^{\sigma/2}(\mathbb{R})$ is a solution to \eqref{eq:Intro0400;gkdv_st} if and only if $S'(v) = 0$, where $G'$ denotes the Fr\'{e}chet derivative of a functional $G$ defined in $H^{\sigma/2}(\mathbb{R})$.
In this paper, we mainly consider travelling wave solutions to \eqref{eq:Intro0400;gkdv_st} constructed by ground state solutions to \eqref{eq:Intro0400;gkdv_st}.

Next, we define the stability and instability of travelling wave solutions.
First, for $r > 0$ and $v \in H^{\sigma/2}(\mathbb{R})$, we set
\begin{equation}
    U_{r}(v) \coloneq \{ u \in H^{\sigma/2}(\mathbb{R}): \inf_{y \in \mathbb{R}} \| v - u(\cdot - y) \|_{H^{\sigma/2}}^{2} < r \}.
\end{equation}
Let $\phi \in H^{\sigma/2}(\mathbb{R})$ be a nontrivial solution to \eqref{eq:Intro0400;gkdv_st}.
We say that a travelling wave solution $\phi(x-ct)$ to \eqref{eq:Intro0100;gkdv_st} is \textit{stable} if for any $\varepsilon > 0$, there exists $\delta > 0$ such that if $u_{0} \in U_{\varepsilon}(\phi)$, then the time-global solution $u(t) \in C([0,\infty), H^{\sigma/2}(\mathbb{R}))$ to \eqref{eq:Intro0100;gkdv_st} exists and satisfies that $u(t) \in U_{\varepsilon}(\phi)$ holds for all $t \geq 0$.
Otherwise, we say that a travelling wave solution $\phi(x-ct)$ is \textit{unstable}.

The stability and instability of travelling wave solutions to \eqref{eq:Intro0100;gkdv_st} with single power nonlinearities $f(s) = s^{p}$ ($p \in \mathbb{N}$, $2 \leq p < \infty$) have been studied well.
In this case, it is known that the stationary problem \eqref{eq:Intro0400;gkdv_st} has the unique positive and even ground state solution (existence: Weinstein~\cite{Weinstein}, uniqueness for $1 \leq \sigma < 2$: Frank--Lenzmann~\cite{Frank-Lenzmann}).
By an abstract theory of the stability and instability of travelling wave solutions developed by Bona--Souganidis--Strauss~\cite{BSS1987}, we can find that the travelling wave solution to \eqref{eq:Intro0100;gkdv_st} constructed with the positive ground state solution to \eqref{eq:Intro0400;gkdv_st} is stable if $p < 2\sigma + 1$, or unstable if $p > 2\sigma + 1$
(for the stability results, see also \cite{Weinstein}).
According to \cite{BSS1987}, travelling wave solutions are stable if $\partial_{c}^{2} d(c) > 0$, or unstable if $\partial_{c}^{2} d(c) < 0$, where $d(c) = S_{c}(\psi_{c})$ with $\psi_{c}$ denoting a ground state solution for $c > 0$.
When nonlinearities of \eqref{eq:Intro0100;gkdv_st} are single power ones, we can find the scaling property of the ground state solution to \eqref{eq:Intro0400;gkdv_st} such that $\psi_{c}(x) = c^{1/(p-1)} \psi_{1}(c^{1/\sigma} x)$.
This property allows us to calculate $\partial_{c}^{2} d(c)$ easily so that we can see that $\partial_{c}^{2} d(c) > 0$ holds if $p < 2\sigma + 1$, or $\partial_{c}^{2} d(c) < 0$ if $p > 2\sigma + 1$.
Here we remark that the exponent $2 \sigma + 1$ is the $L^{2}$-critical one for the KdV type equation \eqref{eq:Intro0100;gkdv_st}.
When $\sigma = 2$ and $p = 2 \sigma + 1 = 5$, Martel--Merle~\cite{Martel-Merle_2001} proved that the travelling wave solution to \eqref{eq:Intro0100;gkdv_st} is unstable.

Meanwhile, it is difficult to analyze the stability of travelling wave solutions to the equation \eqref{eq:Intro0100;gkdv_st} with generalized nonlinearities because we can hardly see the signature of $\partial_{c}^{2} d(c)$ so that the theory of \cite{BSS1987} is not applicable.
Therefore, the purpose of this study is to focus on the equation \eqref{eq:Intro0100;gkdv_st} with specific nonlinearities and observe stability properties of travelling wave solutions.
In this paper, we consider equations \eqref{eq:Intro0100;gkdv_st} with integer-indices double power nonlinearities such as
\begin{equation}
    \partial_{t} u + \partial_{x} (a u^{p} + u^{q}) - \partial_{x} D_{x}^{\sigma} u = 0, \quad (t,x) \in \mathbb{R} \times \mathbb{R}, \label{eq:Intro0200;gkdv_st}
\end{equation}
where $a \in \{ +1, -1 \}$ and $p, q \in \mathbb{N}$ satisfy $2 \leq p < q < \infty$.
The remarkable example of these equations is the case that $\sigma = 2$, $p = 2$, and $q = 3$, where the equation \eqref{eq:Intro0200;gkdv_st} coincides with the so-called Gardner equation, which is introduced by Miura--Gardner--Kruskal~\cite{Miura-Gardner-Kruskal_II}. 

Throughout this paper, we assume the local well-posedness of the Cauchy problem associated with \eqref{eq:Intro0200;gkdv_st} in the energy space $H^{\sigma/2}(\mathbb{R})$.
\begin{ass*} \label{ass:Wellposedness;gkdv_st}
    Let $1 \leq \sigma \leq 2$.
    Then, for any $u_{0} \in H^{\sigma/2}(\mathbb{R})$, there exists $T > 0$ and a unique solution $u(t) \in C([0,T), H^{\sigma/2}(\mathbb{R}))$ to \eqref{eq:Intro0200;gkdv_st} with $u(0) = u_{0}$ which satisfy the following conservation laws:
    \begin{equation}
        E(u(t)) = E(u_{0}), \quad M(u(t)) = M(u_{0}), \quad t \in [0,T).
    \end{equation}
\end{ass*}
We remark that Molinet--Tanaka~\cite{Molinet-Tanaka_2024} showed that this assumption actually holds if $4/3 \leq \sigma \leq 2$.

The stationary problem derived from \eqref{eq:Intro0200;gkdv_st} is the following:
\begin{equation}
    D_{x}^{\sigma} \phi + c \phi - a \phi^{p} - \phi^{q} = 0, \quad x \in \mathbb{R}. \label{eq:Intro0300;gkdv_st}
\end{equation}
The author studied the existence of ground state solutions to \eqref{eq:Intro0200;gkdv_st} with $a = -1$ in a previous paper~\cite{Kokubu2024} and found that existence properties of nontrivial solutions to \eqref{eq:Intro0400;gkdv_st} depend on parity combinations of indices $p$ and $q$.
Here we state the results of the existence of ground state solutions to \eqref{eq:Intro0300;gkdv_st} including the case $a = +1$.

\begin{thm} \label{thm:existence_of_GS;gkdv_st}
    Let $1 \leq \sigma \leq 2$, $p, q \in \mathbb{N}$, $2 \leq p < q < \infty$, and $c > 0$.
    Then the following properties hold:
    \begin{enumerate}[label={\rm (\Roman*)} \ ]
        \item The case $a = +1$.

        If $q$ is odd, then there exists a positive ground state solution to \eqref{eq:Intro0300;gkdv_st}.

        \item The case $a = -1$.

            \begin{enumerate}[label={\rm (\arabic*)} \ ]
                \item If $p$ is odd, then there exists a positive ground state solution to \eqref{eq:Intro0300;gkdv_st}.

                \item If $p$ is even and $q$ is odd, then there exists a negative ground state solution to \eqref{eq:Intro0300;gkdv_st}.
            \end{enumerate}
    \end{enumerate}
\end{thm}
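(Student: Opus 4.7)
The plan is variational: I would realize the ground state as a minimizer of the action $S_c$ on the Nehari manifold
\[
\mathcal{N}_c \coloneq \bigl\{ v \in H^{\sigma/2}(\mathbb{R}) \setminus \{0\} : \langle S_c'(v), v \rangle = 0 \bigr\},
\]
and identify $m_c \coloneq \inf_{\mathcal{N}_c} S_c$ with the ground-state level $d(c)$. The first task is routine: in each of the three sub-cases, verify that $\mathcal{N}_c$ is nonempty (by exhibiting a test function of appropriate sign), that every nonzero $v \in H^{\sigma/2}(\mathbb{R})$ with $\int_{\mathbb{R}} v f(v) \, dx > 0$ admits a unique scaling $\lambda_v v \in \mathcal{N}_c$, and that $m_c > 0$. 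Since every solution of \eqref{eq:Intro0300;gkdv_st} automatically lies in $\mathcal{N}_c$, any minimizer that is also a critical point of $S_c$ will be a ground state.

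Next, I would reduce to minimizers of definite sign. In case (I), since $q$ is odd, one checks $F(|s|) \geq F(s)$ for every $s \in \mathbb{R}$ regardless of the parity of $p$; combined with the fractional P\'{o}lya--Szeg\H{o} inequality $\|D_x^{\sigma/2} |v|\|_{L^2} \leq \|D_x^{\sigma/2} v\|_{L^2}$, this implies that any $v \in \mathcal{N}_c$ can be replaced, after rescaling $|v|$ back onto $\mathcal{N}_c$, by a nonnegative competitor with no larger action. Case (II)(2) reduces to case (I) through the substitution $v \mapsto -w$, which transforms \eqref{eq:Intro0300;gkdv_st} (with $p$ even and $q$ odd) into $D_x^\sigma w + cw = w^p + w^q$, so a positive ground state for the transformed equation yields a negative one for the original. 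Case (II)(1) admits a direct sign reduction essentially carried out in \cite{Kokubu2024}.

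The third step, which I expect to be the main obstacle, is to recover compactness of a minimizing sequence $\{v_n\} \subset \mathcal{N}_c$. Boundedness in $H^{\sigma/2}(\mathbb{R})$ follows from $S_c(v_n) \to m_c$ and $\langle S_c'(v_n), v_n \rangle = 0$, but the embedding $H^{\sigma/2}(\mathbb{R}) \hookrightarrow L^{p+1} \cap L^{q+1}$ fails to be compact on the whole line. I would apply the Lions concentration-compactness principle to the densities $|D_x^{\sigma/2} v_n|^2 + |v_n|^2$: vanishing is ruled out by a lower bound on $\|v_n\|_{L^{p+1}}^{p+1} + \|v_n\|_{L^{q+1}}^{q+1}$ extracted from the Nehari identity and Sobolev embedding, while dichotomy is ruled out by a strict subadditivity inequality $m_c < m_c^\theta + m_c^{1-\theta}$. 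This strict subadditivity is the subtle point, since it requires a scaling comparison tailored to the double-power structure; the analogous estimate for $a = -1$ is proved in \cite{Kokubu2024}, and a parallel argument should transfer to $a = +1$ with only minor adjustments.

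Having extracted, after translation, a minimizer $\phi \in \mathcal{N}_c$ of definite sign, I would upgrade the sign to strict via the fixed-point identity
\[
\phi = (D_x^\sigma + c)^{-1} (a \phi^p + \phi^q),
\]
using the pointwise positivity of the convolution kernel of $(D_x^\sigma + c)^{-1}$ (its Fourier symbol $(|\xi|^\sigma + c)^{-1}$ is a positive, integrable, radially symmetric function on $\mathbb{R}$, and its inverse Fourier transform is strictly positive). A routine Lagrange-multiplier argument, combined with $\langle S_c'(\phi), \phi \rangle = 0$ and a nondegeneracy check excluding a spurious multiplier, shows that $\phi$ is a critical point of $S_c$ on $H^{\sigma/2}(\mathbb{R})$. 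Hence $\phi$ solves \eqref{eq:Intro0300;gkdv_st} and, by the Nehari characterization, is a ground state of the required sign.
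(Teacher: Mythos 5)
Your overall strategy---minimizing $S_c$ on the Nehari manifold, identifying minimizers with ground states by a Lagrange multiplier argument, reducing to competitors of definite sign, and upgrading to a strict sign via the positive kernel of $(D_x^{\sigma}+c)^{-1}$---is the same as the paper's. The substantive divergence is the compactness step. The paper does \emph{not} use Lions concentration--compactness with strict subadditivity: it applies a Lieb-type translation lemma to extract a nonzero weak limit $v_0$, and then combines the Brezis--Lieb splitting with the auxiliary functionals $I_c$, $J_c$ (positive linear combinations of $S_c$ and $K_c$) and the key implication ``$K_c(v)<0 \Rightarrow J_c(v)>d(c)$'' to force $K_c(v_0)=0$ and $J_c(v_0)=d(c)$ directly, so no subadditivity inequality is ever needed. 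Moreover, the inequality $m_c < m_c^{\theta} + m_c^{1-\theta}$ you invoke is the template for $L^2$-constrained (mass) minimization; for Nehari-constrained minimization of a non-homogeneous double-power action it is not the relevant statement, and a correct dichotomy-excluding argument here would essentially reproduce the Brezis--Lieb computation anyway. As written, this step of your plan does not close, although the route could be repaired.

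The one step that genuinely fails as stated is strict positivity in case (II-1). There $a=-1$ and $p$ is odd, so for a nonnegative solution $\phi$ the right-hand side of your fixed-point identity is $-\phi^{p}+\phi^{q}=\phi^{p}(\phi^{q-p}-1)$, which is negative wherever $0<\phi<1$; since $\phi$ vanishes at infinity this region is nonempty, and positivity of the kernel then yields nothing. The paper's fix is to add $\lambda\phi$ to both sides with $\lambda>\sup_{x}\bigl(a\phi^{p-1}+\phi^{q-1}\bigr)$, obtaining $\phi=N^{\sigma}_{c+\lambda}\ast\bigl[(\lambda-a\phi^{p-1}-\phi^{q-1})\phi\bigr]$ with a bracket bounded below by $\phi$, after which a single zero of $\phi$ forces $\phi\equiv 0$. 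You need this (or an equivalent) modification; cases (I) and (II-2)-after-reflection are unaffected because there the nonlinearity applied to a nonnegative function is already nonnegative. The remaining ingredients of your plan (unique fibering projection onto $\mathcal{N}_c$, the $F(|s|)\geq F(s)$ reduction with the fractional P\'olya--Szeg\H{o} inequality, the reflection $v\mapsto -w$ reducing (II-2) to a positive problem) are consistent with the paper, which implements the sign reduction with the symmetric decreasing rearrangement and the $I_c/J_c$ inequality instead of rescaling $|v|$ back onto the Nehari manifold.
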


\begin{rem}
    \begin{enumerate}[label=\arabic*.]
        \item Each ground state solution obtained in Theorem \ref{thm:existence_of_GS;gkdv_st} can be taken as an even function, and be decreasing in $|x|$ if positive or increasing in $|x|$ if negative.
        Moreover, since $p$ and $q$ are positive integers, we can see that any ground state solution to \eqref{eq:Intro0300;gkdv_st}, which is found in $H^{\sigma/2}(\mathbb{R})$, belongs to $H^{\infty}(\mathbb{R})$.
        We will prove this in Section \ref{section:existence_of_GS;gkdv_st}.

        \item In Theorem \ref{thm:existence_of_GS;gkdv_st}, we only consider cases that we can find a ground state solution, while there are other cases not appearing in the statement where we can find a nontrivial solution which is (possibly) not ground state solutions.
        In addition, when we consider case (II-2), we can show that there exists a positive solution to \eqref{eq:Intro0400;gkdv_st} and none of them are ground state solutions.
        For details, see the author's previous paper \cite{Kokubu2024}.
    \end{enumerate}
    
\end{rem}

Hereafter, whenever we take a ground state solution to \eqref{eq:Intro0300;gkdv_st} or any stationary problem appearing below, we always consider even one.

Now we state the stability result of travelling wave solutions to \eqref{eq:Intro0200;gkdv_st} constructed with ground state solutions to \eqref{eq:Intro0300;gkdv_st} obtained in Theorem \ref{thm:existence_of_GS;gkdv_st}.

\begin{thm} \label{thm:stability;gkdv_st}
    Let $1 \leq \sigma \leq 2$, $p, q \in \mathbb{N}$, and $2 \leq p < q < \infty$.
    Then the following properties hold:
    \begin{enumerate}[label={\rm (\Roman*)} \ ]
        \item The case $a = +1$.

        Assume that $q$ is odd, and $p < 2 \sigma + 1$.
        Let $\phi_{c}$ be a positive ground state to \eqref{eq:Intro0300;gkdv_st} for $c > 0$.
        Then there exists $c_{0} \in (0,\infty)$ such that a travelling wave solution $\phi_{c}(x-ct)$ to \eqref{eq:Intro0200;gkdv_st} is stable for all $c \in (0, c_{0})$.

        \item The case $a = -1$.

        \begin{enumerate}[label={\rm (\arabic*)} \ ]
            \item Assume that $p$ is odd and $q < 2 \sigma + 1$.
            Let $\phi_{c}$ be a positive ground state solution to \eqref{eq:Intro0300;gkdv_st} for $c > 0$.
            Then there exists $c_{1} \in (0, \infty)$ such that a travelling wave solution $\phi_{c}(x-ct)$ to \eqref{eq:Intro0200;gkdv_st} is stable for all $c \in (c_{1}, \infty)$.
            
            \item Assume that $p$ is even, $q$ is odd, and $q < 2 \sigma + 1$.
            Let $\phi_{c}$ be a negative ground state solution to \eqref{eq:Intro0300;gkdv_st} for $c > 0$.
            Then there exists $c_{2} \in (0, \infty)$ such that a travelling wave solution $\phi_{c}(x-ct)$ to \eqref{eq:Intro0200;gkdv_st} is stable for all $c \in (c_{2}, \infty)$.
        \end{enumerate}
    \end{enumerate}
\end{thm}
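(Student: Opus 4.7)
The strategy is to invoke the stability criterion of Bona--Souganidis--Strauss \cite{BSS1987}: the travelling wave $\phi_{c}(x-ct)$ is stable provided the linearized action $L_{c} \coloneq S_{c}''(\phi_{c})$ has exactly one simple negative eigenvalue, kernel spanned by $\partial_{x} \phi_{c}$, and spectrum otherwise bounded away from zero, together with the slope condition $d''(c) > 0$, where $d(c) \coloneq S_{c}(\phi_{c})$. A standard computation yields $d'(c) = M(\phi_{c}) = \tfrac{1}{2} \|\phi_{c}\|_{L^{2}}^{2}$, so the slope condition reduces to showing that $c \mapsto \|\phi_{c}\|_{L^{2}}^{2}$ is strictly increasing on the respective parameter interval.

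To analyze $\|\phi_{c}\|_{L^{2}}^{2}$ in each case, I would exploit the scaling of the dominant power. In Case (I) with $a=+1$ the term $u^{p}$ is dominant for small amplitudes, hence for small $c$; the ansatz $\phi_{c}(x) = c^{1/(p-1)} \tilde{\phi}_{c}(c^{1/\sigma} x)$ reduces \eqref{eq:Intro0300;gkdv_st} to
\begin{equation*}
    D_{x}^{\sigma} \tilde{\phi}_{c} + \tilde{\phi}_{c} - \tilde{\phi}_{c}^{p} - c^{(q-p)/(p-1)} \tilde{\phi}_{c}^{q} = 0,
\end{equation*}
whose leading equation as $c \to 0^{+}$ is the single-power stationary problem with exponent $p$, for which the positive even ground state $\Psi_{p}$ is unique and non-degenerate by Frank--Lenzmann~\cite{Frank-Lenzmann}. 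Cases (II-1) and (II-2) are analogous with the higher power dominant in the large-$c$ regime: the rescaling $\phi_{c}(x) = \pm c^{1/(q-1)} \tilde{\phi}_{c}(c^{1/\sigma} x)$ (sign according to the parity subcase) now produces the small parameter $c^{(p-q)/(q-1)} \to 0$ as $c \to +\infty$ and the single-power limit equation with exponent $q$ and ground state $\Psi_{q}$.

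Once the limit equation is identified, I would prove that $\tilde{\phi}_{c} \to \Psi_{p}$ (respectively $\Psi_{q}$) in $H^{\sigma/2}(\mathbb{R})$ by a variational compactness argument, combined with uniqueness of the positive even single-power ground state to rule out splitting and vanishing. Non-degeneracy of the single-power linearized operator then permits upgrading this to a $C^{1}$-branch $c \mapsto \tilde{\phi}_{c}$ near the endpoint via the implicit function theorem. The explicit scaling gives $\|\phi_{c}\|_{L^{2}}^{2} = c^{\alpha} \|\tilde{\phi}_{c}\|_{L^{2}}^{2}$ with $\alpha = 2/(p-1) - 1/\sigma$ in Case (I) and $\alpha = 2/(q-1) - 1/\sigma$ in Case (II); the $L^{2}$-subcritical hypothesis $p < 2\sigma+1$ (respectively $q < 2\sigma+1$) is precisely what makes $\alpha > 0$. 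Differentiating yields
\begin{equation*}
    \frac{d}{dc} \|\phi_{c}\|_{L^{2}}^{2} = c^{\alpha - 1} \Bigl( \alpha \, \|\tilde{\phi}_{c}\|_{L^{2}}^{2} + c \, \frac{d}{dc}\|\tilde{\phi}_{c}\|_{L^{2}}^{2} \Bigr),
\end{equation*}
in which the parenthesized expression tends to $\alpha \|\Psi\|_{L^{2}}^{2} > 0$ in the appropriate limit; hence $d''(c) > 0$ on some $(0, c_{0})$, $(c_{1}, \infty)$ or $(c_{2}, \infty)$, as claimed.

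The delicate step will be verifying the BSS spectral hypothesis on $L_{c}$ uniformly in the degeneracy regime. At the single-power limit Frank--Lenzmann provides non-degeneracy: the limit operator has exactly one simple negative eigenvalue and one-dimensional kernel spanned by the derivative of the ground state. In rescaled variables $L_{c}$ is a relatively compact perturbation of this limit operator once $c$ is sufficiently close to the endpoint, so a spectral perturbation argument transfers both the negative-eigenvalue count and the kernel description to $L_{c}$ for the required range of $c$. Controlling the essential spectrum and ruling out the bifurcation of additional eigenvalues from zero (where the non-degeneracy of the limit is crucially used) is the main technical obstacle.
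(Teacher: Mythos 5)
Your strategy is viable, but it is genuinely different from the route taken in the paper. The paper does \emph{not} verify the Bona--Souganidis--Strauss package (spectral count $n(L_{c})=1$, kernel description, and $d''(c)>0$); indeed the introduction points out that the sign of $\partial_{c}^{2}d(c)$ is hard to access for double-power nonlinearities. Instead it invokes Theorem 3.5 of Grillakis--Shatah--Strauss through a single coercivity statement (statement (B) in Section \ref{section:stability;gkdv_st}): $\langle S_{c}''(\phi_{c})v,v\rangle\geq C_{1}\|v\|_{H^{\sigma/2}}^{2}$ for all $v$ with $(v,\phi_{c})_{L^{2}}=(v,\partial_{x}\phi_{c})_{L^{2}}=0$. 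This is proved by contradiction and compactness (Proposition \ref{prop:suff_cond_2;gkdv_st}), transporting to $\phi_{c}$ the corresponding coercivity for the single-power limit problem (Lemma \ref{lem:coercivity_single;gkdv_st}), which is itself obtained by an abstract variational argument (Lemma \ref{lem:suffcond2_abstract_statement;gkdv_st}) rather than by eigenvalue counting. The shared core of the two approaches is identical: the rescalings \eqref{eq:coercivity0100;gkdv_st} and \eqref{eq:coercivity0300;gkdv_st}, the strong $H^{\sigma/2}$ convergence of the rescaled ground states to the single-power ground state (Lemma \ref{lem:convergence;gkdv_st}, your ``variational compactness argument''), and Frank--Lenzmann uniqueness/nondegeneracy. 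What your route buys is a conceptually familiar criterion and an explicit asymptotic formula for $d''(c)$; what it costs is two extra technical layers the paper avoids: (i) a genuine $C^{1}$ branch $c\mapsto\phi_{c}$ (via the implicit function theorem in the even subspace, plus an argument identifying the variational ground state with that branch) so that $d''(c)$ exists and $d'(c)=M(\phi_{c})$ is legitimate, and (ii) the perturbative transfer of the Morse index and kernel from the limit operator to $L_{c}$, where translation invariance must be used to pin the zero eigenvalue. One point you should make explicit in the slope computation: the claim that $c\,\tfrac{d}{dc}\|\tilde{\phi}_{c}\|_{L^{2}}^{2}\to 0$ is not automatic from boundedness of the branch; it follows because $\tilde{\phi}_{c}$ depends on $c$ only through the small parameter $\varepsilon=c^{(q-p)/(p-1)}$ (resp.\ $c^{-(q-p)/(q-1)}$), so the chain rule produces an extra positive power of $\varepsilon$ multiplying a bounded $\varepsilon$-derivative. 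With that supplied, and with the $C^{1}$ branch in hand, your argument closes; the subcritical hypotheses $p<2\sigma+1$ and $q<2\sigma+1$ enter exactly where you place them, as the positivity of the scaling exponent of $\|\phi_{c}\|_{L^{2}}^{2}$.
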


\subsection*{Plan for the paper}
We prove Theorem \ref{thm:existence_of_GS;gkdv_st} in Section \ref{section:existence_of_GS;gkdv_st}.
In Section \ref{section:stability;gkdv_st}, we consider sufficient conditions for the stability of travelling wave solutions.
In section \ref{section:suf_cond_2;gkdv_st}, we prove that the sufficient conditions given in Section \ref{section:stability;gkdv_st} hold true.

\subsection*{Notation}
For $s > 0$ and $u, v \in H^{s}(\mathbb{R})$, we define an inner product as
\begin{equation}
    (u, v)_{H^{s}} \coloneq (D_{x}^{s}u, D_{x}^{s}v)_{L^{2}} + (u, v)_{L^{2}}.
\end{equation}
Let $\| \cdot \|_{H^{s}}$ denote the norm of $H^{s}(\mathbb{R})$ naturally defined by the inner product.
Additionally, we let $\langle f, v \rangle$ denote a dual product between $H^{-\sigma/2}(\mathbb{R})$ and $H^{\sigma/2}(\mathbb{R})$.

To simplify the notation, we often use the same letter for constants in different estimates such as $A, B$, and $C$.
Any subsequence appearing below will be denoted by original characters.


\section{Ground state solutions} \label{section:existence_of_GS;gkdv_st}
As mentioned in Section \ref{section:Introduction;gkdv_st}, the existence of ground state solutions to \eqref{eq:Intro0300;gkdv_st} for case (II-$j$) ($j = 1, 2$) has been studied by the author.
In this section, we will complete the proof of Theorem \ref{thm:existence_of_GS;gkdv_st}.

\subsection{Existence} \label{subsection:GS_existence;gkdv_st}
Here we observe the existence of ground state solutions to \eqref{eq:Intro0300;gkdv_st}.
In this subsection, we always assume one of the three conditions of (I), (II-1), or (II-2) in Theorem \ref{thm:existence_of_GS;gkdv_st}, and assume that $c > 0$.

We let $\mathcal{G}_{c}$ denote the set of ground state solutions to \eqref{eq:Intro0300;gkdv_st}.
The action functional $S_{c}$ corresponding to \eqref{eq:Intro0300;gkdv_st} is written as
\begin{equation}
    S_{c}(v) = \frac{1}{2} \| v \|_{H^{\sigma/2}_{c}}^{2} - \frac{a}{p+1} \int_{\mathbb{R}} v^{p+1} \, dx - \frac{1}{q+1} \int_{\mathbb{R}} v^{q+1} \, dx
\end{equation}
for $v \in H^{\sigma/2}(\mathbb{R})$, where $\| v \|_{H^{\sigma/2}_{c}}^{2} \coloneq \| D_{x}^{\sigma/2} v \|_{L^{2}}^{2} + c \| v \|_{L^{2}}^{2}$.
We define the Nehari functional $K_{c}$ derived from the action functional $S_{c}$ as
\begin{equation}
    K_{c}(v) \coloneq \langle S_{c}'(v), v \rangle = \| v \|_{H^{\sigma/2}_{c}}^{2} - a \int_{\mathbb{R}} v^{p+1} \, dx - \int_{\mathbb{R}} v^{q+1} \, dx.
\end{equation}
We remark that $K_{c}(v) = 0$ holds for any nontrivial solution to \eqref{eq:Intro0300;gkdv_st}.
Moreover, we set
\begin{align}
    & d(c) \coloneq \inf \{ S_{c}(v): v \in H^{\sigma/2}(\mathbb{R}) \setminus \{0\}, \ K_{c}(v)= 0 \}, \\
    & \mathcal{M}_{c} \coloneq \{ v \in H^{\sigma/2}(\mathbb{R}) \setminus \{0\}: K_{c}(v) = 0, \ S_{c}(v) = d(c) \}.
\end{align}

\begin{lem} \label{lem:existence_gs_GM;gkdv_st}
    If $\mathcal{M}_{c} \neq \emptyset$, then $\mathcal{G}_{c} = \mathcal{M}_{c}$ holds.
\end{lem}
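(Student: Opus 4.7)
The claim is a double inclusion $\mathcal{M}_c \subset \mathcal{G}_c$ and $\mathcal{G}_c \subset \mathcal{M}_c$. I would prove the first substantively via a Lagrange-multiplier argument, and then deduce the second as a short comparison.

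For $\mathcal{M}_c \subset \mathcal{G}_c$, fix $\phi \in \mathcal{M}_c$. The plan is to show $\phi$ is actually a critical point of $S_c$, i.e., a solution of \eqref{eq:Intro0300;gkdv_st}, after which the ground state property is immediate. Apply the Lagrange multiplier principle: provided $K_c'(\phi) \neq 0$, there exists $\mu \in \mathbb{R}$ with $S_c'(\phi) = \mu K_c'(\phi)$. Testing against $\phi$ gives
\begin{equation}
0 = K_c(\phi) = \langle S_c'(\phi), \phi\rangle = \mu \langle K_c'(\phi), \phi\rangle,
\end{equation}
so the main task reduces to the non-degeneracy $\langle K_c'(\phi), \phi\rangle \neq 0$, which simultaneously validates Lagrange and forces $\mu = 0$.

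This non-degeneracy is the principal obstacle. Writing $\alpha = \|\phi\|_{H^{\sigma/2}_c}^2$, $\beta = \int_{\mathbb{R}}\phi^{p+1}\,dx$, $\gamma = \int_{\mathbb{R}}\phi^{q+1}\,dx$, the constraint $K_c(\phi) = 0$ reads $\alpha = a\beta + \gamma$, and a direct calculation gives
\begin{equation}
\langle K_c'(\phi), \phi\rangle = 2\alpha - a(p+1)\beta - (q+1)\gamma = -a(p-1)\beta - (q-1)\gamma.
\end{equation}
Argue by contradiction: if this vanished, combining with $\alpha = a\beta + \gamma$ would give $\alpha = \frac{p-q}{p-1}\gamma$ and $a\beta = -\frac{q-1}{p-1}\gamma$; since $\alpha > 0$ and $p < q$, this forces $\gamma < 0$ and $a\beta > 0$. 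The parity hypotheses of Theorem \ref{thm:existence_of_GS;gkdv_st} then yield a contradiction: in cases (I) and (II-2), $q$ is odd, so $\phi^{q+1} \geq 0$ and $\gamma > 0$; in case (II-1), $p$ is odd with $a = -1$, so $\phi^{p+1} \geq 0$ and $a\beta = -\beta \leq 0$. Hence $\mu = 0$ and $\phi$ solves \eqref{eq:Intro0300;gkdv_st}. Since every nontrivial solution $v$ satisfies $K_c(v) = 0$, the definition of $d(c)$ immediately gives $d(c) = S_c(\phi) \leq S_c(v)$, so $\phi \in \mathcal{G}_c$.

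For $\mathcal{G}_c \subset \mathcal{M}_c$, take $\phi \in \mathcal{G}_c$. As a nontrivial solution $\phi$ satisfies $K_c(\phi) = 0$, so $S_c(\phi) \geq d(c)$ by the definition of $d(c)$. Picking any $\psi \in \mathcal{M}_c$ (nonempty by hypothesis) and applying the inclusion just proved, $\psi \in \mathcal{G}_c$; by the definition of a ground state, $S_c(\phi) \leq S_c(\psi) = d(c)$. Combining, $S_c(\phi) = d(c)$ and $\phi \in \mathcal{M}_c$.
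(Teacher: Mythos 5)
Your proposal is correct and follows essentially the same route as the paper: a Lagrange-multiplier argument for $\mathcal{M}_c \subset \mathcal{G}_c$ hinging on the non-degeneracy $\langle K_c'(\phi),\phi\rangle \neq 0$, which rests on the same parity facts (positivity of $\int_{\mathbb{R}}\phi^{q+1}\,dx$ in cases (I), (II-2) and of $\int_{\mathbb{R}}\phi^{p+1}\,dx$ in case (II-1)), followed by the identical two-line comparison for $\mathcal{G}_c \subset \mathcal{M}_c$. The only cosmetic difference is that you establish the non-degeneracy by an explicit algebraic contradiction where the paper argues $\langle K_c'(\phi),\phi\rangle < 0$ from the shape of $\lambda \mapsto K_c(\lambda\phi)$; your version is, if anything, more self-contained.
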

\begin{proof}
    First, we show that $\mathcal{M}_{c} \subset \mathcal{G}_{c}$.
    Let $\phi \in \mathcal{M}_{c}$.
    Then we see that $K_{c}(\phi) = 0$.
    Now we consider the following function:
    \begin{equation}
        (0, \infty) \ni \lambda \mapsto K_{c}(\lambda \phi) = \lambda^{2} \| v \|_{H^{\sigma/2}_{c}}^{2} - a \lambda^{p+1} \int_{\mathbb{R}} \phi^{p+1} \, dx - \lambda^{q+1} \int_{\mathbb{R}} \phi^{q+1} \, dx. \label{eq:GS0100;gkdv_st}
    \end{equation}
    We show that
    \begin{equation}
        \left. \partial_{\lambda} K_{c}(\lambda \phi) \right|_{c = 1} = \langle K_{c}'(\phi), \phi \rangle < 0. \label{eq:GS0200_gkdv_st}
    \end{equation}
    If $q$ is odd, it is clear that $\int_{\mathbb{R}} \phi^{q+1} \, dx > 0$.
    Moreover, if $a = -1$ and $p$ is odd, we obtain it from $K_{c}(\phi) = 0$ that
    \begin{equation}
        0 < \| \phi \|_{H^{\sigma/2}_{c}}^{2} + \int_{\mathbb{R}} \phi^{p+1} \, dx = \int_\mathbb{R} \phi^{q+1} \, dx. \label{eq:GS0210;gkdv_st}
    \end{equation}
    Considering the graph of the function \eqref{eq:GS0100;gkdv_st}, we can conclude \eqref{eq:GS0200_gkdv_st}.
    Therefore, by the Lagrange multiplier theorem, there exists $\mu \in \mathbb{R}$ such that $S_{c}'(\phi) = \mu K_{c}'(\phi)$.
    Then we have
    \begin{equation}
        \mu \langle K_{c}'(\phi), \phi \rangle = \langle S_{c}'(\phi), \phi \rangle = K_{c}(\phi) = 0,
    \end{equation}
    which implies $\mu = 0$ and then $S_{c}'(\phi) = 0$.
    Furthermore, from the definition of $\mathcal{M}_{c}$, it holds that $S_{c}(\phi) = d(c) \leq S_{c}(v)$ for any nontrivial solution $v \in H^{\sigma/2}(\mathbb{R})$ to \eqref{eq:Intro0300;gkdv_st}, which means that $\phi \in \mathcal{G}_{c}$.

    Next, we show that $\mathcal{G}_{c} \subset \mathcal{M}_{c}$.
    Since $\mathcal{M}_{c} \neq \emptyset$, we can take some $v \in \mathcal{M}_{c}$.
    Here we let $\phi \in \mathcal{G}_{c}$.
    Then, since $v \in \mathcal{G}_{c}$ as shown above, we see that $S_{c}(\phi) \leq S_{c}(v) = d(c)$.
    By $K_{c}(\phi) = 0$, we have $d(c) \leq S_{c}(\phi)$.
    Therefore, we obtain $S_{c}(\phi) = d(c)$, which means that $\phi \in \mathcal{M}_{c}$.

    Hence, the proof is completed.
\end{proof}

Thanks to Lemma \ref{lem:existence_gs_GM;gkdv_st}, in order to prove the existence of ground state solutions to \eqref{eq:Intro0300;gkdv_st}, we shall show that $\mathcal{M}_{c} \neq \emptyset$.
We prove this in what follows.

Here we introduce some auxiliary functionals.
For $v \in H^{\sigma/2}(\mathbb{R})$, we put
\begin{align}
    I_{c}(v) &= S_{c}(v) - \frac{1}{q+1} K_{c}(v) \\
    &= \left( \frac{1}{2} - \frac{1}{q+1} \right) \| v \|_{H^{\sigma/2}_{c}}^{2} - a \left( \frac{1}{p+1} - \frac{1}{q+1} \right) \int_{\mathbb{R}} v^{p+1} \, dx, \label{eq:GS0220;gkdv_st} \\
    J_{c}(v) &= S_{c}(v) - \frac{1}{p+1} K_{c}(v) \\    
    &= \left( \frac{1}{2} - \frac{1}{p+1} \right) \| v \|_{H^{\sigma/2}_{c}}^{2} + \left( \frac{1}{p+1} - \frac{1}{q+1} \right) \int_{\mathbb{R}} v^{q+1} \, dx. \label{eq:GS0230;gkdv_st}
\end{align}
We can easily see that
\begin{align}
    d(c) &= \inf \{ S_{c}(v): v \in H^{\sigma/2}(\mathbb{R}) \setminus \{0\}, \ K_{c}(v) = 0 \} \\
    &= \inf \{ I_{c}(v): v \in H^{\sigma/2}(\mathbb{R}) \setminus \{0\}, \ K_{c}(v) = 0 \} \label{eq:GS0300;gkdv_st} \\
    &= \inf \{ J_{c}(v): v \in H^{\sigma/2}(\mathbb{R}) \setminus \{0\}, \ K_{c}(v) = 0 \}. \label{eq:GS0400;gkdv_st}
\end{align}

\begin{lem} \label{lem:GS_ineq_IJ;gkdv_st}
    \begin{enumerate}[label={\rm (\roman*)} \ ]
        \item Assume condition {\rm (I)} or {\rm (II-2)} in Theorem \ref{thm:existence_of_GS;gkdv_st}.
        Then $J_{c}(v) > d(c)$ holds for all $v \in H^{\sigma/2}(\mathbb{R})$ satisfying $K_{c}(v) < 0$.
        \item Assume condition {\rm (II-1)} in Theorem \ref{thm:existence_of_GS;gkdv_st}.
        Then $I_{c}(v) > d(c)$ holds for all $v \in H^{\sigma/2}(\mathbb{R})$ satisfying $K_{c}(v) < 0$.
    \end{enumerate}
\end{lem}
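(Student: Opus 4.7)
The plan is a scaling argument along the ray $\{\lambda v : \lambda > 0\}$. Given $v \in H^{\sigma/2}(\mathbb{R})$ with $K_{c}(v) < 0$, I will produce $\lambda^{*} \in (0,1)$ with $K_{c}(\lambda^{*} v) = 0$, so that $\lambda^{*} v$ is admissible in the infimum defining $d(c)$, and then exploit strict monotonicity of $\lambda \mapsto J_{c}(\lambda v)$ (resp.\ $\lambda \mapsto I_{c}(\lambda v)$) on $(0,\infty)$ to squeeze $d(c)$ strictly below $J_{c}(v)$ (resp.\ $I_{c}(v)$).

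For the first step, consider $g(\lambda) := K_{c}(\lambda v) = \lambda^{2} \|v\|_{H^{\sigma/2}_{c}}^{2} - a \lambda^{p+1} \int_{\mathbb{R}} v^{p+1} \, dx - \lambda^{q+1} \int_{\mathbb{R}} v^{q+1} \, dx$. Since $p+1, q+1 \ge 3$, the quadratic term dominates near $0$, so $g(\lambda) > 0$ for sufficiently small $\lambda > 0$; together with $g(1) = K_{c}(v) < 0$ and continuity, the intermediate value theorem yields $\lambda^{*} \in (0,1)$ with $K_{c}(\lambda^{*} v) = 0$. Since $\lambda^{*} v \neq 0$, it follows from \eqref{eq:GS0300;gkdv_st}--\eqref{eq:GS0400;gkdv_st} that $d(c) \le J_{c}(\lambda^{*} v)$ and $d(c) \le I_{c}(\lambda^{*} v)$.

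For the second step, note that under (I) or (II-2), $q$ is odd so $\int_{\mathbb{R}} v^{q+1} \, dx > 0$ for $v \neq 0$; then expression \eqref{eq:GS0230;gkdv_st}, with positive coefficients $\tfrac{1}{2} - \tfrac{1}{p+1}$ (from $p \ge 2$) and $\tfrac{1}{p+1} - \tfrac{1}{q+1}$ (from $p < q$), shows $\lambda \mapsto J_{c}(\lambda v)$ is strictly increasing on $(0,\infty)$. Under (II-1), $a = -1$ and $p$ odd yield $-a \int_{\mathbb{R}} v^{p+1} \, dx = \int_{\mathbb{R}} v^{p+1} \, dx \ge 0$, while $\tfrac{1}{2} - \tfrac{1}{q+1} > 0$ makes the $\lambda^{2}$-coefficient in \eqref{eq:GS0220;gkdv_st} strictly positive, so $\lambda \mapsto I_{c}(\lambda v)$ is also strictly increasing. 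Since $\lambda^{*} \in (0,1)$, we conclude $d(c) \le J_{c}(\lambda^{*} v) < J_{c}(v)$ (resp.\ $d(c) \le I_{c}(\lambda^{*} v) < I_{c}(v)$).

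The only real subtlety, and the reason the statement splits by case, is the pairing between the parity hypothesis and the choice of functional: the subtraction in $J_{c} = S_{c} - \frac{1}{p+1} K_{c}$ kills the $(p+1)$-power term (sign-indefinite in cases (I), (II-2)), while $I_{c} = S_{c} - \frac{1}{q+1} K_{c}$ kills the $(q+1)$-power term (sign-indefinite in (II-1) when $q$ is even). Once the correct functional is selected, the argument is routine and reduces to the monotonicity computation above.
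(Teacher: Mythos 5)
Your proof is correct and follows essentially the same route as the paper's: locate $\lambda^{*}\in(0,1)$ with $K_{c}(\lambda^{*}v)=0$ on the ray through $v$, then use the strict monotonicity of $\lambda\mapsto J_{c}(\lambda v)$ (resp.\ $I_{c}(\lambda v)$), which holds because the parity hypotheses make the surviving power term nonnegative, to get $d(c)\leq J_{c}(\lambda^{*}v)<J_{c}(v)$. Your explicit intermediate-value argument and the observation that parity alone gives $\int_{\mathbb{R}}v^{p+1}\,dx\geq 0$ in case (II-1) are just slightly more detailed versions of what the paper sketches.
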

\begin{proof}
    (i) \ Let $v \in H^{\sigma/2}(\mathbb{R})$ satisfying $K_{c}(v) < 0$.
    We note that $v \neq 0$ and that $\int_{\mathbb{R}} v^{q+1} \, dx > 0$ as $q$ is odd.
    Then, considering the graph of the function $(0,\infty) \ni \lambda \mapsto J_{c}(\lambda v)$, we find some $\lambda_{0} \in (0, 1)$ such that $K_{c}(\lambda_{0} v) = 0$.
    By \eqref{eq:GS0400;gkdv_st}, we obtain $d(c) \leq J_{c}(\lambda_{0} v) < J_{c} (v)$.

    (ii) \ In this case, the functional $I_{c}$ is written as
    \begin{equation}
        I_{c}(v) = \left( \frac{1}{2} - \frac{1}{q+1} \right) \| v \|_{H^{\sigma/2}_{c}}^{2} + \left( \frac{1}{p+1} - \frac{1}{q+1} \right) \int_{\mathbb{R}} v^{p+1} \, dx.
    \end{equation}
    Furthermore, similarly to \eqref{eq:GS0210;gkdv_st}, we can see that $\int_{\mathbb{R}} v^{p+1} \, dx > 0$ holds for all $v \in H^{\sigma/2}(\mathbb{R})$ satisfying $K_{c}(v) < 0$.
    Then we can prove the statement with the almost the same way as (i).
\end{proof}

\begin{lem} \label{lem:GS_positivity_d;gkdv_st}
    It holds that $d(c) > 0$.
\end{lem}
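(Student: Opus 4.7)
The plan is to establish a uniform positive lower bound on $\|v\|_{H^{\sigma/2}_c}$ for any $v \in H^{\sigma/2}(\mathbb{R}) \setminus \{0\}$ with $K_c(v) = 0$, and then to exploit the positivity of the relevant nonlinear integral (selected according to the parity case) to conclude that $S_c(v)$ is bounded below by a positive constant.

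First I would fix $v \in H^{\sigma/2}(\mathbb{R}) \setminus \{0\}$ satisfying $K_c(v) = 0$. Using the Gagliardo--Nirenberg / Sobolev embedding $H^{\sigma/2}(\mathbb{R}) \hookrightarrow L^{r}(\mathbb{R})$ for every integer $r \geq 2$ (valid because $\sigma \geq 1$), there is a constant $C = C(c, p, q, \sigma) > 0$ such that
\begin{equation}
    \int_{\mathbb{R}} |v|^{p+1}\, dx + \int_{\mathbb{R}} |v|^{q+1}\, dx \leq C \bigl( \|v\|_{H^{\sigma/2}_{c}}^{p+1} + \|v\|_{H^{\sigma/2}_{c}}^{q+1} \bigr).
\end{equation}
From $K_c(v) = 0$ and the triangle inequality for signed integrals,
\begin{equation}
    \|v\|_{H^{\sigma/2}_c}^2 = a \int_{\mathbb{R}} v^{p+1}\, dx + \int_{\mathbb{R}} v^{q+1}\, dx \leq \int_{\mathbb{R}} |v|^{p+1}\, dx + \int_{\mathbb{R}} |v|^{q+1}\, dx.
\end{equation}
Dividing by $\|v\|_{H^{\sigma/2}_c}^2 > 0$ and setting $\rho := \|v\|_{H^{\sigma/2}_c}$, one obtains $1 \leq C \rho^{p-1} + C \rho^{q-1}$, which (since $p-1, q-1 \geq 1$) forces $\rho \geq \rho_0$ for some constant $\rho_0 > 0$ depending only on $p, q, \sigma, c$.

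Next I would use the representations \eqref{eq:GS0220;gkdv_st} and \eqref{eq:GS0230;gkdv_st}. Since $K_c(v) = 0$, $S_c(v) = I_c(v) = J_c(v)$.
In the cases (I) and (II-2) where $q$ is odd, the identity
\begin{equation}
    S_c(v) = J_c(v) = \left( \frac{1}{2} - \frac{1}{p+1}\right) \|v\|_{H^{\sigma/2}_c}^2 + \left( \frac{1}{p+1} - \frac{1}{q+1} \right) \int_{\mathbb{R}} v^{q+1}\, dx
\end{equation}
has both coefficients positive and $\int_{\mathbb{R}} v^{q+1}\, dx \geq 0$, so $S_c(v) \geq \bigl(\tfrac{1}{2} - \tfrac{1}{p+1}\bigr)\rho_0^2 > 0$. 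In case (II-1), where $a = -1$ and $p$ is odd, I would use instead
\begin{equation}
    S_c(v) = I_c(v) = \left( \frac{1}{2} - \frac{1}{q+1}\right) \|v\|_{H^{\sigma/2}_c}^2 + \left( \frac{1}{p+1} - \frac{1}{q+1} \right) \int_{\mathbb{R}} v^{p+1}\, dx,
\end{equation}
which again is a sum of two non-negative terms (the second because $p+1$ is even), yielding $S_c(v) \geq \bigl(\tfrac{1}{2} - \tfrac{1}{q+1}\bigr)\rho_0^2 > 0$. Taking the infimum over admissible $v$ gives $d(c) \geq c_{*} > 0$ in each case.

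The main obstacle is keeping the parity bookkeeping straight: the reason the argument splits along (I)/(II-1)/(II-2) is precisely that we need some nonlinear integral in the $I_c$ or $J_c$ decomposition to be manifestly non-negative. The Gagliardo--Nirenberg lower bound on $\rho$ is essentially routine, while the sign analysis in the final step is where the parity assumptions of Theorem \ref{thm:existence_of_GS;gkdv_st} become essential.
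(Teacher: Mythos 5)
Your proposal is correct and follows essentially the same route as the paper: a Sobolev-embedding argument combined with $K_{c}(v)=0$ to obtain a uniform lower bound on $\| v \|_{H^{\sigma/2}_{c}}$, followed by the $J_{c}$ (cases (I), (II-2)) or $I_{c}$ (case (II-1)) decomposition in which the parity assumptions make the nonlinear term non-negative. The only cosmetic difference is that the paper extracts the lower bound $\rho_{0}$ via an explicit case split on $\| v \|_{H^{\sigma/2}_{c}} \gtrless 1$, which your monotonicity remark replaces equivalently.
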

\begin{proof}
    We shall show that there exists some $C > 0$ such that $I_{c}(v) \geq C$ or $J_{c}(v) \geq C$ hold for all $v \in H^{\sigma/2}(\mathbb{R}) \setminus \{0\}$ satisfying $K_{c}(v) = 0$.

    Here we let $v \in H^{\sigma/2}(\mathbb{R}) \setminus \{0\}$ satisfy $K_{c}(v) = 0$.
    By the Sobolev embedding, we see that
    \begin{align}
        0 = K_{c}(v) &= \| v \|_{H^{\sigma/2}_{c}}^{2} - a \int_{\mathbb{R}} v^{p+1} \, dx - \int_{\mathbb{R}} v^{q+1} \, dx \\
        &\geq \| v \|_{H^{\sigma/2}_{c}}^{2} \left\{ 1 - C \left( \| v \|_{H^{\sigma/2}_{c}}^{p-1} + \| v \|_{H^{\sigma/2}_{c}}^{q-1} \right) \right\},
    \end{align}
    with some constant $C > 0$.
    Then we see that $1 \leq C (\| v \|_{H^{\sigma/2}_{c}}^{p-1} + \| v \|_{H^{\sigma/2}_{c}}^{q-1})$.
    Noting that $p < q$, we have
    \begin{equation}
        1 \leq \left\{
            \begin{aligned}
                C_{1} \| v \|_{H^{\sigma/2}_{c}}^{p-1}, & \quad \text{if} \ \| v \|_{H^{\sigma/2}_{c}} \geq 1, \\
                C_{2} \| v \|_{H^{\sigma/2}_{c}}^{q-1}, & \quad \text{if} \ \| v \|_{H^{\sigma/2}_{c}} \leq 1.
            \end{aligned}
        \right.
    \end{equation}
    Here we set $C_{0} \coloneq \min \{ C_{1}^{-2/(p-1)}, \ C_{2}^{-2/(q-1)} \}$ so that we obtain $\| v \|_{H^{\sigma/2}_{c}}^{2} \geq C_{0}$ for all $v \in H^{\sigma/2}(\mathbb{R})$ satisfying $K_{c}(v) = 0$.

    In cases (I) and (II-2) in Theorem \ref{thm:existence_of_GS;gkdv_st}, noting that $\int_{\mathbb{R}} v^{q+1} \, dx \geq 0$, we have
    \begin{align}
        J_{c}(v) &= \left( \frac{1}{2} - \frac{1}{p+1} \right) \| v \|_{H^{\sigma/2}_{c}}^{2} + \left( \frac{1}{p+1} - \frac{1}{q+1} \right) \int_{\mathbb{R}} v^{q+1} \, dx \\
        &\geq \left( \frac{1}{2} - \frac{1}{p+1} \right) \| v \|_{H^{\sigma/2}_{c}}^{2} \geq \left( \frac{1}{2} - \frac{1}{p+1} \right) C_{0},
    \end{align}
    for all $v \in H^{\sigma/2}(\mathbb{R})$ satisfying $K_{c}(v) = 0$.

    We can see that it holds under the condition (II-1) in Theorem \ref{thm:existence_of_GS;gkdv_st} with a similar discussion that
    \begin{equation}
        I_{c}(v) \geq \left( \frac{1}{2} - \frac{1}{q+1} \right) C_{0}
    \end{equation}
    holds for all $v \in H^{\sigma/2}(\mathbb{R})$ satisfying $K_{c}(v) = 0$.
\end{proof}

To prove that $\mathcal{M}_{c} \neq \emptyset$, we need two more lemmas.

\begin{lem} \label{lem:GS_Lieb;gkdv_st}
    Let $(v_{n})_{n}$ be a bounded sequence in $H^{\sigma/2}(\mathbb{R})$.
    Assume that there exists some $r \in (2, \infty)$ such that $\inf_{n \in \mathbb{N}} \| v_{n} \|_{L^{r}} > 0$.
    Then there exists $(z_{n})_{n} \subset \mathbb{R}$ such that, up to a subsequence, there exists some $v_{0} \in H^{\sigma/2}(\mathbb{R}) \setminus \{0\}$ such that $u_{n}(\cdot + z_{n}) \rightharpoonup v_{0}$ weakly in $H^{\sigma/2}(\mathbb{R})$.
\end{lem}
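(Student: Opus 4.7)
The plan is to run a Lions-type concentration-compactness argument adapted to $H^{\sigma/2}(\mathbb{R})$. Since $\sigma \geq 1$, the Sobolev embedding gives $H^{\sigma/2}(\mathbb{R}) \hookrightarrow L^{s}(\mathbb{R})$ for every $s \in [2, \infty)$, and the local version is compact on any bounded interval. The strategy is to promote the lower bound on $\|v_{n}\|_{L^{r}}$ into a lower bound on the $L^{2}$-mass of $v_{n}$ on some unit interval, translate so that this mass sits at the origin, and then extract a weak limit that cannot vanish.

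The key intermediate step is a vanishing lemma: if $(w_{n}) \subset H^{\sigma/2}(\mathbb{R})$ is bounded and
\[
    \lim_{n \to \infty} \sup_{y \in \mathbb{R}} \int_{y-1}^{y+1} |w_{n}|^{2} \, dx = 0,
\]
then $\|w_{n}\|_{L^{s}(\mathbb{R})} \to 0$ for every $s \in (2, \infty)$. I would prove it by covering $\mathbb{R}$ with unit intervals of bounded overlap and combining the local fractional Sobolev embedding with H\"older's inequality to obtain an estimate of the shape
\[
    \int_{y-1}^{y+1} |w_{n}|^{s} \, dx \leq C \Bigl( \sup_{z \in \mathbb{R}} \int_{z-1}^{z+1} |w_{n}|^{2} \, dx \Bigr)^{(s-2)/2} \| w_{n} \|_{H^{\sigma/2}(y-1,y+1)}^{2},
\]
and then summing in $y$ along the cover against the uniform $H^{\sigma/2}$-bound on $(w_{n})$.

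Applying the contrapositive to $(v_{n})$ with the exponent $s = r$ provided by the hypothesis, I obtain $\delta > 0$ and a sequence $(z_{n}) \subset \mathbb{R}$ with $\int_{z_{n}-1}^{z_{n}+1} |v_{n}|^{2} \, dx \geq \delta$ for all sufficiently large $n$. Setting $\tilde v_{n}(x) \coloneq v_{n}(x + z_{n})$, the sequence stays bounded in $H^{\sigma/2}(\mathbb{R})$ because translation is an isometry, so along a subsequence $\tilde v_{n} \rightharpoonup v_{0}$ weakly in $H^{\sigma/2}(\mathbb{R})$ for some $v_{0} \in H^{\sigma/2}(\mathbb{R})$. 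The compact embedding $H^{\sigma/2}(-1,1) \hookrightarrow L^{2}(-1,1)$ upgrades this to strong convergence in $L^{2}(-1,1)$, whence $\int_{-1}^{1} |v_{0}|^{2} \, dx \geq \delta > 0$, so $v_{0} \not\equiv 0$ as required.

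The main obstacle is the vanishing lemma itself, since the nonlocality of $D_{x}^{\sigma/2}$ blocks a clean decomposition of $\|w_{n}\|_{H^{\sigma/2}(\mathbb{R})}$ into contributions from disjoint intervals. A practical route is to use the equivalent Gagliardo seminorm characterization of $H^{\sigma/2}$ on intervals, whose embedding constants into $L^{s}$ are translation-invariant, so that the covering argument closes exactly as in the classical case $\sigma = 2$; alternatively one may invoke an already established fractional version of Lions' lemma and only check that its hypotheses hold here.
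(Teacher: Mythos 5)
Your argument is correct, but it is not the route the paper takes: the paper does not prove this lemma in-text at all, instead invoking Lieb's translation--compactness lemma (adapted from $H^{1}$ to $H^{\sigma/2}$ in Appendix A of the author's earlier work \cite{Kokubu2024}). Lieb's method locates a good translation by working with the measure of the level sets $\{|v_{n}|>\varepsilon\}$ and a Sobolev inequality on unit cells, whereas you run the Lions concentration--compactness dichotomy through the vanishing lemma for the $L^{2}$-mass on unit intervals. Both yield the same conclusion; your version is arguably more modular (the vanishing lemma is reusable elsewhere), at the cost of having to justify the bounded-overlap summation $\sum_{y}\|w_{n}\|_{H^{\sigma/2}(y-1,y+1)}^{2}\leq C\|w_{n}\|_{H^{\sigma/2}(\mathbb{R})}^{2}$ for the nonlocal norm --- which you correctly identify as the delicate point and which does go through with the Gagliardo seminorm, since the squares $(y-1,y+1)^{2}$ have bounded overlap in $\mathbb{R}^{2}$. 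One small inaccuracy to flag: the displayed local estimate with exponents $(s-2)/2$ on the supremum and $2$ on the $H^{\sigma/2}$-norm comes from interpolating $L^{s}$ between $L^{2}$ and $L^{s'}$ with $s'=4/(4-s)$, so it is literally available only for $s\in(2,4)$. For the given $r\geq 4$ you need the standard extra step: first conclude $\|w_{n}\|_{L^{s_{0}}}\to 0$ for one $s_{0}\in(2,4)$, then interpolate $\|w_{n}\|_{L^{r}}\leq\|w_{n}\|_{L^{s_{0}}}^{1-\lambda}\|w_{n}\|_{L^{R}}^{\lambda}$ with $R>r$ and the uniform bound $\|w_{n}\|_{L^{R}}\leq C\|w_{n}\|_{H^{\sigma/2}}$. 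With that patch the proof is complete.
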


The similar statement for a sequence in $H^{1}(\mathbb{R})$ was proved by Lieb~\cite{Lieb}.
For the proof, see \cite[Appendix A]{Kokubu2024}.

\begin{lem}[Brezis--Lieb~\cite{Brezis-Lieb}] \label{eq:GS_Brezis-Lieb;gkdv_st}
    Let $r \in (1,\infty)$ and $(v_{n})_{n}$ be a bounded sequence in $L^{r}(\mathbb{R})$.
    Assume that $v_{n}(x) \rightarrow v(x)$ a.e.\ in $\mathbb{R}$ with some measurable function $v$.
    Then it holds that $v \in L^{r}(\mathbb{R})$ and that
    \begin{equation}
        \lim_{n \rightarrow \infty} \left( \int_{\mathbb{R}} |v_{n}|^{r} \, dx - \int_{\mathbb{R}} |v_{n} - v|^{r} \, dx \right) = \int_{\mathbb{R}} |v|^{r} \, dx.
    \end{equation}
\end{lem}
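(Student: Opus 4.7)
The plan is to apply Fatou's lemma to establish that $v \in L^{r}(\mathbb{R})$, and then reduce the main identity to showing that the integral of an auxiliary nonnegative quantity tends to zero by combining a pointwise inequality with the dominated convergence theorem.

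First, since $|v_{n}(x)|^{r} \to |v(x)|^{r}$ almost everywhere and $(v_{n})_{n}$ is bounded in $L^{r}(\mathbb{R})$, Fatou's lemma immediately gives $\int_{\mathbb{R}} |v|^{r} \, dx \leq \liminf_{n \to \infty} \int_{\mathbb{R}} |v_{n}|^{r} \, dx < \infty$, so $v \in L^{r}(\mathbb{R})$. Setting $w_{n} \coloneq v_{n} - v$, I then note that $w_{n}(x) \to 0$ a.e., and that $(w_{n})_{n}$ is bounded in $L^{r}(\mathbb{R})$ by the triangle inequality.

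The core ingredient is the elementary pointwise inequality: for every $\varepsilon > 0$ there exists $C_{\varepsilon} > 0$, depending only on $r$ and $\varepsilon$, such that
\begin{equation}
    \bigl| |a+b|^{r} - |a|^{r} - |b|^{r} \bigr| \leq \varepsilon |a|^{r} + C_{\varepsilon} |b|^{r}, \qquad a, b \in \mathbb{R}.
\end{equation}
This follows by applying the mean value theorem to $t \mapsto |t|^{r}$ together with Young's inequality applied to the cross term $|a|^{r-1}|b|$. Applying it with $a = w_{n}(x)$ and $b = v(x)$, I define
\begin{equation}
    f_{n}^{\varepsilon}(x) \coloneq \max\bigl( \, \bigl| |v_{n}|^{r} - |w_{n}|^{r} - |v|^{r} \bigr| - \varepsilon |w_{n}|^{r}, \, 0 \, \bigr),
\end{equation}
so that $0 \leq f_{n}^{\varepsilon}(x) \leq C_{\varepsilon} |v(x)|^{r} \in L^{1}(\mathbb{R})$ and $f_{n}^{\varepsilon}(x) \to 0$ a.e.

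By the dominated convergence theorem, $\int_{\mathbb{R}} f_{n}^{\varepsilon} \, dx \to 0$ as $n \to \infty$. Writing $\bigl| |v_{n}|^{r} - |w_{n}|^{r} - |v|^{r} \bigr| \leq f_{n}^{\varepsilon} + \varepsilon |w_{n}|^{r}$ and integrating, I obtain $\limsup_{n \to \infty} \int_{\mathbb{R}} \bigl| |v_{n}|^{r} - |w_{n}|^{r} - |v|^{r} \bigr| \, dx \leq \varepsilon M$, where $M \coloneq \sup_{n} \|w_{n}\|_{L^{r}}^{r} < \infty$. Letting $\varepsilon \to 0$ gives the desired identity. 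The only nontrivial step is the elementary inequality, but it is standard; the real insight is the truncation defining $f_{n}^{\varepsilon}$, which converts the $\varepsilon$-error term into something dominated by an integrable function and thus amenable to dominated convergence.
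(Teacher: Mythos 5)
Your proof is correct and is precisely the classical Brezis--Lieb argument (Fatou for $v\in L^{r}$, the elementary $\varepsilon$-inequality, the truncation $f_{n}^{\varepsilon}$, and dominated convergence); the paper itself gives no proof and simply cites Brezis--Lieb, so there is nothing to diverge from. Note that your argument actually yields the stronger conclusion $\int_{\mathbb{R}} \bigl| |v_{n}|^{r} - |v_{n}-v|^{r} - |v|^{r} \bigr| \, dx \to 0$, which implies the stated identity.
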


Now we prove that $\mathcal{M}_{c} \neq \emptyset$ holds.
\begin{proof}
    Let $(v_{n})_{n} \subset H^{\sigma/2}(\mathbb{R})$ satisfy $S_{c}(v_{n}) \rightarrow d(c)$ and $K_{c}(v) \rightarrow 0$.
    Then we have
    \begin{align}
        & I_{c}(v_{n}) \rightarrow d(c), \label{eq:GS0500;gkdv_st} \\
        & J_{c}(v_{n}) \rightarrow d(c), \label{eq:GS0510;gkdv_st} \\
        & a \left( \frac{1}{2} - \frac{1}{p+1} \right) \int_{\mathbb{R}} v_{n}^{p+1} \, dx + \left( \frac{1}{2} - \frac{1}{q+1} \right) \int_{\mathbb{R}} v_{n}^{q+1} \, dx = S_{c}(v_{n}) - \frac{1}{2} K_{c}(v_{n}) \rightarrow d(c). \label{eq:GS0520;gkdv_st}
    \end{align}
    We can see it from \eqref{eq:GS0500;gkdv_st} or \eqref{eq:GS0510;gkdv_st} that $(v_{n})_{n}$ is bounded in $H^{\sigma/2}(\mathbb{R})$.
    Moreover, by \eqref{eq:GS0520;gkdv_st}, we have $\inf_{n \in \mathbb{R}} \| v_{n} \|_{L^{p+1}} > 0$.
    Then, by Lemma \ref{lem:GS_Lieb;gkdv_st}, there exists $(z_{n})_{n} \subset \mathbb{R}$ and $v_{0} \in H^{\sigma/2}(\mathbb{R}) \setminus \{0\}$ such that $v_{n}(\cdot + z_{n}) \rightharpoonup v_{0}$ weakly in $H^{\sigma/2}(\mathbb{R})$, up to a subsequence.
    Here we put $w_{n} \coloneq v_{n}(\cdot + z_{n})$.
    Then we may assume that $w_{n}(x) \rightarrow v_{0}(x)$ a.e.\ in $\mathbb{R}$ due to the weak convergence and the Rellich compact embedding.
    Therefore, applying Lemma \ref{eq:GS_Brezis-Lieb;gkdv_st} to the sequence $(w_{n})_{n}$, we have
    \begin{align}
        & I_{c}(w_{n}) - I_{c}(w_{n} - v_{0}) \rightarrow I_{c}(v_{0}), \label{eq:GS0530;gkdv_st} \\
        & J_{c}(w_{n}) - J_{c}(w_{n} - v_{0}) \rightarrow J_{c}(v_{0}), \label{eq:GS0540;gkdv_st} \\
        & K_{c}(w_{n}) - K_{c}(w_{n} - v_{0}) \rightarrow K_{c}(v_{0}). \label{eq:GS0550;gkdv_st}
    \end{align}

    In cases (I) and (II-2) in Theorem \ref{thm:existence_of_GS;gkdv_st}, we obtain it from \eqref{eq:GS0510;gkdv_st} and \eqref{eq:GS0540;gkdv_st} that
    \begin{align}
        \lim_{n \rightarrow \infty} J_{c}(w_{n} - v_{0}) &= \lim_{n \rightarrow \infty} J_{c}(w_{n}) - J_{c}(v_{0}) \\
        &< \lim_{n \rightarrow \infty} J_{c}(w_{n}) = \lim_{n \rightarrow \infty} J_{c}(v_{n}) = d(c),
    \end{align}
    which implies that $J_{c}(w_{n} - v_{0}) \leq d(c)$ holds for $n \in \mathbb{N}$ sufficiently large.
    Then, by Lemma \ref{lem:GS_positivity_d;gkdv_st}, we have $K_{c}(w_{n} - v_{0}) \geq 0$ for large $n \in \mathbb{N}$.
    Moreover, by \eqref{eq:GS0550;gkdv_st}, we see that
    \begin{equation}
        K_{c}(v_{0}) = \lim_{n \rightarrow \infty} K_{c}(w_{n}) - \lim_{n \rightarrow \infty} K_{c}(w_{n} - v_{0}) \leq 0. \label{eq:GS0560;gkdv_st}
    \end{equation}
    Therefore, using Lemma \ref{lem:GS_positivity_d;gkdv_st} again, we have
    \begin{equation}
        d(c) \leq J_{c}(v_{0}) \leq \liminf_{n \rightarrow \infty} J_{c}(w_{n}) = \lim_{n \rightarrow \infty} J_{c}(v_{n}) = d(c),
    \end{equation}
    that is, $J_{c}(v_{0}) = d(c)$.
    Combining this with Lemma \ref{lem:GS_positivity_d;gkdv_st}, we obtain $K_{c}(v) \geq 0$.
    This and \eqref{eq:GS0560;gkdv_st} yield that $K_{c}(v_{0}) = 0$.
    Thus, we conclude that $v_{0} \in \mathcal{M}_{c}$.

    In case (II-1), we can similarly conclude that $\mathcal{M}_{c} \neq \emptyset$ using \eqref{eq:GS0530;gkdv_st} instead of \eqref{eq:GS0540;gkdv_st}.

    Hence, the proof is accomplished.
\end{proof}

Here we observe solutions to the stationary problems \eqref{eq:Intro0300;gkdv_st} further.
\begin{lem} \label{lem:GS_regularlity_of_sol;gkdv_st}
    Let $\phi \in H^{\sigma/2}(\mathbb{R})$ be a nontrivial solution to \eqref{eq:Intro0300;gkdv_st}.
    Then $\phi \in H^{\infty}(\mathbb{R})$ holds.
\end{lem}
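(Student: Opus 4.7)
The plan is a standard bootstrap argument based on the fixed-point identity
\[
    \phi = (D_{x}^{\sigma} + c)^{-1} \bigl( a \phi^{p} + \phi^{q} \bigr),
\]
obtained by inverting the linear part of \eqref{eq:Intro0300;gkdv_st}. The operator $(D_{x}^{\sigma} + c)^{-1}$ is the Fourier multiplier with symbol $(|\xi|^{\sigma} + c)^{-1}$, which is comparable to $\langle \xi \rangle^{-\sigma}$ up to multiplicative constants; consequently it maps $H^{s}(\mathbb{R})$ into $H^{s+\sigma}(\mathbb{R})$ for every $s \in \mathbb{R}$, and $L^{r}(\mathbb{R})$ into $W^{\sigma, r}(\mathbb{R})$ for $1 < r < \infty$ via the Mikhlin multiplier theorem.

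The first step is to place $\phi$ in $L^{\infty}(\mathbb{R})$. If $\sigma > 1$, this is immediate from the Sobolev embedding $H^{\sigma/2}(\mathbb{R}) \hookrightarrow L^{\infty}(\mathbb{R})$. If $\sigma = 1$, I would instead use $H^{1/2}(\mathbb{R}) \hookrightarrow L^{r}(\mathbb{R})$ for every $r \in [2, \infty)$ to deduce $a\phi^{p} + \phi^{q} \in L^{r}(\mathbb{R})$ for every $r \in [1, \infty)$; the smoothing property then yields $\phi \in W^{1, r}(\mathbb{R})$ for some $r > 1$, and a final Sobolev embedding $W^{1, r}(\mathbb{R}) \hookrightarrow L^{\infty}(\mathbb{R})$ completes this step.

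Once $\phi \in H^{\sigma/2}(\mathbb{R}) \cap L^{\infty}(\mathbb{R})$ is established, the iteration on Sobolev exponents is routine. The algebra estimate
\[
    \| fg \|_{H^{s}} \lesssim \| f \|_{L^{\infty}} \| g \|_{H^{s}} + \| g \|_{L^{\infty}} \| f \|_{H^{s}} \quad (s \geq 0)
\]
shows that $\phi^{p}, \phi^{q} \in H^{s}$ whenever $\phi \in H^{s} \cap L^{\infty}$, and the $\sigma$-smoothing of $(D_{x}^{\sigma} + c)^{-1}$ then upgrades $\phi$ to $H^{s+\sigma}$. Starting from $s = \sigma/2$ and noting that at every subsequent stage $s + \sigma > 1/2$, so that $L^{\infty}$-control is preserved automatically by Sobolev embedding, induction yields $\phi \in H^{\sigma/2 + k\sigma}(\mathbb{R})$ for every $k \in \mathbb{N}_{0}$, and therefore $\phi \in H^{\infty}(\mathbb{R})$.

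The only genuine subtlety lies in the borderline case $\sigma = 1$, where $H^{\sigma/2}$ fails to embed into $L^{\infty}$ and the preliminary $L^{r}$-bootstrap via the Mikhlin theorem is required; once the $L^{\infty}$ bound is secured, the remainder of the argument is mechanical.
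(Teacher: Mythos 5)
Your argument is correct: the fixed-point identity, the $\sigma$-smoothing of $(D_{x}^{\sigma}+c)^{-1}$, the preliminary $L^{\infty}$ bound (with the Mikhlin detour at $\sigma=1$), and the Kato--Ponce-type induction all check out, and the induction closes because $s+\sigma > 1/2$ at every subsequent stage. The paper follows the same overall bootstrap strategy but implements the product estimates more cheaply, staying entirely in $L^{2}$-based Fourier space: in the first step it writes $\| D_{x}^{\sigma}\phi \|_{L^{2}} \leq \| \phi \|_{L^{2p}}^{p} + \| \phi \|_{L^{2q}}^{q} \lesssim \| \phi \|_{H^{\sigma/2}}^{p} + \| \phi \|_{H^{\sigma/2}}^{q}$ using only $H^{\sigma/2}(\mathbb{R}) \hookrightarrow L^{2r}(\mathbb{R})$, which holds for every finite $r$ even at $\sigma=1$ --- so the borderline case that forces your Mikhlin/$W^{1,r}$ detour never arises --- and this already lands $\phi$ in $H^{\sigma}(\mathbb{R}) \hookrightarrow H^{1}(\mathbb{R})$. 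From there the paper exploits that $p,q$ are positive integers and $H^{j}(\mathbb{R})$ is an algebra for integer $j \geq 1$, gaining one derivative per step via $\| |\xi|^{\sigma+j}\hat{\phi} \|_{L^{2}} \leq \| |\xi|^{j}\mathscr{F}[\phi^{p}] \|_{L^{2}} + \| |\xi|^{j}\mathscr{F}[\phi^{q}] \|_{L^{2}} \lesssim \| \phi \|_{H^{j}}^{p} + \| \phi \|_{H^{j}}^{q}$. What your version buys is generality (it gains the full $\sigma$ derivatives per step via the fractional Leibniz rule and would survive non-integer nonlinearities); what the paper's version buys is elementarity --- no $L^{\infty}$ control, no Mikhlin theorem, no fractional product rule, and a uniform treatment of all $\sigma \in [1,2]$.
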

\begin{proof}
    Let $\phi \in H^{\sigma/2}(\mathbb{R})$ be a nontrivial solution to \eqref{eq:Intro0300;gkdv_st}.
    Then, taking the Fourier transform, we obtain
    \begin{equation}
        \hat{\phi}(\xi) = \frac{1}{|\xi|^{\sigma} + c} \mathscr{F}\left[ a\phi^{p} + \phi^{q} \right].
    \end{equation}
    First, by the Sobolev embedding, we have
    \begin{align}
        \| D_{x}^{\sigma} \phi \|_{L^{2}} = \| |\xi|^{\sigma} \hat{\phi} \|_{L^{2}} &= \left\| \frac{|\xi|^{\sigma}}{|\xi|^{\sigma} + c} \mathscr{F}\left[ a \phi^{p} + \phi^{q} \right] \right\|_{L^{2}} \\
        &\leq \| \mathscr{F}\left[ \phi^{p} \right] \|_{L^{2}} + \| \mathscr{F}\left[ \phi^{q} \right] \|_{L^{2}} \\
        &=  \| \phi \|_{L^{2p}}^{p} + \| \phi \|_{L^{2q}}^{q} \\
        &\leq C \left( \| \phi \|_{H^{\sigma/2}}^{p} + \| \phi \|_{H^{\sigma/2}}^{q} \right),
    \end{align}
    which implies that $\phi \in H^{\sigma}(\mathbb{R}) \hookrightarrow H^{1}(\mathbb{R})$.
    Moreover, since $p, q \in \mathbb{N}$, we see that $\| \phi^{r} \|_{H^{1}} \leq C \| \phi \|_{H^{1}}^{r}$ for $r \in \{p, q$\}.
    Then, we have
    \begin{align}
        \| D_{x}^{\sigma + 1} \phi \|_{L^{2}} = \| |\xi|^{\sigma + 1} \hat{\phi} \|_{L^{2}} &= \left\| \frac{|\xi|^{\sigma + 1}}{|\xi|^{\sigma} + c} \mathscr{F}\left[ a \phi^{p} + \phi^{q} \right] \right\|_{L^{2}} \\
        &\leq \| |\xi| \mathscr{F} \left[ \phi^{p} \right] \|_{L^{2}} + \| |\xi| \mathscr{F} \left[ \phi^{q} \right] \|_{L^{2}} \\
        &\leq \| \phi^{p} \|_{H^{1}} + \| \phi^{q} \|_{H^{1}} \\
        &\leq C \left( \| \phi \|_{H^{1}}^{p} + \| \phi \|_{H^{1}}^{q} \right),
    \end{align}
    which implies that $\phi \in H^{\sigma+1}(\mathbb{R})$.
    Inductively, we can obtain $\phi \in H^{\sigma + j}(\mathbb{R})$ for all $j \in \mathbb{N}$, which completes the proof.
\end{proof}

\subsection{Evenness, positivity, and negativity}
In this subsection, we observe properties of ground state solutions to \eqref{eq:Intro0300;gkdv_st}.

First, we recall the symmetric decreasing rearrangement of a nonnegative function.
Let $v \in H^{\sigma/2}(\mathbb{R})$.
Then the following inequality holds:
\begin{equation}
    \| D_{x}^{\sigma/2} |v| \|_{L^{2}} \leq \| D_{x}^{\sigma/2} v \|_{L^{2}}, \label{eq:GS0600;gkdv_st}
\end{equation}
which implies that $|v| \in H^{\sigma/2}(\mathbb{R})$.
Moreover, we can define the symmetric decreasing rearrangement of $|v|$ with $|v|^{\ast}$ denoting it, and obtain
\begin{equation}
    \| D_{x}^{\sigma/2} |v|^{\ast} \|_{L^{2}} \leq \| D_{x}^{\sigma/2} |v| \|_{L^{2}}, \label{eq:GS0610;gkdv_st}
\end{equation}
We can prove these inequalities similarly to \cite[Lemma 8.15]{Angulo}.
In addition, we note that $\| v^{\ast} \|_{L^{\gamma}} = \| v \|_{L^{\gamma}}$ holds for any nonnegative function $v \in L^{\gamma}(\mathbb{R})$ and any $\gamma \in [1, \infty)$.

\begin{lem} \label{lem:GS_nonnegative_even;gkdv_st}
    Assume condition {\rm (I)} or {\rm (II-1)} in Theorem \ref{thm:existence_of_GS;gkdv_st}.
    Let $\phi \in \mathcal{G}_{c}$.
    Then $|\phi|^{\ast} \in \mathcal{G}_{c}$ holds.
\end{lem}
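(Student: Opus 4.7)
The plan is to verify $|\phi|^{\ast} \in \mathcal{M}_{c}$, which equals $\mathcal{G}_{c}$ by Lemma \ref{lem:existence_gs_GM;gkdv_st}. The inequalities \eqref{eq:GS0600;gkdv_st} and \eqref{eq:GS0610;gkdv_st}, together with the $L^{r}$-invariance of the symmetric decreasing rearrangement, immediately give $|\phi|^{\ast} \in H^{\sigma/2}(\mathbb{R}) \setminus \{0\}$. The key observation on the nonlinear terms is the parity computation $\int_{\mathbb{R}} (|\phi|^{\ast})^{k+1} dx = \int_{\mathbb{R}} |\phi|^{k+1} dx$, which equals $\int_{\mathbb{R}} \phi^{k+1} dx$ when $k+1$ is even and dominates it when $k+1$ is odd.

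Combining this with \eqref{eq:GS0600;gkdv_st}--\eqref{eq:GS0610;gkdv_st}, the next step is to establish $S_{c}(|\phi|^{\ast}) \leq S_{c}(\phi) = d(c)$ and $K_{c}(|\phi|^{\ast}) \leq K_{c}(\phi) = 0$. The parity of the two exponents is what makes this work. In case (I), $a = +1$ and $q+1$ is even, so the term $-\frac{1}{q+1}\int v^{q+1} dx$ is rearrangement invariant while $-\frac{1}{p+1}\int v^{p+1} dx$ can only decrease. In case (II-1), $a = -1$ and $p+1$ is even, so $+\frac{1}{p+1}\int v^{p+1} dx$ is invariant while $-\frac{1}{q+1}\int v^{q+1} dx$ can only decrease. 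The kinetic part drops in both cases, and the same bookkeeping applies to $K_{c}$.

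It remains to exclude $K_{c}(|\phi|^{\ast}) < 0$, which will be done via the auxiliary functionals of Lemma \ref{lem:GS_ineq_IJ;gkdv_st}. In case (I), the representation \eqref{eq:GS0230;gkdv_st} exhibits $J_{c}$ as a positive combination of $\|v\|^{2}_{H^{\sigma/2}_{c}}$ and $\int v^{q+1} dx$; since $q+1$ is even the latter is invariant under $\phi \mapsto |\phi|^{\ast}$, so $J_{c}(|\phi|^{\ast}) \leq J_{c}(\phi) = S_{c}(\phi) = d(c)$. Were $K_{c}(|\phi|^{\ast}) < 0$, Lemma \ref{lem:GS_ineq_IJ;gkdv_st}(i) would force $J_{c}(|\phi|^{\ast}) > d(c)$, a contradiction. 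In case (II-1), \eqref{eq:GS0220;gkdv_st} with $a = -1$ expresses $I_{c}$ as a positive combination of $\|v\|^{2}_{H^{\sigma/2}_{c}}$ and $\int v^{p+1} dx$, with $p+1$ even making the latter invariant; the analogous argument using Lemma \ref{lem:GS_ineq_IJ;gkdv_st}(ii) concludes. Hence $K_{c}(|\phi|^{\ast}) = 0$, and combining with $S_{c}(|\phi|^{\ast}) \leq d(c)$ forces equality, so $|\phi|^{\ast} \in \mathcal{M}_{c}$.

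The main point requiring care is the parity bookkeeping: case (II-2) is excluded precisely because there $p+1$ is odd and $-a/(p+1) = +1/(p+1)$ has the wrong sign, so rearrangement can increase rather than decrease the $p$-term of $S_{c}$. The rest of the argument is essentially transport of the classical decreasing rearrangement philosophy through the fractional framework, relying only on \eqref{eq:GS0600;gkdv_st}--\eqref{eq:GS0610;gkdv_st}.
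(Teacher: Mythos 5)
Your proof is correct and is essentially the same as the paper's: show $S_{c}(|\phi|^{\ast}) \leq d(c)$ and $K_{c}(|\phi|^{\ast}) \leq 0$ via \eqref{eq:GS0600;gkdv_st}--\eqref{eq:GS0610;gkdv_st} and the parity of the exponents, then rule out $K_{c}(|\phi|^{\ast}) < 0$ with Lemma \ref{lem:GS_ineq_IJ;gkdv_st} to conclude $|\phi|^{\ast} \in \mathcal{M}_{c} = \mathcal{G}_{c}$. If anything, your version is slightly more careful than the paper's, since you explicitly use $J_{c}$ with part (i) in case (I) and $I_{c}$ with part (ii) in case (II-1), whereas the paper writes only the $J_{c}$ inequality for both cases.
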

\begin{proof}
    Let $\phi \in \mathcal{G}_{c}$.
    Then $\phi$ satisfies that $S_{c}(\phi) = d(c)$ and $K_{c}(\phi) = 0$.
    By \eqref{eq:GS0600;gkdv_st} and \eqref{eq:GS0610;gkdv_st}, we have
    \begin{align}
        S_{c}(|\phi|^{\ast}) &= \frac{1}{2} \| |\phi|^{\ast} \|_{H^{\sigma/2}_{c}}^{2} - \frac{a}{p+1} \int_{\mathbb{R}} (|\phi|^{\ast})^{p+1} \, dx - \frac{1}{q+1} \int_{\mathbb{R}} (|\phi|^{\ast})^{q+1} \, dx \\
        &\leq \frac{1}{2} \| \phi \|_{H^{\sigma/2}_{c}}^{2} - \frac{a}{p+1} \int_{\mathbb{R}} |\phi|^{p+1} \, dx - \frac{1}{q+1} \int_{\mathbb{R}} |\phi|^{q+1} \, dx \\
        &\leq \frac{1}{2} \| \phi \|_{H^{\sigma/2}_{c}}^{2} - \frac{a}{p+1} \int_{\mathbb{R}} \phi^{p+1} \, dx - \frac{1}{q+1} \int_{\mathbb{R}} \phi^{q+1} \, dx =S_{c}(\phi) = d(c), \label{eq:GS0620;gkdv_st}
    \end{align}
    and similarly,
    \begin{align}
        & K_{c}(|\phi|^{\ast}) \leq K_{c}(\phi) = 0, \label{eq:GS0630;gkdv_st} \\
        & J_{c}(|\phi|^{\ast}) \leq J_{c}(\phi) = d(c). \label{eq:GS0640;gkdv_st}
    \end{align}
    By \eqref{eq:GS0640;gkdv_st} and Lemma \ref{lem:GS_ineq_IJ;gkdv_st}, we obtain $K_{c}(|\phi|^{\ast}) \geq 0$.
    This and \eqref{eq:GS0630;gkdv_st} give that $K_{c}(|\phi|^{\ast}) = 0$.
    Therefore, by \eqref{eq:GS0620;gkdv_st} and the definition of $d(c)$, we see that $S_{c}(|\phi|^{\ast}) = d(c)$, which implies that $|\phi|^{\ast} \in \mathcal{M}_{c} = \mathcal{G}_{c}$.
\end{proof}

As a consequence of Lemma \ref{lem:GS_nonnegative_even;gkdv_st}, the stationary problem \eqref{eq:Intro0300;gkdv_st} has an even and nonnegative ground state solution in cases (I) or (II-1) in Theorem \ref{thm:existence_of_GS;gkdv_st}.

In case (II-2) in Theorem \ref{thm:existence_of_GS;gkdv_st}, we can see the existence of an even and nonpositive ground state solution to \eqref{eq:Intro0300;gkdv_st}.
\begin{lem} \label{lem:GS_even_nonpositive;gkdv_st}
    Assume condition {\rm (II-2)} in Theorem \ref{thm:existence_of_GS;gkdv_st}.
    Let $\phi \in \mathcal{G}_{c}$.
    Then $-|\phi|^{\ast} \in \mathcal{G}_{c}$ holds.
\end{lem}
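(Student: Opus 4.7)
The plan is to mirror the proof of Lemma \ref{lem:GS_nonnegative_even;gkdv_st}, but replacing $|\phi|^{\ast}$ by $-|\phi|^{\ast}$ and exploiting the parity combination $p$ even, $q$ odd, $a=-1$. The target is again to verify that $\psi \coloneq -|\phi|^{\ast}$ lies in $\mathcal{M}_c$, that is $K_c(\psi)=0$ and $S_c(\psi)=d(c)$; then $\psi \in \mathcal{G}_c$ by Lemma \ref{lem:existence_gs_GM;gkdv_st}.

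First I would collect the relevant invariance/monotonicity properties of the rearrangement in this parity regime. Since $\||\phi|^{\ast}\|_{H^{\sigma/2}_c} \le \|\phi\|_{H^{\sigma/2}_c}$ by \eqref{eq:GS0600;gkdv_st} and \eqref{eq:GS0610;gkdv_st}, and the sign flip does not affect this quantity, we have $\|\psi\|_{H^{\sigma/2}_c}\le \|\phi\|_{H^{\sigma/2}_c}$. Because $q+1$ is even, $(-|\phi|^{\ast})^{q+1}=(|\phi|^{\ast})^{q+1}$, so by equimeasurability of the rearrangement $\int\psi^{q+1}\,dx=\int |\phi|^{q+1}\,dx=\int \phi^{q+1}\,dx$. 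Because $p+1$ is odd, $\psi^{p+1}=-(|\phi|^{\ast})^{p+1}\le 0$, and pointwise $\phi^{p+1}\ge -|\phi|^{p+1}$, so $\int \psi^{p+1}\,dx=-\int|\phi|^{p+1}\,dx\le \int\phi^{p+1}\,dx$.

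Next, with $a=-1$, the coefficient in front of $\int v^{p+1}\,dx$ in $S_c$ and $K_c$ is positive, so the inequality on the $p+1$-integrals goes the favorable way: both $S_c(\psi)\le S_c(\phi)=d(c)$ and $K_c(\psi)\le K_c(\phi)=0$ follow, exactly as in \eqref{eq:GS0620;gkdv_st}--\eqref{eq:GS0630;gkdv_st}. The functional $J_c$ in \eqref{eq:GS0230;gkdv_st} depends only on $\|v\|_{H^{\sigma/2}_c}^2$ and $\int v^{q+1}\,dx$, so the two observations above give $J_c(\psi)\le J_c(\phi)=d(c)$, the analogue of \eqref{eq:GS0640;gkdv_st}.

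Finally I would close the argument exactly as in the proof of Lemma \ref{lem:GS_nonnegative_even;gkdv_st}: since case (II-2) is covered by Lemma \ref{lem:GS_ineq_IJ;gkdv_st}(i), $K_c(\psi)<0$ would force $J_c(\psi)>d(c)$, a contradiction; hence $K_c(\psi)\ge 0$, and combined with $K_c(\psi)\le 0$ we get $K_c(\psi)=0$. Since $\psi\neq 0$, the definition of $d(c)$ gives $S_c(\psi)\ge d(c)$, which together with $S_c(\psi)\le d(c)$ yields $S_c(\psi)=d(c)$, so $\psi\in \mathcal{M}_c=\mathcal{G}_c$. The only point requiring care is confirming the sign bookkeeping in the odd exponent $p+1$ with $a=-1$: this is where the parity hypothesis of (II-2) is essential and is the only real difference from Lemma \ref{lem:GS_nonnegative_even;gkdv_st}.
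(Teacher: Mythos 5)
Your proof is correct and follows essentially the same route as the paper: the rearrangement inequalities together with the parity bookkeeping ($p+1$ odd, $q+1$ even, $a=-1$) give $S_{c}(-|\phi|^{\ast})\le d(c)$ and $K_{c}(-|\phi|^{\ast})\le 0$, and the auxiliary functional then forces $K_{c}(-|\phi|^{\ast})=0$ and $S_{c}(-|\phi|^{\ast})=d(c)$, so $-|\phi|^{\ast}\in\mathcal{M}_{c}=\mathcal{G}_{c}$. One small remark in your favour: the paper's displayed chain uses $I_{c}$ at the final step, whereas Lemma \ref{lem:GS_ineq_IJ;gkdv_st} covers case (II-2) through part (i), i.e.\ through $J_{c}$; your choice of $J_{c}$ (which depends only on $\| v \|_{H^{\sigma/2}_{c}}^{2}$ and $\int_{\mathbb{R}} v^{q+1}\,dx$, both controlled here) is the one the cited lemma actually licenses.
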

\begin{proof}
    Since $p$ is even, and $q$ is odd, we see that
    \begin{align}
        & \int_{\mathbb{R}} (-|\phi|^{\ast})^{p+1} \, dx = - \int_{\mathbb{R}} (|\phi|^{\ast})^{p+1} \, dx = - \int_{\mathbb{R}} |\phi|^{p+1} \, dx, \\
        & \int_{\mathbb{R}} (-|\phi|^{\ast})^{q+1} \, dx = \int_{\mathbb{R}} (|\phi|^{\ast})^{q+1} \, dx = \int_{\mathbb{R}} |\phi|^{q+1} \, dx = \int_{\mathbb{R}} \phi^{q+1} \, dx.
    \end{align}
    Then, similarly to Lemma \ref{lem:GS_nonnegative_even;gkdv_st}, we obtain
    \begin{align}
        S_{c}(-|\phi|^{\ast}) &= \frac{1}{2} \| |\phi|^{\ast} \|_{H^{\sigma/2}_{c}}^{2} - \frac{1}{p+1} \int_{\mathbb{R}} (|\phi|^{\ast})^{p+1} \, dx - \frac{1}{q+1} \int_{\mathbb{R}} (|\phi|^{\ast})^{q+1} \, dx \\
        &\leq \frac{1}{2} \| \phi \|_{H^{\sigma/2}_{c}}^{2} - \frac{1}{p+1} \int_{\mathbb{R}} |\phi|^{p+1} \, dx - \frac{1}{q+1} \int_{\mathbb{R}} |\phi|^{q+1} \, dx \\
        &\leq \frac{1}{2} \| \phi \|_{H^{\sigma/2}_{c}}^{2} + \frac{1}{p+1} \int_{\mathbb{R}} \phi^{p+1} \, dx - \frac{1}{q+1} \int_{\mathbb{R}} \phi^{q+1} \, dx = S_{c}(\phi) = d(c), \\
        K_{c}(-|\phi|^{\ast}) &\leq K_{c}(\phi) = 0, \\
        I_{c}(-|\phi|^{\ast}) &\leq I_{c}(\phi) = d(c).
    \end{align}
    Therefore, we can obtain $K_{c}(-|\phi|^{\ast}) = 0$ and $S_{c}(-|\phi|^{\ast}) = d(c)$ with similar way to Lemma \ref{lem:GS_nonnegative_even;gkdv_st}, and conclude that $-|\phi|^{\ast} \in \mathcal{M}_{c} = \mathcal{G}_{c}$.
\end{proof}

At the end of this subsection, we consider the positivity or negativity of ground state solutions.
First, for $\nu > 0$, we define a function $N_{\nu}^{\sigma}\colon \mathbb{R} \rightarrow \mathbb{R}$ as
\begin{equation}
    N_{\nu}^{\sigma}(x) \coloneq \frac{1}{\sqrt{2 \pi}} \int_{\mathbb{R}} \frac{1}{|\xi|^{\sigma} + \nu} e^{i \xi x} \, dx = \mathscr{F}^{-1} \left[ \frac{1}{|\xi|^{\sigma} + \nu} \right](x). \label{eq:GS0650;gkdv_st}
\end{equation}
It is known that $N_{\nu}^{\sigma}$ is positive, even, and strictly decreasing in $|x|$.
For details, see e.g.\ \cite[Appendix A]{Frank-Lenzmann}.

\begin{lem} \label{lem:GS_positiveness;gkdv_st}
    Assume conditions {\rm (I)} or {\rm (II-1)} in Theorem \ref{thm:existence_of_GS;gkdv_st}.
    Let $\phi \in H^{\sigma/2}(\mathbb{R})$ be a nonnegative solution to \eqref{eq:Intro0300;gkdv_st}.
    Then $\phi$ is strictly positive.
\end{lem}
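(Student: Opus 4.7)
The plan is to represent $\phi$ as the convolution of the strictly positive kernel $N_{\nu}^{\sigma}$ with a nonnegative, nontrivial density, and then to conclude strict positivity pointwise from the fact that convolving a pointwise strictly positive kernel with a nonnegative, not-identically-zero function is strictly positive at every point. Taking the Fourier transform of \eqref{eq:Intro0300;gkdv_st} yields $(|\xi|^{\sigma}+c)\hat{\phi}(\xi)=\mathscr{F}[a\phi^{p}+\phi^{q}](\xi)$, which I rewrite as
\begin{equation}
\phi(x) = c_{0}\int_{\mathbb{R}} N_{c}^{\sigma}(x-y)\bigl(a\phi^{p}(y)+\phi^{q}(y)\bigr)\,dy
\end{equation}
for a positive constant $c_{0}$ coming from the Fourier normalization. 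This identity makes pointwise sense because $\phi\in H^{\infty}(\mathbb{R})\subset L^{\infty}(\mathbb{R})$ by Lemma \ref{lem:GS_regularlity_of_sol;gkdv_st}, and $N_{c}^{\sigma}$ is the positive Bessel-type kernel recalled in \eqref{eq:GS0650;gkdv_st}.

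In case (I), where $a=+1$ and $q$ is odd, the density $\phi^{p}+\phi^{q}$ is nonnegative since $\phi\geq 0$, and it is not identically zero because $\phi$ is nontrivial. Since $N_{c}^{\sigma}(x-y)>0$ for all $x,y\in\mathbb{R}$, the integral is strictly positive for every $x\in\mathbb{R}$.

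Case (II-1), where $a=-1$ and $p$ is odd, is the main obstacle: the raw density $\phi^{q}-\phi^{p}$ may change sign, and the representation with weight $c$ does not directly produce positivity. The remedy I plan to use is to enlarge the spectral weight. Using that $\phi\in L^{\infty}(\mathbb{R})$, fix $\alpha\geq\|\phi\|_{L^{\infty}}^{p-1}$ and rewrite \eqref{eq:Intro0300;gkdv_st} as
\begin{equation}
D_{x}^{\sigma}\phi + (c+\alpha)\phi = \phi\bigl(\alpha-\phi^{p-1}+\phi^{q-1}\bigr).
\end{equation}
By the choice of $\alpha$, the factor $\alpha-\phi^{p-1}+\phi^{q-1}$ is nonnegative, so the right-hand side is nonnegative and nontrivial. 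Repeating the Fourier inversion with $c+\alpha$ in place of $c$ gives
\begin{equation}
\phi(x) = c_{1}\int_{\mathbb{R}} N_{c+\alpha}^{\sigma}(x-y)\,\phi(y)\bigl(\alpha-\phi(y)^{p-1}+\phi(y)^{q-1}\bigr)\,dy,
\end{equation}
and strict positivity follows at every $x$ from the pointwise positivity of $N_{c+\alpha}^{\sigma}$ exactly as in case (I).

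The only delicate point is the handling of case (II-1), where a direct application of the convolution representation does not have a definite sign; the $\alpha$-shift resolves this and relies crucially on the boundedness of $\phi$ provided by Lemma \ref{lem:GS_regularlity_of_sol;gkdv_st}. Everything else — the validity of the pointwise convolution formula and the strict pointwise positivity of $N_{\nu}^{\sigma}$ — is already established in the paper.
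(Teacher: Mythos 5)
Your proposal is correct and follows essentially the same route as the paper: both represent $\phi$ via the positive resolvent kernel $N_{\nu}^{\sigma}$ after shifting the spectral parameter so that the resulting density is nonnegative, and then read off strict positivity pointwise (the paper phrases the last step as a contradiction from $\phi(x_{0})=0$, and uses a single shift $\lambda=\sup_{x}(a\phi^{p-1}+\phi^{q-1})+1$ covering both cases, whereas you shift only in case (II-1); these are cosmetic differences).
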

\begin{proof}
    Let $\phi \in H^{\sigma/2}(\mathbb{R})$ be a nonnegative solution to \eqref{eq:Intro0300;gkdv_st}.
    By Lemma \ref{lem:GS_regularlity_of_sol;gkdv_st}, we see that $\phi \in C^{\infty}(\mathbb{R}) \cap L^{\infty}(\mathbb{R})$.
    
    Here we set
    \begin{equation}
        \lambda \coloneq \sup_{x \in \mathbb{R}}\left( a \phi^{p-1}(x) + \phi^{q-1}(x) \right) + 1.
    \end{equation}
    Then, we see that $\lambda > 0$ and
    \begin{equation}
        \lambda - (a \phi^{p-1}(y) + \phi^{q-1}(y)) \geq 1 \label{eq:GS0660;gkdv_st}
    \end{equation}
    holds for all $y \in \mathbb{R}$.
    Now adding $\lambda \phi$ to both sides of \eqref{eq:Intro0300;gkdv_st} and using the function $N_{\nu}^{\sigma}$ with $\nu = c + \lambda$, we obtain
    \begin{equation}
        \phi(x) = \left( N_{c + \lambda}^{\sigma} \ast \left( \lambda \phi - (a \phi^{p} + \phi^{q}) \right) \right) (x) = \int_{\mathbb{R}} N_{c + \lambda}^{\sigma}(x-y)(\lambda - (a \phi^{p-1}(y) + \phi^{q-1}(y)))\phi(y) \, dy. \label{eq:GS0670;gkdv_st}
    \end{equation}
    Suppose that there exists some $x_{0} \in \mathbb{R}$ such that $\phi(x_{0}) = 0$.
    By \eqref{eq:GS0670;gkdv_st}, we obtain
    \begin{equation}
        \int_{\mathbb{R}} N_{c + \lambda}^{\sigma}(x_{0} - y)(\lambda - (a \phi^{p-1}(y) + \phi^{q-1}(y)))\phi(y) \, dy = 0.
    \end{equation}
    Since $N_{c + \lambda}^{\sigma} > 0$ in $\mathbb{R}$, we see it from \eqref{eq:GS0660;gkdv_st} that $\phi \equiv 0$ in $\mathbb{R}$, which contradicts to that $\phi$ is nontrivial.

    Hence, the proof is completed.
\end{proof}

With almost the same method as Lemma \ref{lem:GS_positiveness;gkdv_st}, we can show the following statement.
\begin{lem} \label{lem:GS_negativeness;gkdv_st}
    Assume condition {\rm (II-2)} in Theorem \ref{thm:existence_of_GS;gkdv_st}.
    Let $\phi \in H^{\sigma/2}(\mathbb{R})$ be a nonpositive solution to \eqref{eq:Intro0300;gkdv_st}.
    Then $\phi$ is strictly negative.
\end{lem}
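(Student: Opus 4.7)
The plan is to transcribe the argument of Lemma \ref{lem:GS_positiveness;gkdv_st} with the sign of $\phi$ flipped, using the parity structure of condition (II-2) to keep the signs of all intermediate quantities under control. Let $\phi \in H^{\sigma/2}(\mathbb{R})$ be a nontrivial nonpositive solution of \eqref{eq:Intro0300;gkdv_st}. By Lemma \ref{lem:GS_regularlity_of_sol;gkdv_st}, $\phi \in C^{\infty}(\mathbb{R}) \cap L^{\infty}(\mathbb{R})$, so every pointwise manipulation below is justified.

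Next, mirroring the construction in Lemma \ref{lem:GS_positiveness;gkdv_st}, I would set
\[
    \lambda := \sup_{y \in \mathbb{R}} \bigl( a \phi^{p-1}(y) + \phi^{q-1}(y) \bigr) + 1,
\]
which is finite because $\phi \in L^{\infty}(\mathbb{R})$, and which satisfies $\lambda - (a\phi^{p-1}(y) + \phi^{q-1}(y)) \ge 1$ for all $y \in \mathbb{R}$. Under condition (II-2) we have $a = -1$, $p-1$ odd, and $q-1$ even, so for $\phi(y) \le 0$ one has $\phi^{p-1}(y) \le 0$ and $\phi^{q-1}(y) \ge 0$; hence $a\phi^{p-1}(y) + \phi^{q-1}(y) = -\phi^{p-1}(y) + \phi^{q-1}(y) \ge 0$, which in particular gives $\lambda \ge 1 > 0$. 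Adding $\lambda\phi$ to both sides of \eqref{eq:Intro0300;gkdv_st} and convolving with the Green's function $N^{\sigma}_{c+\lambda}$ from \eqref{eq:GS0650;gkdv_st}, exactly as in the proof of Lemma \ref{lem:GS_positiveness;gkdv_st}, one arrives at the representation
\[
    \phi(x) = \int_{\mathbb{R}} N^{\sigma}_{c+\lambda}(x - y) \bigl( \lambda - (a\phi^{p-1}(y) + \phi^{q-1}(y)) \bigr) \phi(y) \, dy.
\]

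Now assume, for contradiction, that $\phi(x_{0}) = 0$ for some $x_{0} \in \mathbb{R}$. In the integrand evaluated at $x_{0}$, the factor $N^{\sigma}_{c+\lambda}(x_{0} - y)$ is strictly positive, the factor $\lambda - (a\phi^{p-1}(y) + \phi^{q-1}(y))$ is bounded below by $1$, and $\phi(y) \le 0$; therefore the integrand is pointwise nonpositive. Since its integral vanishes, the integrand must be zero almost everywhere, which forces $\phi(y) = 0$ for a.e.\ $y$, and by continuity $\phi \equiv 0$, contradicting the nontriviality of $\phi$.

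The only step that is not a verbatim copy of Lemma \ref{lem:GS_positiveness;gkdv_st} is the sign bookkeeping in the first paragraph, and that is exactly where hypothesis (II-2) enters in a nontrivial way: the parities of $p$ and $q$ pin down the signs of $\phi^{p-1}$ and $\phi^{q-1}$ on $\{\phi \le 0\}$ so that the same choice of $\lambda$ as in the positive case delivers a uniform positive lower bound for the coefficient under the integral, after which the sign of the integrand flips from nonnegative (as in Lemma \ref{lem:GS_positiveness;gkdv_st}) to nonpositive, and the contradiction goes through identically.
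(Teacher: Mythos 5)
Your proof is correct and follows exactly the route the paper intends: the paper omits the argument, stating only that it is "almost the same method" as Lemma \ref{lem:GS_positiveness;gkdv_st}, and you supply precisely the missing sign bookkeeping — under (II-2) the parities give $a\phi^{p-1}+\phi^{q-1}\geq 0$ on $\{\phi\leq 0\}$, so the coefficient in the $N^{\sigma}_{c+\lambda}$-representation stays bounded below by $1$ and the integrand becomes nonpositive, forcing $\phi\equiv 0$ if it ever vanishes. No gaps.
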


\section{Sufficient conditions for the stability of travelling wave solutions} \label{section:stability;gkdv_st}
In this section, we consider sufficient conditions for Theorem \ref{thm:stability;gkdv_st} following the method by Grillakis--Shatah--Strauss~\cite{GSS87}.

For $c > 0$, we let $\phi_{c} \in H^{\sigma/2}(\mathbb{R})$ be a ground state solution to \eqref{eq:Intro0300;gkdv_st} obtained in Theorem \ref{thm:existence_of_GS;gkdv_st}.
According to Theorem 3.5 of \cite{GSS87}, the following statement (A) is sufficient for travelling wave solutions to be stable:
\begin{itemize}
    \item[(A)] 
        \textit{
        There exist $C_{0} > 0$ and $\varepsilon_{0} > 0$ shch that
            \begin{equation}
                E(v) - E(\phi_{c}) \geq C_{0} \inf_{y \in \mathbb{R}} \| v - \phi_{c}(\cdot - y) \|_{H^{\sigma/2}}^{2} \label{eq:stability0100;gkdv_st}
            \end{equation}
        holds for all $u \in U_{\varepsilon_{0}}(\phi_{c})$ satisfying $M(u) = M(\phi_{c})$.
        }
\end{itemize}
We can show that statement (A) holds if we assume the following statement:
\begin{itemize}
    \item[(B)]
        \textit{
            There exists $C_{1} > 0$ which satisfies the following statement:
        \begin{equation}
            \langle S_{c}''(\phi_{c})v, v \rangle \geq C_{1} \| v \|_{H^{\sigma/2}}^{2} \label{eq:stability0200;gkdv_st}
        \end{equation}
        holds for all $v \in H^{\sigma/2}(\mathbb{R})$ satisfying $(v, \phi_{c})_{L^{2}} = (v, \partial_{x} \phi_{c})_{L^{2}} = 0$.
        }
\end{itemize}
\begin{proof}[Proof that (B) $\Longrightarrow$ (A)]
    First, with the idea of Cipolatti~\cite[Lemma 3.4]{Cipolatti_1993}, we can see that there exists $\varepsilon_{1} > 0$ such that, for all $\varepsilon \in (0, \varepsilon_{1})$ and $u \in U_{\varepsilon}(\phi_{c})$, there exists $\tilde{y} \in \mathbb{R}$ such that
    \begin{equation}
        \| u - \phi_{c}(\cdot - \tilde{y}) \|_{H^{\sigma/2}} = \min_{y \in \mathbb{R}} \| u - \phi_{c}(\cdot - y) \|_{H^{\sigma/2}}. \label{eq:B_000;BOst}
    \end{equation}

    Here we put $v \coloneqq u (\cdot + \tilde{y}) - \phi_{c}$.
    Then, by the Taylor expansion, we have
    \begin{align}
        S_{c}(u) &= S_{c}(u(\cdot + \tilde{y})) = S_{c}(v + \phi_{c}) \\
        &= S_{c}(\phi_{c}) + \langle S_{c}'(\phi_{c}), v \rangle + \frac{1}{2} \langle S_{c}''(\phi_{c})v,v \rangle + o(\| v \|_{H^{\sigma/2}}^{2}), \\
        M(\phi_{c}) &= M(u) = M(u(\cdot + y)) = M(\phi_{c} + v) \\
        &= M(\phi_{c}) + \langle M'(\phi_{c}), v \rangle + O(\| v \|_{H^{\sigma/2}}^{2}).
    \end{align}
    Therefore, we have it from $S_{c}'(\phi_{c}) = 0$ that
    \begin{equation}
        S_{c}(u) - S_{c}(\phi_{c}) = \frac{1}{2}\langle S_{c}''(\phi_{c})v,v \rangle + o(\| v \|_{H^{\sigma/2}}^{2}) \label{eq:B_001;BOst}
    \end{equation}
    and
    \begin{equation}
        \langle M'(\phi_{c}), v \rangle = O(\| v \|_{H^{\sigma/2}}^{2}). \label{eq:B_002;BOst}
    \end{equation}

    Next, noting that since $\| \phi_{c}(\cdot + y) \|_{L^{2}} = \| \phi_{c} \|_{L^{2}}$ holds for all $y \in \mathbb{R}$, we can see that $(\phi_{c}, \partial_{x}\phi_{c})_{L^{2}} = 0$.
    From this, we can decompose $v$ as $v = k \phi_{c} + l \partial_{x}\phi_{c} + w$ with some $k, l \in \mathbb{R}$ and some $w \in H^{\sigma/2}(\mathbb{R})$ which satisfies $(w, \phi_{c})_{L^{2}} = (w, \partial_{x}\phi_{c})_{L^{2}} = 0$.
    Then, we have
    \begin{align}
        \langle M'(\phi_{c}), v \rangle &= (\phi_{c}, k \phi_{c} + l \partial_{x}\phi_{c} + w)_{L^{2}} = k \| \phi_{c} \|_{L^{2}}^{2}. \label{eq:B_003;BOst}
    \end{align}
    Combining \eqref{eq:B_002;BOst} and \eqref{eq:B_003;BOst} yields $k = O(\| v \|_{H^{\sigma/2}}^{2})$.

    In addition, we can see that $(v, \partial_{x}\phi_{c})_{H^{\sigma/2}} = 0$.
    Indeed, we put
    \begin{equation}
        g(y) \coloneqq \| u - \phi_{c}(\cdot - y) \|_{H^{\sigma/2}}^{2} = \| u \|_{H^{\sigma/2}}^{2} - 2(u, \phi_{c}(\cdot - y))_{H^{\sigma/2}} + \| \phi_{c} \|_{H^{\sigma/2}}^{2}, \quad y\in \mathbb{R}.
    \end{equation}
    Since $\| \phi_{c}(\cdot + y) \|_{H^{\sigma/2}} = \| \phi_{c} \|_{H^{\sigma/2}}$ holds for all $y\in \mathbb{R}$, we have $(\phi_{c}, \partial_{x}\phi_{c})_{H^{\sigma/2}} = 0$. 
    This and \eqref{eq:B_000;BOst} give
    \begin{align}
        0 = \partial_{y} g(\tilde{y}) = 2(u, \partial_{x}\phi_{c}(\cdot - \tilde{y}))_{H^{\sigma/2}} &= 2 (u(\cdot + \tilde{y}), \partial_{x}\phi_{c})_{H^{\sigma/2}} \\
        &= 2(v + \phi_{c}, \partial_{x}\phi_{c})_{H^{\sigma/2}} = 2(v, \partial_{x}\phi_{c})_{H^{\sigma/2}}.
    \end{align}
    Moreover, we see that
    \begin{align}
        (v, \partial_{x}\phi_{c})_{H^{\sigma/2}} = l \| \partial_{x}\phi_{c} \|_{H^{\sigma/2}}^{2} + (w, \partial_{x}\phi_{c})_{H^{\sigma/2}}.
    \end{align}
    Therefore, we obtain
    \begin{align}
        |l| \| \partial_{x} \phi_{c} \|_{H^{\sigma/2}}^{2} \leq \| w \|_{H^{\sigma/2}} \| \partial_{x}\phi_{c} \|_{H^{\sigma/2}},
    \end{align}
    that is, $|l| \| \partial_{x}\phi_{c} \|_{H^{\sigma/2}} \leq \| w \|_{H^{\sigma/2}}$.
    Then we have
    \begin{align}
        \| v \|_{H^{\sigma/2}} &\leq |k| \| \phi_{c} \|_{H^{\sigma/2}} + |l| \| \partial_{x}\phi_{c} \|_{H^{\sigma/2}} + \| w \|_{H^{\sigma/2}} \\
        &\leq |k| \| \phi_{c} \|_{H^{\sigma/2}} + 2 \| w \|_{H^{\sigma/2}} = 2 \| w \|_{H^{\sigma/2}} + O(\| v \|_{H^{\sigma/2}}^{2}),
    \end{align}
    which implies
    \begin{equation}
        \frac{1}{4}\| v \|_{H^{\sigma/2}}^{2} + O(\| v \|_{H^{\sigma/2}}^{3}) \leq \| w \|_{H^{\sigma/2}}^{2}. \label{eq:B_004;BOst}
    \end{equation}

    Next, since $S_{c}'(\phi_{c}(\cdot + y)) = 0$ for all $y \in \mathbb{R}$, we can see that $\left. \partial_{y}\{ S_{c}'(\phi_{c}(\cdot + y)) \} \right|_{c = 0} = S_{c}''(\phi_{c}) \partial_{x}\phi_{c} = 0$.
    This yields that
    \begin{align}
        \langle S_{c}''(\phi_{c}) w, w \rangle &= \langle S_{c}''(\phi_{c})v , v \rangle - 2k \langle S_{c}''(\phi_{c})v, \phi_{c} \rangle + k^{2} \langle S_{c}''(\phi_{c}) \phi_{c}, \phi_{c} \rangle \\
        &= \langle S_{c}''(\phi_{c})v,v \rangle + O(\| v \|_{H^{\sigma/2}}^{3}). \label{eq:B_005;BOst}
    \end{align}
    Since statement (B) is assumed, there exists $C_{1} > 0$ such that
    \begin{equation}
        \langle S_{c}''(\phi_{c})w, w \rangle \geq C_{1} \| w \|_{H^{\sigma/2}}^{2}. \label{eq:B_006;BOst}
    \end{equation}
    Then, we obtain it from $M(u) = M(\phi_{c})$, \eqref{eq:B_001;BOst}, \eqref{eq:B_005;BOst}, \eqref{eq:B_006;BOst}, and \eqref{eq:B_004;BOst} that
    \begin{align}
        E(u)-E(\phi_{c}) = S_{c}(u) - S_{c}(\phi_{c}) & \geq \frac{1}{2} \langle S_{c}''(\phi_{c})v, v \rangle + o(\| v \|_{H^{\sigma/2}}^{2}) \\
        &= \frac{1}{2} \langle S_{c}''(\phi_{c})w, w \rangle + o(\| v \|_{H^{\sigma/2}}^{2}) \\
        &\geq \frac{C_{1}}{2} \| w \|_{H^{\sigma/2}}^{2} + o(\| v \|_{H^{\sigma/2}}^{2}) \\
        &\geq \frac{C_{1}}{8} \| v \|_{H^{\sigma/2}}^{2} + o(\| v \|_{H^{\sigma/2}}^{2}).
    \end{align}

    Finally, since $u \in U_{\varepsilon}(\phi_{c})$ and $v = u - \phi_{c}(\cdot - \tilde{y})$, we take $\varepsilon_{0} \in (0, \varepsilon_{1})$ so small that
    \begin{equation}
        E(u) - E(\phi_{c}) \geq \frac{C_{1}}{16} \| v \|_{H^{\sigma/2}}^{2}
    \end{equation}
    holds.
    Then, taking $C_{0} = C_{1}/16$ completes the proof.
\end{proof}

Due to the discussion above, we shall show that statement (B) holds so that travelling wave solutions are stable.
Actually, the statement (B) holds conditionally on $c > 0$ as follows:
\begin{prop} \label{prop:suff_cond_2;gkdv_st}
    \begin{enumerate}[label={\rm (\Roman*)} \ ]
        \item Assume condition {\rm (I)} in Theorem \ref{thm:stability;gkdv_st}.
        Let $\phi_{c}$ be a positive ground state solutions to \eqref{eq:Intro0300;gkdv_st} for $c > 0$.
        Then there exists $c_{0} \in (0, \infty)$ such that the statement (B) holds for all $c \in (0, c_{0})$.
        \item
        \begin{enumerate}[label={\rm (\arabic*)} \ ]
            \item Assume condition {\rm (II-1)} in Theorem \ref{thm:stability;gkdv_st}.
            Let $\phi_{c}$ be a positive ground state solution to \eqref{eq:Intro0300;gkdv_st} for $c > 0$.
            Then there exists $c_{1} \in (0, \infty)$ such that the statement (B) holds for all $c \in (c_{1}, \infty)$.

            \item Assume condition {\rm (II-2)} in Theorem \ref{thm:stability;gkdv_st}.
            Let $\phi_{c}$ be a negative ground state solutions to \eqref{eq:Intro0300;gkdv_st} for $c > 0$.
            Then there exists $c_{1} \in (0, \infty)$ such that the statement (B) holds for all $c \in (c_{2}, \infty)$.
        \end{enumerate}
    \end{enumerate}
\end{prop}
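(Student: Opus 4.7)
The plan is to reduce Proposition~\ref{prop:suff_cond_2;gkdv_st} to three spectral assertions about the linearized operator
\begin{equation}
L_c := S_c''(\phi_c) = D_x^\sigma + c - a p \phi_c^{p-1} - q \phi_c^{q-1}
\end{equation}
and then verify each in an appropriate scaling limit. Concretely, I would aim to show that (i) $L_c$ has exactly one simple negative eigenvalue on $L^2(\mathbb{R})$; (ii) $\ker L_c = \mathrm{span}(\partial_x \phi_c)$; and (iii) $(L_c^{-1} \phi_c, \phi_c)_{L^2} < 0$, where $L_c^{-1}$ is well defined on the orthogonal complement of $\partial_x\phi_c$ (which contains $\phi_c$ by parity). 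The standard Grillakis--Shatah--Strauss min--max argument then turns (i)--(iii) into an $L^2$-coercivity estimate on $\{\phi_c, \partial_x \phi_c\}^\perp$, and a routine weak-convergence argument using the spatial decay of the multiplication operator $-a p \phi_c^{p-1} - q \phi_c^{q-1}$ upgrades this to the $H^{\sigma/2}$-coercivity \eqref{eq:stability0200;gkdv_st}.

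I would handle (i)--(ii) by a scaling/perturbation argument. In case (I), set $\tilde\phi_c(y) := c^{-1/(p-1)} \phi_c(c^{-1/\sigma} y)$, so that $\tilde\phi_c$ satisfies
\begin{equation}
D_y^\sigma \tilde\phi_c + \tilde\phi_c - \tilde\phi_c^p = c^{(q-p)/(p-1)} \tilde\phi_c^q,
\end{equation}
whose right-hand side vanishes as $c \to 0^+$. Using Lemma~\ref{lem:GS_Lieb;gkdv_st} together with Lemma~\ref{eq:GS_Brezis-Lieb;gkdv_st} and the uniform $H^{\sigma/2}$ bound coming from the variational characterization of $\tilde\phi_c$, one extracts strong $H^{\sigma/2}$-convergence of $\tilde\phi_c$ to the unique positive ground state $\psi$ of the limiting equation $D^\sigma \psi + \psi - \psi^p = 0$ (existence by Weinstein, uniqueness and non-degeneracy by Frank--Lenzmann). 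The rescaled operator $\tilde L_c$ then converges in the norm-resolvent sense to $L_\infty^{(p)} := D^\sigma + 1 - p \psi^{p-1}$, whose spectrum satisfies (i)--(ii) by Frank--Lenzmann non-degeneracy, and standard perturbation theory for isolated eigenvalues (the essential spectra are uniformly separated from the discrete ones) transfers (i)--(ii) to $L_c$ for all sufficiently small $c$. Cases (II-1) and (II-2) are treated identically with the rescaling $\tilde\phi_c(y) := c^{-1/(q-1)} \phi_c(c^{-1/\sigma} y)$, the limit $c \to \infty$, and the limiting equation $D^\sigma \psi + \psi - \psi^q = 0$.

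For (iii), differentiating the stationary equation \eqref{eq:Intro0300;gkdv_st} in $c$ gives $L_c(\partial_c \phi_c) = -\phi_c$, hence $(L_c^{-1}\phi_c, \phi_c)_{L^2} = -\partial_c M(\phi_c)$, so (iii) is equivalent to $\partial_c M(\phi_c) > 0$. The rescaling produces the identity
\begin{equation}
M(\phi_c) = \tfrac{1}{2} c^{2/(p-1) - 1/\sigma} \|\tilde\phi_c\|_{L^2}^2
\end{equation}
in case (I), and the analogous formula with exponent $2/(q-1) - 1/\sigma$ in cases (II-1) and (II-2). Combined with the strong convergence $\tilde\phi_c \to \psi \neq 0$, the subcriticality assumptions $p < 2\sigma + 1$ in case (I) and $q < 2\sigma + 1$ in case (II-$j$) make this exponent strictly positive, so that $\partial_c M(\phi_c) > 0$ for $c$ small in case (I) and for $c$ large in cases (II-$j$).

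The main technical obstacle will be upgrading the weak compactness of $\tilde\phi_c$ in $H^{\sigma/2}$ to a strong-convergence statement robust enough to deliver norm-resolvent convergence of $\tilde L_c$ (needed for (i) and (ii)) together with the $C^1$-dependence of the map $c \mapsto \phi_c$ (needed to differentiate $M(\phi_c)$). This will require a concentration-compactness argument to rule out vanishing or dichotomy of the rescaled ground states, a uniform bootstrapping of Lemma~\ref{lem:GS_regularlity_of_sol;gkdv_st} yielding $c$-uniform higher-regularity bounds on $\tilde\phi_c$, and an implicit function theorem argument exploiting the Frank--Lenzmann non-degeneracy to select a $C^1$ branch $c \mapsto \phi_c$ near the endpoint of the parameter range.
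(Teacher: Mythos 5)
Your plan is a viable one, but it follows a genuinely different route from the paper. You go through the classical Grillakis--Shatah--Strauss spectral triple --- $n(L_c)=1$, $\ker L_c=\operatorname{span}\{\partial_x\phi_c\}$, and $(L_c^{-1}\phi_c,\phi_c)_{L^2}<0$ --- established by norm-resolvent convergence of the rescaled operators to the single-power linearization and eigenvalue perturbation theory. The paper never counts eigenvalues: it proves the coercivity of the \emph{limiting} quadratic form $(S_1^{0,r})''(\psi_{1,r})$ directly by a variational argument (Lemma \ref{lem:suffcond2_abstract_statement;gkdv_st}, conditions (C1)--(C5), resting on the Nehari characterization of $\psi_{1,r}$, the explicit element $v_2=\partial_c\psi_{c,r}|_{c=1}$ with $\langle M'(\psi_{1,r}),v_2\rangle>0$ under $r<2\sigma+1$, and Frank--Lenzmann non-degeneracy), and then transfers this coercivity to $S_c''(\phi_c)$ for $c$ near the endpoint by a contradiction/weak-compactness argument on normalized sequences, using only the strong convergence $\tilde\phi_c\to\psi_{1,p}$ (resp.\ $\breve\phi_c\to\psi_{1,q}$) of Lemma \ref{lem:convergence;gkdv_st}. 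The payoff of the paper's route is that it never differentiates the double-power branch $c\mapsto\phi_c$: the only family that gets differentiated in $c$ is the explicitly scaling-invariant single-power family $\psi_{c,r}(x)=c^{1/(r-1)}\psi_{1,r}(c^{1/\sigma}x)$, for which $\partial_c M(\psi_{c,r})|_{c=1}=(2/(r-1)-1/\sigma)M(\psi_{1,r})$ is exact. Your route, by contrast, leans on two points you only gesture at and which are the real work: (a) condition (iii) requires a $C^1$ branch $c\mapsto\phi_c$, which presupposes (local) uniqueness of the double-power ground state --- not established in the paper --- and must be obtained by combining the convergence lemma with an implicit-function-theorem selection and then identifying the IFT branch with the variational ground state; and (b) the inference ``the scaling exponent $2/(p-1)-1/\sigma$ is positive, hence $\partial_c M(\phi_c)>0$'' is not valid as stated, since $M(\phi_c)=\tfrac12 c^{2/(p-1)-1/\sigma}\|\tilde\phi_c\|_{L^2}^2$ also carries the term $\tfrac12 c^{2/(p-1)-1/\sigma}\,\partial_c\|\tilde\phi_c\|_{L^2}^2$; you must additionally show this term is lower order (it is, of order $c^{\alpha}$ relative to the leading one with $\alpha=(q-p)/(p-1)>0$, once the branch is $C^1$ in $c^{\alpha}$), and likewise at $c\to+\infty$ in cases (II-$j$). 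With those two points filled in, your argument closes; the paper's variational detour exists precisely to avoid having to fill them in.
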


\section{Proof of Proposition \ref{prop:suff_cond_2;gkdv_st}} \label{section:suf_cond_2;gkdv_st}
In this section, we consider the proof of Proposition \ref{prop:suff_cond_2;gkdv_st}.
The method of the proof is inspired by Fukuizumi~\cite{Fukuizumi_2003}.

First, we give a formal observation of solutions to \eqref{eq:Intro0300;gkdv_st}.
Let $\phi \in H^{\sigma/2}(\mathbb{R})$ be a solution to \eqref{eq:Intro0300;gkdv_st}.
Then we put
\begin{equation}
    \phi(x) = c^{1/(p-1)} \tilde{\phi}(c^{1/\sigma}x) \label{eq:coercivity0100;gkdv_st}
\end{equation}
so that $\tilde{\phi}$ solves
\begin{equation}
    D_{x}^{\sigma} \tilde{\phi} + \tilde{\phi} -a \tilde{\phi}^{p} - c^{\alpha} \tilde{\phi}^{q} = 0, \quad x \in \mathbb{R}, \label{eq:coercivity0200;gkdv_st}
\end{equation}
where $\alpha = (q-p)/(p-1)$.
Similarly, putting
\begin{equation}
    \phi(x) = c^{1/(q-1)} \breve{\phi} (c^{1/\sigma}x), \label{eq:coercivity0300;gkdv_st}
\end{equation}
we see that $\breve{\phi}$ solves
\begin{equation}
    D_{x}^{\sigma} \breve{\phi} + \breve{\phi} -a c^{-\beta} \breve{\phi}^{p} - \breve{\phi}^{q} = 0, \quad x \in \mathbb{R}, \label{eq:coercivity0400;gkdv_st}
\end{equation}
where $\beta = (q-p)/(q-1)$.
Then, letting $c \rightarrow +0$ in \eqref{eq:coercivity0200;gkdv_st} or $c \rightarrow + \infty$ in \eqref{eq:coercivity0400;gkdv_st}, the following equation appears:
\begin{equation}
    D_{x}^{\sigma} \psi + \psi - \psi^{r} = 0, \quad x \in \mathbb{R}, \label{eq:coercivity0500;gkdv_st}
\end{equation}
where $r \in \{p, q\} $.
This equation has been observed well, and it is known that there exists a unique, positive and even ground state solution (up to translation) belonging to $H^{\infty}(\mathbb{R})$.
For details, see Frank--Lenzmann~\cite{Frank-Lenzmann}.

Here we observe properties of ground state solutions to \eqref{eq:coercivity0500;gkdv_st}.
We define the functional $S_{1}^{0,r}$ as
\begin{equation}
    S_{1}^{0,r}(v) \coloneq \frac{1}{2} \| v \|_{H^{\sigma/2}}^{2} - \frac{1}{r+1} \int_{\mathbb{R}} v^{r+1} \, dx, \quad v \in H^{\sigma/2}(\mathbb{R}),
\end{equation}
which is the action functional corresponding to \eqref{eq:coercivity0500;gkdv_st}.
Moreover, we define the Nehari functional $K_{1}^{0, r}$ derived from $S_{1}^{0, r}$ as
\begin{equation}
    K_{1}^{0, r}(v) \coloneq \langle (S_{1}^{0, r})'(v), v \rangle = \| v \|_{H^{\sigma/2}}^{2} - \int_{\mathbb{R}} v^{r+1} \, dx.
\end{equation}
Let $\psi_{1,r} \in H^{\sigma/2}(\mathbb{R})$ be the positive ground state solution to \eqref{eq:coercivity0500;gkdv_st}.
Then we can see it in a similar way to the discussion in Section \ref{subsection:GS_existence;gkdv_st} that the following characterization holds:
\begin{equation}
    S_{1}^{0, r}(\psi_{1, r}) = \inf\{ S_{1}^{0, r}(v): v \in H^{\sigma/2}(\mathbb{R}) \setminus \{0\}, \ K_{1}^{0,r}(v) = 0 \}.
\end{equation}
Suppose that $r$ is an odd integer and put $\chi_{1, r} \coloneq  - \psi_{1, r}$.
Then we see that $\chi_{1, r}$ is a negative ground state solution to \eqref{eq:coercivity0500;gkdv_st}.
Indeed, it is clear that $S_{1}^{0, r}(\chi_{1, r}) = S_{1}^{0, r}(\psi_{1, r})$ and $K_{1}^{0, r}(\chi_{1, r}) = 0$ hold.
Otherwise, if $r$ is an even integer, we see that
\begin{equation}
    K_{1}^{0, r}(w) = \| w \|_{H^{\sigma/2}}^{2} - \int_{\mathbb{R}} w^{r+1} \, dx = \| w \|_{H^{\sigma/2}}^{2} + \int_{\mathbb{R}} |w|^{r+1} \, dx \geq 0
\end{equation}
holds for any nonpositive function $w \in H^{\sigma/2}(\mathbb{R})$.
This means that there exist no negative solutions to \eqref{eq:coercivity0500;gkdv_st}.

One of the key lemmas to prove Proposition \ref{prop:suff_cond_2;gkdv_st} is the following.
\begin{lem} \label{lem:coercivity_single;gkdv_st}
    Let $1 \leq \sigma \leq 2$, $r \in \mathbb{N}$, and $2 \leq r < \infty$.
    Moreover, let $\psi_{1, r}$ be the positive ground state solution to \eqref{eq:coercivity0500;gkdv_st}.
    Then there exists $C_{2} > 0$ such that
    \begin{equation}
        \langle (S_{1}^{0,r})''(\psi_{1, r}) v, v \rangle \geq C_{2} \| v \|_{H^{\sigma/2}}^{2}
    \end{equation}
    holds for all $v \in H^{\sigma/2}(\mathbb{R})$ satisfying $(v, \psi_{1, r})_{L^{2}} = (v, \partial_{x} \psi_{1, r})_{L^{2}} = 0$.
\end{lem}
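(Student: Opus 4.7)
The plan is to analyze the self-adjoint operator
\[
L_+ \coloneq (S_1^{0,r})''(\psi_{1,r}) = D_x^\sigma + 1 - r\psi_{1,r}^{r-1}
\]
on $L^2(\mathbb{R})$ with form domain $H^{\sigma/2}(\mathbb{R})$, and extract the coercivity from its spectral structure. Since $\psi_{1,r} \in H^\infty(\mathbb{R})$ decays at infinity, the multiplication by $r\psi_{1,r}^{r-1}$ is a relatively compact perturbation of $D_x^\sigma + 1$, so by Weyl's theorem the essential spectrum of $L_+$ equals $[1,\infty)$. The uniqueness and nondegeneracy results of Frank--Lenzmann~\cite{Frank-Lenzmann} then guarantee that $L_+$ has exactly one negative eigenvalue, which is simple with an even, positive eigenfunction $\chi_-$, and that its kernel is one-dimensional, spanned by the translation mode $\partial_x\psi_{1,r}$.

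The first step is to show that $L_+ \geq 0$ on the $L^2$-orthogonal complement of $\psi_{1,r}$. From the equation for $\psi_{1,r}$ one finds $L_+\psi_{1,r} = -(r-1)\psi_{1,r}^r$, hence
\[
\langle L_+\psi_{1,r}, \psi_{1,r}\rangle_{L^2} = -(r-1)\int_{\mathbb{R}} \psi_{1,r}^{r+1}\,dx < 0.
\]
In particular $(\chi_-, \psi_{1,r})_{L^2} \neq 0$, since otherwise $\psi_{1,r}$ would lie in the nonnegative spectral subspace of $L_+$. A Sylvester-type argument (in the spirit of Weinstein~\cite{Weinstein}) then shows that restricting the quadratic form to $\{\psi_{1,r}\}^\perp$ removes exactly the single negative mode, so $\langle L_+ v, v\rangle \geq 0$ on this subspace, with equality only when $v \in \ker L_+$. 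Imposing additionally $(v,\partial_x\psi_{1,r})_{L^2}=0$ eliminates the kernel direction, yielding strict positivity $\langle L_+ v, v\rangle > 0$ for every nonzero $v$ satisfying both orthogonality conditions.

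The second step upgrades this strict positivity to $H^{\sigma/2}$-coercivity by a contradiction argument. Suppose $(v_n) \subset H^{\sigma/2}(\mathbb{R})$ satisfies $\|v_n\|_{H^{\sigma/2}}=1$, $(v_n,\psi_{1,r})_{L^2}=(v_n,\partial_x\psi_{1,r})_{L^2}=0$, and $\langle L_+ v_n, v_n\rangle \to 0$. Extract a subsequence with $v_n \rightharpoonup v_*$ weakly in $H^{\sigma/2}(\mathbb{R})$ and $v_n \to v_*$ in $L^2_{\mathrm{loc}}(\mathbb{R})$, so that $v_*$ inherits both orthogonality conditions. Since $\psi_{1,r}^{r-1}$ is bounded and decays at infinity, a standard cutoff argument gives $\int_{\mathbb{R}} \psi_{1,r}^{r-1} v_n^2\,dx \to \int_{\mathbb{R}} \psi_{1,r}^{r-1} v_*^2\,dx$. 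Weak lower semicontinuity of the $H^{\sigma/2}$-norm then yields $\langle L_+ v_*, v_*\rangle \leq 0$, which by the previous step forces $v_* = 0$. But then
\[
\|v_n\|_{H^{\sigma/2}}^2 = \langle L_+ v_n, v_n\rangle + r\int_{\mathbb{R}} \psi_{1,r}^{r-1} v_n^2\,dx \to 0,
\]
contradicting $\|v_n\|_{H^{\sigma/2}}=1$.

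The main obstacle is the precise spectral information on $L_+$: the simplicity of its unique negative eigenvalue and the nondegeneracy $\ker L_+ = \{\lambda\,\partial_x\psi_{1,r} : \lambda \in \mathbb{R}\}$. For the local case $\sigma = 2$ these are classical Sturm--Liouville facts, but in the fractional regime $1 \leq \sigma < 2$ they rely on the deep analysis of Frank--Lenzmann~\cite{Frank-Lenzmann}, which must be invoked as a black box; once these spectral facts are in hand, the variational and compactness arguments above proceed along well-established lines.
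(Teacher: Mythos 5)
Your overall strategy is a genuinely different route from the paper's: you argue spectrally (Weyl essential spectrum, Morse index one, nondegeneracy of the kernel from Frank--Lenzmann, then an index-counting reduction on $\{\psi_{1,r}\}^{\perp}$), whereas the paper argues variationally, deriving nonnegativity of $\langle (S_1^{0,r})''(\psi_{1,r})v,v\rangle$ on the tangent space of the Nehari manifold from the minimizing property of $\psi_{1,r}$, and then transferring it to $\{v:(v,\psi_{1,r})_{L^2}=0\}$ by means of the explicit element $v_2=\partial_c\psi_{c,r}|_{c=1}=\tfrac{1}{r-1}\psi_{1,r}+\tfrac{1}{\sigma}x\partial_x\psi_{1,r}$, which solves $(S_1^{0,r})''(\psi_{1,r})v_2=-\psi_{1,r}$ (this is why the paper needs the decay estimates and Lemma \ref{lem:suffcond2_regularity_energysp;gkdv_st}). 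Your second step (the compactness/contradiction upgrade from strict positivity to $H^{\sigma/2}$-coercivity) is sound and essentially coincides with the paper's conditions \ref{cond:C3;gkdv_st}--\ref{cond:C4;gkdv_st}.

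However, your first step has a genuine gap. Knowing that $L_+$ has exactly one simple negative eigenvalue with eigenfunction $\chi_-$, that $\ker L_+=\operatorname{span}\{\partial_x\psi_{1,r}\}$, and that $(\chi_-,\psi_{1,r})_{L^2}\neq 0$ does \emph{not} imply $L_+\geq 0$ on $\{\psi_{1,r}\}^{\perp}$. A two-dimensional model already defeats this: for $L=\operatorname{diag}(-1,\tfrac12)$ and $\psi=(1,1)$ one has $\langle L\psi,\psi\rangle<0$ and $(\chi_-,\psi)\neq 0$, yet $L$ is still negative on $\psi^{\perp}$. The correct index-counting criterion is the sign of $(L_+^{-1}\psi_{1,r},\psi_{1,r})_{L^2}$: the negative direction is removed by the constraint if and only if this quantity is $\leq 0$. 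Here $L_+^{-1}\psi_{1,r}=-\partial_c\psi_{c,r}|_{c=1}$, so
\begin{equation}
    (L_+^{-1}\psi_{1,r},\psi_{1,r})_{L^2} = -\left.\partial_c \|\psi_{c,r}\|_{L^2}^{2}\right|_{c=1} = -2\left(\frac{2}{r-1}-\frac{1}{\sigma}\right)M(\psi_{1,r}),
\end{equation}
which is negative precisely when $r<2\sigma+1$. This computation is the crux of the lemma (it is where subcriticality enters, and the analogue in the paper is the verification of $\langle f,v_2\rangle>0$ in \ref{cond:C2;gkdv_st}); without it your ``Sylvester-type argument'' is unjustified, and indeed the conclusion of the lemma must fail for $r>2\sigma+1$, where the single-power travelling wave is unstable. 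So you must insert this step explicitly (together with the justification that $\partial_c\psi_{c,r}|_{c=1}$, equivalently $x\partial_x\psi_{1,r}$, lies in the right space, or else work with $L_+^{-1}\psi_{1,r}$ purely through the spectral theorem), and you should note that the hypothesis $r<2\sigma+1$ is genuinely being used.
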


\begin{rem}
    If $r$ is odd, we can replace $\psi_{1, r}$ appearing in Lemma \ref{lem:coercivity_single;gkdv_st} with the negative ground state solution $\chi_{1, r}$.
\end{rem}

Next, we observe the convergence properties of ground state solutions to \eqref{eq:Intro0300;gkdv_st}.

\begin{lem} \label{lem:convergence;gkdv_st}
    \begin{enumerate}[label={\rm (\Roman*)} \ ]
        \item Assume condition {\rm (I)} in Theorem \ref{thm:existence_of_GS;gkdv_st}.
        Let $\phi_{c}$ be a positive ground state solution to \eqref{eq:Intro0300;gkdv_st} for $c > 0$, and $\tilde{\phi}_{c}$ be a function given by the scaling \eqref{eq:coercivity0100;gkdv_st}.
        Moreover, let $\psi_{1, p}$ be the positive ground state solution to \eqref{eq:coercivity0500;gkdv_st} with $r = p$.
        Then it holds that $\tilde{\phi}_{c} \rightarrow \psi_{1, p}$ strongly in $H^{\sigma/2}(\mathbb{R})$ as $c \rightarrow +0$.

        \item
        \begin{enumerate}[label={\rm (\arabic*)} \ ]
            \item Assume condition {\rm (II-1)} in Theorem \ref{thm:existence_of_GS;gkdv_st}.
            Let $\phi_{c}$ be a positive ground state solution to \eqref{eq:Intro0300;gkdv_st} for $c > 0$, and $\breve{\phi}_{c}$ be a function given by the scaling \eqref{eq:coercivity0300;gkdv_st}.
            Moreover, let $\psi_{1, q}$ be the positive ground state solution to \eqref{eq:coercivity0500;gkdv_st} with $r = q$.
            Then it holds that $\breve{\phi}_{c} \rightarrow \psi_{1, q}$ strongly in $H^{\sigma/2}(\mathbb{R})$ as $c \rightarrow +\infty$.

            \item Assume condition {\rm (II-2)} in Theorem \ref{thm:existence_of_GS;gkdv_st}.
            Let $\phi_{c}$ be a negative ground state solution to \eqref{eq:Intro0300;gkdv_st} for $c > 0$, and $\breve{\phi}_{c}$ be a function given by the scaling \eqref{eq:coercivity0300;gkdv_st}.
            Moreover, let $\psi_{1, q}$ be the positive ground state solution to \eqref{eq:coercivity0500;gkdv_st} with $r = q$, and $\chi_{1, q} \coloneq - \psi_{1, q}$.
            Then, it holds that $\breve{\phi}_{c} \rightarrow \chi_{1, q}$ strongly in $H^{\sigma/2}(\mathbb{R})$ as $c \rightarrow +\infty$.
        \end{enumerate}
    \end{enumerate}
\end{lem}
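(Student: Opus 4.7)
The approach is to view the rescaled ground states $\tilde\phi_c$ (respectively $\breve\phi_c$) as near-minimizers of the action $S_1^{0,r}$ for the limiting single-power equation \eqref{eq:coercivity0500;gkdv_st}, to extract a weak limit via Lemma \ref{lem:GS_Lieb;gkdv_st}, and to identify it with the positive ground state $\psi_{1,r}$ by combining the variational characterization with Frank--Lenzmann's uniqueness theorem \cite{Frank-Lenzmann}. I describe case (I) in detail; cases (II-1) and (II-2) are parallel under the scaling \eqref{eq:coercivity0300;gkdv_st}, and (II-2) additionally uses the reflection $v \mapsto -v$ (legitimate since $q$ is odd) to reduce to the positive limit problem $D_x^\sigma \psi + \psi - \psi^q = 0$.

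First I would match action levels. Let $\tilde d(c)$ denote the ground state level of \eqref{eq:coercivity0200;gkdv_st} and set $d^{0,p} \coloneq S_1^{0,p}(\psi_{1,p})$. Inserting $\lambda \psi_{1,p}$ as a test function on the rescaled Nehari manifold, the unique multiplier $\lambda = \lambda(c)$ satisfies $\lambda(c) \to 1$ as $c \to +0$, which yields $\limsup_{c \to 0}\tilde d(c) \leq d^{0,p}$. Combined with the $J_c$-type identity (derived as in \eqref{eq:GS0230;gkdv_st})
\begin{equation*}
\tilde d(c) = \Bigl(\tfrac{1}{2}-\tfrac{1}{p+1}\Bigr)\|\tilde\phi_c\|_{H^{\sigma/2}}^{2} + \Bigl(\tfrac{1}{p+1}-\tfrac{1}{q+1}\Bigr)c^{\alpha}\int_{\mathbb{R}}\tilde\phi_c^{q+1}\,dx,
\end{equation*}
this gives $\sup_{c \in (0,1]}\|\tilde\phi_c\|_{H^{\sigma/2}} < \infty$, while the Nehari identity together with the Sobolev embedding yields a uniform positive lower bound on $\|\tilde\phi_c\|_{L^{p+1}}$. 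The matching bound $\liminf_{c \to 0}\tilde d(c) \geq d^{0,p}$ follows by testing $d^{0,p}$ with $\mu(c)\tilde\phi_c$, where $\mu(c) \to 1$ by the uniform bounds just established.

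For any sequence $c_n \to 0$, Lemma \ref{lem:GS_Lieb;gkdv_st} then produces $z_n \in \mathbb{R}$ and $\tilde\phi_\infty \in H^{\sigma/2}(\mathbb{R}) \setminus \{0\}$ with $\tilde\phi_{c_n}(\cdot + z_n) \rightharpoonup \tilde\phi_\infty$ weakly along a subsequence and a.e.\ pointwise. Passing to the distributional limit in the rescaled equation \eqref{eq:coercivity0200;gkdv_st}---the coefficient $c_n^\alpha$ kills the $q$-term through uniform $H^{\sigma/2}$-boundedness and the embedding $H^{\sigma/2}(\mathbb{R}) \hookrightarrow L^{q+1}(\mathbb{R})$, while the $p$-term converges by local Rellich compactness---shows that $\tilde\phi_\infty$ is a nontrivial nonnegative solution of $D_x^\sigma\psi + \psi - \psi^p = 0$, hence $\tilde\phi_\infty > 0$ by Lemma \ref{lem:GS_positiveness;gkdv_st}. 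The variational characterization of $\psi_{1,p}$ gives $S_1^{0,p}(\tilde\phi_\infty) \geq d^{0,p}$, and Step 1 together with the identity $S_1^{0,p}(\tilde\phi_{c_n}) = \tilde S_{c_n}^\alpha(\tilde\phi_{c_n}) + \tfrac{c_n^\alpha}{q+1}\int\tilde\phi_{c_n}^{q+1}\,dx$ and weak lower semicontinuity gives $S_1^{0,p}(\tilde\phi_\infty) \leq \lim_n S_1^{0,p}(\tilde\phi_{c_n}) = d^{0,p}$; hence $\tilde\phi_\infty$ is a positive ground state of \eqref{eq:coercivity0500;gkdv_st}. Frank--Lenzmann's uniqueness then identifies $\tilde\phi_\infty$ with $\psi_{1,p}$ up to translation, which is absorbed into the $z_n$'s. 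The Brezis--Lieb decomposition (Lemma \ref{eq:GS_Brezis-Lieb;gkdv_st}) applied to both $\|\cdot\|_{H^{\sigma/2}}^2$ and $\int(\cdot)^{p+1}dx$, combined with $\lim_n S_1^{0,p}(\tilde\phi_{c_n}) = S_1^{0,p}(\tilde\phi_\infty)$ and the coercivity bound from Lemma \ref{lem:GS_positivity_d;gkdv_st}-type reasoning, forces $\lim_n \|\tilde\phi_{c_n}(\cdot + z_n) - \tilde\phi_\infty\|_{H^{\sigma/2}} = 0$, which is the desired strong convergence. The main obstacle is the identification step: without the uniqueness theorem of \cite{Frank-Lenzmann}, distinct subsequences of $c \to 0$ could in principle limit to distinct positive ground states and one could only conclude precompactness modulo translation, not convergence of the full family $c \to 0$.
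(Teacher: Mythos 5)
Your overall strategy is viable and genuinely different from the paper's: you extract a nonzero weak limit via the Lieb-type lemma (Lemma \ref{lem:GS_Lieb;gkdv_st}), which introduces translation parameters $z_{n}$, pass to the limit in the rescaled equation, and invoke Frank--Lenzmann uniqueness. The paper instead exploits that $\tilde{\phi}_{c}$ can be taken even, positive, and decreasing in $|x|$, so that Lemma \ref{lem:uniform_decay_of_even_functions;gkdv_st} gives a uniform decay rate and Strauss's compactness lemma yields strong $L^{\gamma+1}$ convergence of $\tilde{\phi}_{c_{n}}$ itself, with no translations ever appearing; the ground state property of the limit is then read off purely from the variational characterizations \eqref{eq:suffcond2_2100;gkdv_st}--\eqref{eq:suffcond2_2200;gkdv_st}, without passing to the limit in the PDE. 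Your route is closer to a standard concentration-compactness argument and would generalize to settings without symmetry, but it pays for that generality at the final step.

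That final step is where you have a genuine gap. The lemma asserts $\tilde{\phi}_{c} \rightarrow \psi_{1,p}$ strongly in $H^{\sigma/2}(\mathbb{R})$, not convergence modulo translation, and this matters downstream: the contradiction argument in the proof of Proposition \ref{prop:suff_cond_2;gkdv_st} uses the orthogonality conditions $(v_{n}, \tilde{\phi}_{c_{n}})_{L^{2}} = (v_{n}, \partial_{x}\tilde{\phi}_{c_{n}})_{L^{2}} = 0$ against $\tilde{\phi}_{c_{n}}$ itself, so convergence of the untranslated family is what is actually needed. Your argument delivers $\tilde{\phi}_{c_{n}}(\cdot + z_{n}) \rightarrow \psi_{1,p}(\cdot - y_{0})$ for some sequence $(z_{n})$ and some $y_{0}$, and saying the translation ``is absorbed into the $z_{n}$'s'' does not close this: you must still show $z_{n}$ can be taken to converge to $0$. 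This is fixable, but only by invoking exactly the structure the paper uses from the start: since each $\tilde{\phi}_{c_{n}}$ is even and decreasing in $|x|$, unbounded $z_{n}$ would force the limit to be monotone on every compact set, contradicting that $\psi_{1,p}$ is even with a strict maximum at the origin; boundedness plus evenness of the limit then forces the centering at $0$. You need to say this. A second, smaller imprecision: $S_{1}^{0,p}$ is \emph{not} weakly lower semicontinuous on $H^{\sigma/2}(\mathbb{R})$ (the term $-\tfrac{1}{p+1}\int v^{p+1}\,dx$ is not weakly continuous on the whole line), so the inequality $S_{1}^{0,p}(\tilde{\phi}_{\infty}) \leq \lim_{n} S_{1}^{0,p}(\tilde{\phi}_{c_{n}})$ cannot be obtained as you state it. The correct move is to use that $\tilde{\phi}_{\infty}$, being a nontrivial solution of the limit equation, satisfies $K_{1}^{0,p}(\tilde{\phi}_{\infty}) = 0$, so that $S_{1}^{0,p}(\tilde{\phi}_{\infty}) = J_{1}^{0,p}(\tilde{\phi}_{\infty}) = \bigl(\tfrac{1}{2}-\tfrac{1}{p+1}\bigr)\|\tilde{\phi}_{\infty}\|_{H^{\sigma/2}}^{2}$, and the squared norm \emph{is} weakly lower semicontinuous; combined with $J_{1}^{0,p}(\tilde{\phi}_{c_{n}}) \rightarrow J_{1}^{0,p}(\psi_{1,p})$ this gives the upper bound you want. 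With these two repairs your proof goes through.
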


We will give the proofs of Lemmas \ref{lem:coercivity_single;gkdv_st} and \ref{lem:convergence;gkdv_st} later in this section.
Here we prove Proposition \ref{prop:suff_cond_2;gkdv_st} by applying these lemmas.

\begin{proof}[Proof of Proposition \ref{prop:suff_cond_2;gkdv_st}]
    First, We consider case (I).

    We define the action functional $\tilde{S}_{c}$ corresponding to \eqref{eq:coercivity0200;gkdv_st} as
    \begin{equation}
        \tilde{S}_{c}(v) \coloneq \frac{1}{2} \| v \|_{H^{\sigma/2}}^{2} - \frac{1}{p+1} \int_{\mathbb{R}} v^{p+1} \, dx - \frac{c^{\alpha}}{q+1} \int_{\mathbb{R}} v^{q+1} \, dx, \quad v \in H^{\sigma/2}(\mathbb{R}).
    \end{equation}

    Moreover, for $v \in H^{\sigma/2}(\mathbb{R})$ and $c > 0$, we put
    \begin{align}
        \langle L_{c}v,v \rangle &\coloneq \langle S_{c}''(\phi_{c})v, v \rangle = \| v \|_{H^{\sigma/2}_{c}}^{2} - p\int_{\mathbb{R}} \phi_{c}^{p-1}v^{2} \, dx - q \int_{\mathbb{R}} \phi_{c}^{q-1}v^{2} \, dx, \\
        \langle \tilde{L}_{c}v ,v \rangle &\coloneq \langle \tilde{S}_{c}''(\tilde{\phi}_{c})v, v \rangle = \| v \|_{H^{\sigma/2}}^{2} - p \int_{\mathbb{R}} \tilde{\phi}_{c}^{p-1} v^{2} \, dx - c^{\alpha} q \int_{\mathbb{R}} \tilde{\phi}_{c}^{q-1}v^{2} \, dx, \\
        \langle L_{1}^{0,p}v, v \rangle &\coloneq \langle (S_{1}^{0,p})''(\psi_{1, p})v, v \rangle = \| v \|_{H^{\sigma/2}}^{2} - p \int_{\mathbb{R}} \psi_{1, p}^{p-1} v^{2} \, dx.
    \end{align}
    Then we can see that $\langle L_{c}v, v \rangle = c^{1 + 2/(p-1) - 1/\sigma} \langle \tilde{L}_{c} \tilde{v}, \tilde{v} \rangle$ with the scaling $v(x) = c^{1/(p-1)} \tilde{v}(c^{1/\sigma} x)$.

    Now we prove this proposition with contradiction.
    Suppose that the statement is failed.
    Then we can take sequences $(c_{n})_{n} \subset (0, \infty)$ and $(v_{n})_{n} \subset H^{\sigma/2}(\mathbb{R})$ which satisfy the following conditions:
    \begin{align}
        & c_{n} \rightarrow 0 \ \text{as} \ n \rightarrow \infty; \label{eq:suffcond2_0100;gkdv_st} \\
        & \| v_{n} \|_{L^{2}} = 1 \ \text{for all} \ n \in \mathbb{R}; \label{eq:suffcond2_0200;gkdv_st} \\
        & \limsup_{n \rightarrow \infty} \langle \tilde{L}_{c_{n}}v_{n}, v_{n} \rangle \leq 0; \label{eq:suffcond2_0300;gkdv_st} \\
        & (v_{n}, \tilde{\phi}_{c_{n}})_{L^{2}} = (v_{n}, \partial_{x} \tilde{\phi}_{c_{n}})_{L^{2}} = 0 \ \text{for all} \ n \in \mathbb{R}. \label{eq:suffcond2_0400;gkdv_st}
    \end{align}
    By Lemma \ref{lem:convergence;gkdv_st}, we see that
    \begin{equation}
        \tilde{\phi}_{c_{n}} \rightarrow \psi_{1, p} \ \text{in} \ H^{\sigma/2}(\mathbb{R}). \label{eq:suffcond2_0600;gkdv_st}
    \end{equation}
    Additionally, by \eqref{eq:suffcond2_0200;gkdv_st}, we can see that
    \begin{equation}
        v_{n} \rightharpoonup v_{0} \ \text{weakly in} \ H^{\sigma/2}(\mathbb{R}) \label{eq:suffcond2_0700;gkdv_st}
    \end{equation}
    with some $v_{0} \in H^{\sigma/2}(\mathbb{R})$, up to a subsequence.
    By \eqref{eq:suffcond2_0400;gkdv_st}, \eqref{eq:suffcond2_0600;gkdv_st}, and \eqref{eq:suffcond2_0700;gkdv_st}, we obtain $(v_{0}, \psi_{1, r})_{L^{2}} = 0$.
    Moreover, we can see that
    \begin{equation}
        \partial_{x} \tilde{\phi}_{c_{n}} \rightarrow \partial_{x} \psi_{1, p} \ \text{strongly in} \ H^{-\sigma/2}(\mathbb{R}). \label{eq:suffcond2_0810;gkdv_st}
    \end{equation}
    Indeed, since $\sigma \geq 1$, we have
    \begin{align}
        \| \partial_{x} \tilde{\phi}_{c_{n}} - \partial_{x} \psi_{1, p} \|_{H^{-\sigma/2}} &\leq C \| \langle \xi \rangle^{-\sigma/2} \mathscr{F}[ \partial_{x} \tilde{\phi}_{c_{n}} - \partial_{x} \psi_{1, p} ] \|_{L^{2}} \\
        &\leq C \| \langle \xi \rangle^{1- \sigma/2} \mathscr{F}[\tilde{\phi}_{c_{n}} - \psi_{1, p}] \|_{L^{2}} \\
        &\leq C \| \langle \xi \rangle^{\sigma/2} \mathscr{F}[\tilde{\phi}_{c_{n}} - \psi_{1, p}] \|_{L^{2}} \\
        &\leq C \| \tilde{\phi}_{c_{n}} - \psi_{1, p} \|_{H^{\sigma/2}}
    \end{align}
    with some constant $C > 0$ independent of $n$.
    This estimate and \eqref{eq:suffcond2_0600;gkdv_st} imply \eqref{eq:suffcond2_0810;gkdv_st}.
    By \eqref{eq:suffcond2_0810;gkdv_st}, we have
    \begin{equation}
        (v_{0}, \partial_{x} \psi_{1, p})_{L^{2}} = \langle \partial_{x} \psi_{1, p}, v_{0} \rangle = \lim_{n \rightarrow \infty} \langle \partial_{x} \tilde{\phi}_{c_{n}}, v_{n} \rangle = \lim_{n \rightarrow \infty} (v_{n}, \partial_{x} \tilde{\phi}_{c_{n}})_{L^{2}} = 0.
    \end{equation}
    Therefore, we can apply Lemma \ref{lem:coercivity_single;gkdv_st} to $v_{0}$ so that we have
    \begin{equation}
        C_{2} \| v_{0} \|_{H^{\sigma/2}}^{2} \leq \langle L_{1}^{0,p} v_{0}, v_{0} \rangle. \label{eq:suffcond2_0820;gkdv_st}
    \end{equation}

    Furthermore, by \eqref{eq:suffcond2_0600;gkdv_st} and \eqref{eq:suffcond2_0700;gkdv_st}, we see that
    \begin{equation}
        \int_{\mathbb{R}} \tilde{\phi}_{c_{n}}^{\gamma-1} v_{n}^{2} \, dx \rightarrow \int_{\mathbb{R}} \psi_{1, p}^{\gamma-1}v_{0}^{2} \, dx \label{eq:suffcond2_0900;gkdv_st}
    \end{equation}
    for $\gamma \in (1, \infty)$.
    Combining \eqref{eq:suffcond2_0300;gkdv_st}, and \eqref{eq:suffcond2_0900;gkdv_st}, we obtain
    \begin{equation}
        \langle L_{1}^{0,p} v_{0}, v_{0} \rangle = \| v_{0} \|_{H^{\sigma/2}}^{2} - p \int_{\mathbb{R}} \psi_{1, p}^{p-1}v_{0}^{2} \, dx \leq \liminf_{n \rightarrow \infty} \langle \tilde{L}_{c_{n}} v_{n}, v_{n} \rangle \leq 0. \label{eq:suffcond2_1000;gkdv_st}
    \end{equation}
    Then \eqref{eq:suffcond2_0820;gkdv_st} and \eqref{eq:suffcond2_1000;gkdv_st} yield that $v_{0} = 0$.

    However, \eqref{eq:suffcond2_0200;gkdv_st} gives
    \begin{equation}
        \| v_{0} \|_{H^{\sigma/2}} \leq \liminf_{n \rightarrow \infty} \| v_{n} \|_{H^{\sigma/2}} = 1. \label{eq:suffcond2_1100;gkdv_st}
    \end{equation}
    Then \eqref{eq:suffcond2_0300;gkdv_st}, \eqref{eq:suffcond2_0900;gkdv_st}, and \eqref{eq:suffcond2_1100;gkdv_st} imply that
    \begin{equation}
        0 \geq \liminf_{n \rightarrow \infty} \langle \tilde{L}_{c_{n}}v_{n}, v_{n} \rangle \geq 1 - p \int_{\mathbb{R}} \psi_{1, p}^{p-1}v_{0}^{2} \, dx = 1,
    \end{equation}
    which is impossible.
    Hence, the proof for case (I) is accomplished.

    In case (II-$j$) ($j = 1, 2$), we put
    \begin{equation}
        \langle \breve{L}_{c}v, v \rangle \coloneq \langle \breve{S}_{c}''(\breve{\phi}_{c})v, v \rangle = \| v \|_{H^{\sigma/2}}^{2} - a c^{-\beta} p \int_{\mathbb{R}} \breve{\phi}_{c}^{p-1} v^{2}\, dx - q \int_{\mathbb{R}} \breve{\phi}_{c}^{q-1} v^{2} \, dx,
    \end{equation}
    so that we see that $\langle L_{c}v, v \rangle = c^{1 + 2/(q-1) - 1/\sigma} \langle \breve{L}_{c} \breve{v}, \breve{v} \rangle$ holds for $v \in H^{\sigma/2}(\mathbb{R})$, where $\breve{S}_{c}$ is the action functional corresponding to stationary problem \eqref{eq:coercivity0400;gkdv_st} defined as
    \begin{equation}
        \breve{S}_{c}(v) \coloneq \frac{1}{2} \| v \|_{H^{\sigma/2}}^{2} + \frac{c^{- \beta}}{p+1} \| v \|_{L^{p+1}}^{p+1} - \frac{1}{q+1} \int_{\mathbb{R}} v^{q+1} \, dx,
    \end{equation}
    and $\breve{v}$ is a function given by the scaling $v(x) = c^{1/(q-1)} \breve{v}(c^{1/\sigma} x)$.
    Then we can prove Proposition \ref{prop:suff_cond_2;gkdv_st} for case (II-$j$) similarly to case (I).
\end{proof}

\subsection{Proof of Lemma \ref{lem:coercivity_single;gkdv_st}}

First, we observe the regularity of ground state solutions \eqref{eq:coercivity0500;gkdv_st}.

\begin{lem} \label{lem:suffcond2_regularity_energysp;gkdv_st}
    Let $\psi_{1, r} \in H^{\sigma/2}(\mathbb{R})$ be the positive ground state solution to \eqref{eq:coercivity0500;gkdv_st}.
    Then $x \partial_{x}\psi_{1, r} \in H^{\sigma/2}(\mathbb{R})$ holds.
\end{lem}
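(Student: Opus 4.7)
The plan is to exploit the linear equation that $w \coloneq x\partial_{x}\psi_{1,r}$ satisfies as a consequence of the Euler--Lagrange equation \eqref{eq:coercivity0500;gkdv_st}, combined with a decay estimate for $\psi_{1,r}$ that is known from Frank--Lenzmann \cite{Frank-Lenzmann}. Since $\psi_{1,r}$ solves a single-power version of \eqref{eq:Intro0300;gkdv_st}, the regularity argument of Lemma \ref{lem:GS_regularlity_of_sol;gkdv_st} applies verbatim and gives $\psi_{1,r} \in H^{\infty}(\mathbb{R}) \subset L^{\infty}(\mathbb{R})$; thus all difficulty lies in the growth of the weight $x$, not in smoothness.

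First, I would compute the commutator $[x\partial_{x}, D_{x}^{\sigma}] = - \sigma D_{x}^{\sigma}$ via the Fourier representation (on the Fourier side $x\partial_{x}$ becomes $-1 - \xi\partial_{\xi}$, and its commutator with multiplication by $|\xi|^{\sigma}$ reduces to $-\sigma|\xi|^{\sigma}$). Applying $x\partial_{x}$ to \eqref{eq:coercivity0500;gkdv_st} and using this commutator together with $x\partial_{x}(\psi_{1,r}^{r}) = r\psi_{1,r}^{r-1} w$ yields the linear identity
\begin{equation}
    D_{x}^{\sigma} w + w - r \psi_{1,r}^{r-1} w = \sigma \bigl( \psi_{1,r}^{r} - \psi_{1,r} \bigr).
\end{equation}
The right-hand side belongs to $H^{\infty}(\mathbb{R})$, and $r\psi_{1,r}^{r-1} \in L^{\infty}(\mathbb{R})$, so this equation is ready to bootstrap regularity of $w$ from $L^{2}$.

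The key remaining task is therefore to establish $w \in L^{2}(\mathbb{R})$. For $\sigma = 2$ this is immediate from the classical exponential decay of the KdV-type soliton. For $1 \leq \sigma < 2$, the results of Frank--Lenzmann \cite{Frank-Lenzmann} give the pointwise asymptotics $\psi_{1,r}(x) = O(|x|^{-(1+\sigma)})$ as $|x| \to \infty$, and the corresponding estimate $\partial_{x}\psi_{1,r}(x) = O(|x|^{-(2+\sigma)})$ follows either directly from their analysis or by differentiating the convolution representation $\psi_{1,r} = N_{1}^{\sigma} \ast \psi_{1,r}^{r}$ (cf.\ \eqref{eq:GS0650;gkdv_st}--\eqref{eq:GS0670;gkdv_st}). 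Consequently $|x\partial_{x}\psi_{1,r}(x)|^{2} = O(|x|^{-2(1+\sigma)})$, which is integrable at infinity since $1 + \sigma > 1/2$, and the integrability near $0$ is trivial by the $C^{\infty}$ bound on $\partial_{x}\psi_{1,r}$.

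Once $w \in L^{2}(\mathbb{R})$ is in hand, the displayed equation above shows $D_{x}^{\sigma} w = -w + r\psi_{1,r}^{r-1} w + \sigma(\psi_{1,r}^{r} - \psi_{1,r}) \in L^{2}(\mathbb{R})$, hence $w \in H^{\sigma}(\mathbb{R}) \hookrightarrow H^{\sigma/2}(\mathbb{R})$, completing the proof. The main technical obstacle is the fractional decay step: invoking Frank--Lenzmann for the pointwise bound on $\partial_{x}\psi_{1,r}$ is what makes the $L^{2}$ control of the weight $x\partial_{x}\psi_{1,r}$ rigorous, and without such a decay estimate one would have to develop it independently, e.g.\ by iterating the convolution identity and tracking derivatives of the Bessel-like kernel $N_{1}^{\sigma}$.
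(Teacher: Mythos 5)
Your argument is correct, but it takes a genuinely different route from the paper's. You commute $x\partial_{x}$ through the stationary equation via $[x\partial_{x}, D_{x}^{\sigma}] = -\sigma D_{x}^{\sigma}$, obtaining a linear identity for $w = x\partial_{x}\psi_{1,r}$ whose right-hand side is $-w + r\psi_{1,r}^{r-1}w + \sigma(\psi_{1,r}^{r}-\psi_{1,r})$; once $w \in L^{2}(\mathbb{R})$ is known from the decay estimate \eqref{eq:suffcond2_3599;gkdv_st} (the same input the paper relies on, and note that the bound $\partial_{x}\psi_{1,r}(x) = O(|x|^{-(2+\sigma)})$ you invoke is exactly the content of that estimate, so you need not re-derive it from the convolution representation), the identity gives $D_{x}^{\sigma}w \in L^{2}(\mathbb{R})$ and hence $w \in H^{\sigma}(\mathbb{R}) \hookrightarrow H^{\sigma/2}(\mathbb{R})$, uniformly for $1 \leq \sigma \leq 2$. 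The paper instead, for $1 < \sigma \leq 2$, differentiates the resolvent symbol $(1+|\xi|^{\sigma})^{-1}$ on the Fourier side (Lemma \ref{lem:AppA_regularity;BOst}) and checks $x\psi_{1,r}^{r} \in H^{1-\sigma/2}(\mathbb{R})$, while for $\sigma = 1$ it switches to a real-space argument that requires a new pointwise decay bound on $\partial_{x}^{2}\psi_{1,r}$ proved with the Amick--Toland kernel (Lemma \ref{lem:suffcond2_decay_2nd;gkdv_st}). Your approach buys a slightly stronger conclusion ($w \in H^{\sigma}$ rather than $H^{\sigma/2}$) and dispenses with the separate and rather lengthy $\sigma = 1$ analysis. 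The one step you should make explicit is the distributional product rule hidden in the commutator identity, namely $\partial_{\xi}\bigl(|\xi|^{\sigma}\hat{\psi}_{1,r}\bigr) = \sigma|\xi|^{\sigma-1}\operatorname{sgn}(\xi)\,\hat{\psi}_{1,r} + |\xi|^{\sigma}\partial_{\xi}\hat{\psi}_{1,r}$: it is legitimate for all $1 \leq \sigma \leq 2$ because $|\xi|^{\sigma}$ is locally Lipschitz when $\sigma \geq 1$ and $\hat{\psi}_{1,r} \in H^{1}(\mathbb{R})$ (equivalently $x\psi_{1,r} \in L^{2}(\mathbb{R})$, which again follows from \eqref{eq:suffcond2_3599;gkdv_st}). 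This is precisely the delicacy that led the paper to abandon the Fourier-side argument at $\sigma = 1$, so a sentence justifying it is warranted; with that addition your proof is complete.
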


Here we consider the case that $1 < \sigma \leq 2$.
\begin{lem} \label{lem:AppA_regularity;BOst}
    Let $1 < \sigma \leq 2$ and $f\in H^{-\sigma/2}(\mathbb{R})$.
    Moreover, assume that $\psi \in H^{\sigma + 1}(\mathbb{R})$ satisfies
    \begin{equation}
        D_{x}^{\sigma} \psi + \psi = f \quad \text{in} \ H^{-\sigma/2}(\mathbb{R}). \label{eq:AppA_002;BOst}
    \end{equation}
    If $xf \in H^{1-\sigma/2}(\mathbb{R})$, then $x \partial_{x}\psi \in H^{\sigma/2}(\mathbb{R})$ holds.
\end{lem}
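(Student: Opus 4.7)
The plan is to multiply the equation by $x$, transfer the loss of one derivative into a commutator with $D_x^\sigma$, and then invert $D_x^\sigma + 1$ to regain $\sigma$ derivatives on $x\psi$, from which the desired regularity of $x\partial_x\psi$ follows by the Leibniz rule.

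\textbf{Step 1 (Commutator identity).} The key algebraic fact is that, in Fourier variables, multiplication by $x$ corresponds to differentiation via $\mathscr{F}[xg](\xi) = i\partial_\xi \mathscr{F}[g](\xi)$. Taking the Fourier transform of \eqref{eq:AppA_002;BOst} yields $(|\xi|^\sigma + 1)\hat{\psi}(\xi) = \hat{f}(\xi)$, and differentiating this identity in $\xi$ gives
\begin{equation*}
    (|\xi|^\sigma + 1)\, \mathscr{F}[x\psi](\xi) = \mathscr{F}[xf](\xi) - i\sigma |\xi|^{\sigma - 1}\mathrm{sgn}(\xi)\,\hat{\psi}(\xi),
\end{equation*}
which in physical space reads
\begin{equation*}
    (D_x^\sigma + 1)(x\psi) = xf + [D_x^\sigma, x]\psi \quad \text{in } \mathscr{S}'(\mathbb{R}).
\end{equation*}
Since $\psi \in H^{\sigma + 1}(\mathbb{R}) \subset L^2(\mathbb{R})$ and $f \in H^{-\sigma/2}(\mathbb{R})$, both $x\psi$ and $xf$ are well-defined as tempered distributions, so this manipulation is legitimate.

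\textbf{Step 2 (Controlling the commutator).} I would show that $[D_x^\sigma, x]\psi \in H^{1-\sigma/2}(\mathbb{R})$. Its Fourier symbol is proportional to $|\xi|^{\sigma-1}\mathrm{sgn}(\xi)\,\hat{\psi}(\xi)$, so the relevant quantity is $\langle \xi\rangle^{1-\sigma/2} |\xi|^{\sigma-1}\hat{\psi}(\xi)$. For large $|\xi|$, this weight behaves like $|\xi|^{\sigma/2}$, which is controlled since $\psi \in H^{\sigma+1}(\mathbb{R}) \hookrightarrow H^{\sigma/2}(\mathbb{R})$. For small $|\xi|$, the hypothesis $\sigma > 1$ is crucial: it guarantees that $|\xi|^{\sigma-1}$ is continuous and bounded near the origin, so the low-frequency contribution is controlled by $\|\hat{\psi}\|_{L^2}$. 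This is the main technical step; it is the point where $1 < \sigma$ (and not merely $\sigma \geq 1$) genuinely enters.

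\textbf{Step 3 (Inversion and Leibniz).} The operator $D_x^\sigma + 1$ has symbol bounded below by $1$, hence $(D_x^\sigma + 1)^{-1}$ is an isomorphism $H^s(\mathbb{R}) \to H^{s+\sigma}(\mathbb{R})$ for every $s \in \mathbb{R}$. Combining Step~1 with $xf \in H^{1-\sigma/2}(\mathbb{R})$ (hypothesis) and Step~2 gives $(D_x^\sigma + 1)(x\psi) \in H^{1-\sigma/2}(\mathbb{R})$, and therefore
\begin{equation*}
    x\psi \in H^{1 + \sigma/2}(\mathbb{R}).
\end{equation*}
Differentiating in $x$ and applying the Leibniz rule, $\partial_x(x\psi) = \psi + x\partial_x \psi \in H^{\sigma/2}(\mathbb{R})$. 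Since $\psi \in H^{\sigma+1}(\mathbb{R}) \hookrightarrow H^{\sigma/2}(\mathbb{R})$, subtracting yields $x\partial_x \psi \in H^{\sigma/2}(\mathbb{R})$, as required.

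\textbf{Main obstacle.} The delicate point is the control of the commutator $[D_x^\sigma, x]\psi$ at low frequencies in Step~2: naively one loses one derivative when multiplying by $x$, and the game is to convert this loss into the fractional smoothing factor $|\xi|^{\sigma-1}$ coming from the nonsmoothness of the symbol $|\xi|^\sigma$. The restriction $\sigma > 1$ ensures that this fractional factor is integrable near the origin without any extra decay hypothesis on $\hat{\psi}$, thereby letting the Fourier-side computation close.
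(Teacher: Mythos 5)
Your proof is correct and is essentially the same argument as the paper's: both differentiate the Fourier-side identity $(1+|\xi|^{\sigma})\hat{\psi}=\hat{f}$ in $\xi$ and close the estimate using $f\in H^{-\sigma/2}$, $xf\in H^{1-\sigma/2}$, and $\psi\in H^{\sigma/2}$, with the derivative of the symbol $|\xi|^{\sigma}$ supplying the factor $\sigma|\xi|^{\sigma-1}\operatorname{sgn}\xi$ that requires $\sigma>1$. Your packaging as a commutator identity plus inversion of $D_{x}^{\sigma}+1$ (yielding $x\psi\in H^{1+\sigma/2}$ and then $x\partial_{x}\psi\in H^{\sigma/2}$ by Leibniz) is only a cosmetic reorganization of the paper's direct estimate of $\langle\xi\rangle^{\sigma/2}\mathscr{F}[x\partial_{x}\psi]$.
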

\begin{proof}
    In this proof, we let $\hat{u}$ denote the Fourier transform of $u$.
    Here we remark that there exist some $C > 0$ such that $C \langle \xi \rangle^{\sigma} \leq 1 + |\xi|^{\sigma} \leq C \langle \xi \rangle^{\sigma}$ holds for $\xi \in \mathbb{R}$, where $\langle \xi \rangle \coloneq (1 + |\xi|^{2})^{1/2}$ for $\xi \in \mathbb{R}$.

    First, rewriting \eqref{eq:AppA_002;BOst}, we have
    \begin{equation}
        \hat{\psi} = \frac{1}{1+|\xi|^{\sigma}} \hat{f}. \label{eq:AppA_003;BOst}
    \end{equation}
    Then a direct calculation yields
    \begin{equation}
        \mathscr{F}[x \partial_{x}\psi] = i \partial_{\xi} \mathscr{F} [\partial_{x}\psi] = - \partial_{\xi}\{ \xi \hat{\psi} \} = - \hat{\psi} - \xi \partial_{\xi}\hat{\psi}. \label{eq:AppA_004;BOst}
    \end{equation}
    Moreover, we have
    \begin{align}
        \partial_{\xi} \bigg\{ \frac{1}{1 + |\xi|^{\sigma}} \hat{f} \bigg\} = -\frac{\sigma |\xi|^{\sigma-2}\xi}{(1+|\xi|^{\sigma})^{2}} \hat{f} + \frac{1}{1 + |\xi|^{\sigma}} \partial_{\xi}\hat{f}. \label{eq:AppA_005;BOst}
    \end{align}
    Additionally, it is clear that
    \begin{equation}
        \mathscr{F}[xf] = i \partial_{\xi} \hat{f}. \label{eq:AppA_006;BOst}
    \end{equation}
    Then, by \eqref{eq:AppA_003;BOst}, \eqref{eq:AppA_004;BOst}, \eqref{eq:AppA_005;BOst}, and \eqref{eq:AppA_006;BOst}, we have
    \begin{equation}
        \left| \langle \xi \rangle^{\sigma/2} \mathscr{F}[x \partial_{x}\psi] \right| \leq C \left( \langle \xi \rangle^{\sigma/2} |\hat{\psi}| + \langle \xi \rangle^{-\sigma/2} |\hat{f}| + \langle \xi \rangle^{1 - \sigma/2} |\mathscr{F}[xf]| \right)
    \end{equation}
    Since we assume that $f \in H^{-\sigma/2}(\mathbb{R})$ and $xf \in H^{1 - \sigma/2}(\mathbb{R})$, we obtain $ \langle \xi \rangle^{\sigma/2} \mathscr{F}[x \partial_{x}\psi] \in L^{2}(\mathbb{R})$, which means $x \partial_{x}\psi \in H^{\sigma/2}(\mathbb{R})$.
\end{proof}

Now we prove Lemma \ref{lem:suffcond2_regularity_energysp;gkdv_st} for $1 < \sigma \leq 2$.
\begin{proof}
    According to Lemma \ref{lem:AppA_regularity;BOst}, it suffices to show that $x \psi_{1, r}^{r} \in H^{1-\sigma/2}(\mathbb{R})$.

    When $\sigma = 2$, it is known that $\psi_{1, r}$ and $x \partial_{x} \psi_{1, r}$ decay exponentially as $|x| \rightarrow + \infty$.
    Then we can see that $x \psi_{1, r}^{r} \in H^{1-\sigma/2}(\mathbb{R})$.

    Now we consider the case $1 < \sigma < 2$.
    First, we remark that, for $1 \leq \sigma < 2$, it holds that
    \begin{equation}
        |\psi_{1, r}(x)| + |x \partial_{x} \psi_{1, r}(x)| \leq \frac{C}{1 + |x|^{1 + \sigma}} \label{eq:suffcond2_3599;gkdv_st}
    \end{equation}
    for all $x \in \mathbb{R}$ with some constant $C > 0$, which is observed by Kenig--Martel--Robibiano~\cite{Kenig-Martel-Robbiano} and Frank--Lenzmann~\cite{Frank-Lenzmann} with the method of Amick--Toland~\cite{Amick-Toland_1}.
    
    Similarly to the proof of Lemma \ref{lem:GS_regularlity_of_sol;gkdv_st}, we can see that $\psi_{1, r}^{r} \in H^{1}(\mathbb{R})$, which implies that $\psi_{1, r}^{r} \in H^{-\sigma/2}(\mathbb{R})$.

    Moreover, we can see that $|x| |\psi_{1, r}(x)|^{r} = O(|x|^{-(\sigma+1)r + 1})$ as $|x| \rightarrow + \infty$.
    Form this, it is sufficient for $x \psi_{1, r}^{r}$ to be in $L^{2}(\mathbb{R})$ that
    \begin{equation}
        -(\sigma+1)r + 1 < - \frac{1}{2} \Longleftrightarrow r > \frac{3}{2(\sigma+1)}, \label{eq:AppA_009a;BOst}
    \end{equation}
    which holds true under $r \geq 2$ and $1 \leq \sigma \leq 2$.

    Next, since $x \psi_{1, r}^{r} \in C^{1}(\mathbb{R})$, we see that
    \begin{equation}
        \left| \partial_{x} \left\{ x \psi_{1, r}^{r}(x) \right\} \right| \leq |\psi_{1,r}(x)|^{r} + r |\psi_{1, r}(x)|^{r-1}|x \partial_{x} (\psi_{1, r})(x)| = O(|x|^{-(\sigma + 1)r})
    \end{equation}
    as $|x| \rightarrow + \infty$.
    Then, similarly above, we obtain  $\partial_{x} \left\{ x \psi_{1, r}^{r} \right\} \in L^{2}(\mathbb{R})$.
    Hence, we see that $x \psi_{1, r}^{r} \in H^{1}(\mathbb{R}) \hookrightarrow H^{1 - \sigma/2}(\mathbb{R}) $ and that the statement follows from Lemma \ref{lem:AppA_regularity;BOst}.
\end{proof}

We cannot prove Lemma \ref{lem:suffcond2_regularity_energysp;gkdv_st} for $\sigma = 1$ in the same way as the case $1 < \sigma \leq 2$ since the differentiability of \eqref{eq:AppA_004;BOst} is failed.
For the case $\sigma = 1$, we observe decay estimate of the second derivative of the ground state solution $\psi_{1, r}$ with the idea of Amick--Toland~\cite{Amick-Toland_1}.

Differentiating $x \partial_{x}\psi_{1, r}$ with respect to $x$, we obtain
\begin{equation}
    \partial_{x}\{ x \partial_{x}\psi_{1, r} \} = \partial_{x} \psi_{1,r} + x \partial_{x}^{2} \psi_{1, r}.
\end{equation}
Therefore, in order to prove Lemma \ref{lem:suffcond2_regularity_energysp;gkdv_st}, it suffices to show that $x \partial_{x}\psi_{1, r} \in L^{2}(\mathbb{R})$ and $x \partial^{2}\psi_{1, r}(\mathbb{R}) \in L^{2}(\mathbb{R})$.
The first integrability follows from the decay estimate \eqref{eq:suffcond2_3599;gkdv_st}.
Now we prove the second integrability.

\begin{lem} \label{lem:suffcond2_decay_2nd;gkdv_st}
    There exists some $C > 0$ such that
    \begin{equation}
        |\partial_{x}^{2} \psi_{1, r}(x)| \leq \frac{C}{|x|^{1 + \sigma}} \label{eq:suffcond2_6000;gkdv_st}
    \end{equation}
    holds for all $|x| > 1$.
\end{lem}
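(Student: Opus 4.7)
The key observation is that for $\sigma = 1$ we have $D_{x} = H \partial_{x}$, where $H$ is the Hilbert transform, so $D_{x}^{2} = H^{2} \partial_{x}^{2} = - \partial_{x}^{2}$ on $L^{2}(\mathbb{R})$. Applying $D_{x}$ to the identity $D_{x} \psi_{1,r} = \psi_{1,r}^{r} - \psi_{1,r}$ (a rewriting of \eqref{eq:coercivity0500;gkdv_st}) yields the pointwise formula
\begin{equation}
    \partial_{x}^{2} \psi_{1,r} = \psi_{1,r}^{r} - \psi_{1,r} - D_{x}(\psi_{1,r}^{r}) = \psi_{1,r}^{r} - \psi_{1,r} - r H(\psi_{1,r}^{r-1} \partial_{x} \psi_{1,r}),
\end{equation}
so the task reduces to bounding each term on the right by $C / |x|^{2}$. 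By \eqref{eq:suffcond2_3599;gkdv_st} with $\sigma = 1$, for $|x| \geq 1$ we have $|\psi_{1,r}(x)| \leq C/|x|^{2}$ and $|\partial_{x} \psi_{1,r}(x)| \leq C/|x|^{3}$; since $r \geq 2$, in particular $|\psi_{1,r}^{r}(x)| \leq C/|x|^{2r} \leq C/|x|^{2}$.

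The remaining and main task is to prove $|Hf(x)| \leq C/|x|^{2}$ for $f \coloneq \psi_{1,r}^{r-1} \partial_{x} \psi_{1,r}$, which is smooth, odd (by evenness of $\psi_{1,r}$), and satisfies $|f(y)| \leq C(1+|y|)^{-(2r+1)}$. Since $\int_{\mathbb{R}} f = 0$ by oddness, we decompose
\begin{equation}
    \pi Hf(x) = \int_{|y|\leq |x|/2} \frac{y f(y)}{x(x-y)}\,dy + \frac{1}{x}\int_{|y|\leq|x|/2} f(y)\,dy + \mathrm{p.v.} \int_{|y| > |x|/2} \frac{f(y)}{x - y}\,dy.
\end{equation}
On the first region $|x(x-y)| \geq |x|^{2}/2$ and $\int_{\mathbb{R}}|yf| < \infty$ by the decay of $f$, giving a bound $\leq C/|x|^{2}$. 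The middle term is $O(|x|^{-(2r+1)})$ by oddness and tail integrability. In the outer region $|f(y)|\leq C/|x|^{2r+1}$; split further into $\{|y-x|\geq 1\}$ (contributing at most $C \log|x| /|x|^{2r+1}$, easily absorbed) and $\{|y-x|<1\}$.

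The near-diagonal principal value equals $\int_{|y-x|<1}(f(y)-f(x))/(x-y)\,dy$ and is thus bounded by $2 \sup_{|z-x|<1} |f'(z)|$. Since $f' = (r-1)\psi_{1,r}^{r-2}(\partial_{x}\psi_{1,r})^{2} + \psi_{1,r}^{r-1}\partial_{x}^{2}\psi_{1,r}$, and since the \emph{$L^{\infty}$ bound} $\|\partial_{x}^{2}\psi_{1,r}\|_{L^{\infty}} < \infty$ holds by Sobolev embedding and $\psi_{1,r} \in H^{\infty}(\mathbb{R})$, we get $|f'(z)| \leq C/|z|^{2(r-1)}$ for $|z|$ large, so this piece contributes $\leq C/|x|^{2(r-1)} \leq C/|x|^{2}$ (using $r \geq 2$). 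Assembling the pieces gives $|\partial_{x}^{2}\psi_{1,r}(x)| \leq C/|x|^{2}$ for $|x|$ sufficiently large; the remaining bounded range $|x|\in(1, M]$ is handled trivially by the $L^{\infty}$ bound on $\partial_{x}^{2}\psi_{1,r}$, completing the proof after adjusting the constant.

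The main obstacle is the near-diagonal piece of the principal value, where one is tempted to use pointwise decay of $\partial_{x}^{2}\psi_{1,r}$ --- precisely what is being proved. The point is that only the \emph{$L^{\infty}$ norm} of $\partial_{x}^{2}\psi_{1,r}$ enters the bound on $f'$, which keeps the argument free of circularity; this is the essence of the Amick--Toland~\cite{Amick-Toland_1} bootstrap approach.
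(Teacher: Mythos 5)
Your argument is correct, and it reaches the same bound $C|x|^{-2}$ (for $\sigma=1$) that the paper's proof does, but by a genuinely different route. The paper works with the Amick--Toland Green's function $G_{1}(x,y)=\pi^{-1}\int_{0}^{\infty}se^{-s}(x^{2}+(y+s)^{2})^{-1}\,ds$, i.e.\ the kernel of $(D_{x}+1)^{-1}$: it writes $\psi_{1,r}=G_{1}(\cdot,0)\ast\psi_{1,r}^{r}$, differentiates twice, splits the convolution at $t=x/2$, integrates by parts on the far piece to exploit $|\partial_{x}^{j}G_{1}(x,0)|\lesssim |x|^{-j}G_{1}(x,0)$, and uses only $\|\partial_{x}^{2}\psi_{1,r}\|_{L^{\infty}}$ on the near piece. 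You instead invert only the dispersive part via $D_{x}=H\partial_{x}$ and $D_{x}^{2}=-\partial_{x}^{2}$, reducing everything to a decay estimate for $H(\psi_{1,r}^{r-1}\partial_{x}\psi_{1,r})$, which you obtain from a standard principal-value decomposition using the oddness of $f=\psi_{1,r}^{r-1}\partial_{x}\psi_{1,r}$ (to kill the $x^{-1}$ term) and the mean-value theorem near the diagonal. The essential non-circularity mechanism is identical in both proofs --- the unknown quantity $\partial_{x}^{2}\psi_{1,r}$ enters only through its $L^{\infty}$ norm, in the paper via the term $\int_{x/2}^{\infty}G_{1}(x-t,0)\psi_{1,r}^{r-1}\partial_{x}^{2}\psi_{1,r}\,dt$ and in yours via $f'$ on $|z-x|<1$ --- and both yield the rate $|x|^{-2(r-1)}\le|x|^{-2}$ for $r\ge2$. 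What your version buys is that it avoids introducing $G_{1}$ and its derivative estimates, at the cost of handling a singular (non-integrable) kernel, which forces the explicit cancellation argument on the inner region; the paper's positive integrable kernel with $\int G_{1}=1$ makes the far-field term trivial but requires two integrations by parts and boundary terms. Two very minor points, neither a gap: the integral $\int_{|y|\le|x|/2}f(y)\,dy$ vanishes identically by oddness of $f$ over a symmetric interval (you do not even need tail integrability there), and on the outer region with $|y-x|\ge1$ the crude bound $|x-y|^{-1}\le1$ already gives $\int_{|y|>|x|/2}|f|\lesssim|x|^{-2r}$, so the logarithm is unnecessary.
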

\begin{proof}
    We consider the following function:
    \begin{equation}
        G_{1}(x, y) \coloneq \frac{1}{\pi} \int_{0}^{\infty} \frac{s e^{-s}}{x^{2} + (y+s)^{2}} \/ ds
    \end{equation}
    for $x \in \mathbb{R}$ and $y \geq 0$, which is introduced by Amick--Toland~\cite{Amick-Toland_1,Amick-Toland_2}.
    We can see that the function $G_{1}$ is harmonic in $\{ (x, y) \in \mathbb{R}^{2}: y > 0 \}$ and that $\int_{\mathbb{R}} G_{1}(x, y) \, dx = 1$ holds for any $y \geq 0$.
    Moreover, for $|x| > 1/2$ and $y \geq 0$, we can see that
    \begin{align}
        & |\partial_{x} G_{1}(x, y)| \leq \frac{C}{|x|}G_{1}(x, y), \label{eq:suffcond2_6500;gkdv_st} \\
        & |\partial_{x}^{2} G_{1}(x, y)| \leq \frac{C}{x^{2}}G_{1}(x, y). \label{eq:suffcond2_6600;gkdv_st}
    \end{align}
    
    As a consequence of Amick--Toland's discussion in \cite{Amick-Toland_1,Amick-Toland_2}, we can write the positive ground state solution $\psi_{1, r}$ as
    \begin{equation}
        \psi_{1, r}(x) = (G_{1}(\cdot, 0) \ast \psi_{1, r}^{r})(x) = \int_{-\infty}^{\infty} G_{1}(x-t, 0) \psi_{1, r}^{r}(t) \, dt.
    \end{equation}
    Since $\psi_{1, r} \in C^{\infty}(\mathbb{R})$, we obtain
    \begin{equation}
        \partial_{x}^{2} \psi_{1, r}(x) = \int_{-\infty}^{\infty} G_{1}(x-t, 0) \partial_{t}^{2} \{\psi_{1, r}^{r}(t)\} \, dt. \label{eq:suffcond2_6100;gkdv_st}
    \end{equation}
    Let $x > 1$.
    Then we split the right hand side of \eqref{eq:suffcond2_6100;gkdv_st} as
    \begin{equation}
        \partial_{x}^{2} \psi_{1, r}(x) = \int_{-\infty}^{x/2} G_{1}(x-t, 0) \partial_{t}^{2} \{ \psi_{1, r}^{r}(t) \} \, dt + \int_{x/2}^{\infty} G_{1}(x-t, 0) \partial_{t}^{2} \{ \psi_{1, r}^{r}(t) \} \, dt =: I_{1}(x) + I_{2}(x).
    \end{equation}
    Now we evaluate $I_{1}$.
    Doing integration by parts twice, we obtain
    \begin{equation}
        I_{1}(x) = r G_{1}\left( \frac{x}{2}, 0 \right) \psi_{1, r}^{r-1}\left( \frac{x}{2} \right) \partial_{x}\psi_{1, r} \left( \frac{x}{2} \right) + \partial_{x}G_{1}\left( \frac{x}{2}, 0 \right) \psi_{1, r}^{r}\left( \frac{x}{2} \right) + \int_{- \infty}^{x/2} \partial_{x}^{2} G_{1}(x-t, 0) \psi_{1, r}^{r}(t) \, dt.
    \end{equation}
    Then, by \eqref{eq:suffcond2_3599;gkdv_st}, \eqref{eq:suffcond2_6500;gkdv_st}, and \eqref{eq:suffcond2_6600;gkdv_st}, we have
    \begin{align}
        |I_{1}(x)| &\leq C \left( x^{-2} x^{-2(r-1)} x^{-3} + x^{-3} x^{-2r} + \int_{-\infty}^{x/2} \frac{G_{1}(x-t, 0)}{(x-t)^{2}} \psi_{1, r}^{r}(t) \, dt  \right) \\
        &\leq C \left( x^{-(2r + 3)} + x^{-2} \psi_{1, r}(x) \right) \\
        &\leq C x^{-4}. \label{eq:suffcond2_6200;gkdv_st}
    \end{align}
    Next, we consider $I_{2}$.
    A direct calculation yields
    \begin{equation}
        |I_{2}(x)| \leq r(r-1) \left| \int_{x/2}^{\infty} G_{1}(x-t, 0) \psi_{1, r}^{r-2}(t) (\partial_{x} \psi_{1, r}(t))^{2} \, dt \right| + r \left| \int_{x/2}^{\infty} G_{1}(x-t, 0) \psi_{1, r}^{p-1}(t) \partial_{x}^{2} \psi_{1, r}(t) \, dt \right|.
    \end{equation}
    By the decay estimate \eqref{eq:suffcond2_3599;gkdv_st}, we have
    \begin{equation}
        \left| \int_{x/2}^{\infty} G_{1}(x-t, 0) \psi_{1, r}^{r-2}(t) (\partial_{x} \psi_{1, r}(t))^{2} \, dt \right| \leq C x^{-2(r-1)} x^{-6} \int_{-\infty}^{\infty} G_{1}(x-t, 0) \, dt = Cx^{-(2r-4)}.
    \end{equation}
    Moreover, we see that
    \begin{align}
        \left| \int_{x/2}^{\infty} G_{1}(x-t, 0) \psi_{1, r}^{p-1}(t) \partial_{x}^{2} \psi_{1, r}(t) \, dt \right| &\leq C x^{-2(r-1)} \| \partial_{x}^{2} \psi_{1, r} \|_{L^{\infty}} \int_{-\infty}^{\infty} G_{1}(x-t, 0) \, dx \\
        &\leq C x^{-2(r-1)}.
    \end{align}
    Therefore, we obtain that
    \begin{equation}
        |I_{2}(x)| \leq C x^{-2(r-1)}. \label{eq:suffcond2_6300;gkdv_st}
    \end{equation}
    Finally, \eqref{eq:suffcond2_6200;gkdv_st} and \eqref{eq:suffcond2_6300;gkdv_st} yield that
    \begin{equation}
        |\partial_{x}^{2} \psi_{1,r}(x)| \leq C x^{-2(r-1)} \leq C x^{-2}
    \end{equation}
    holds for $x > 1$.

    Since $\psi_{1, r}$ is an even function, we conclude the desired estimate.
\end{proof}

By Lemma \ref{lem:suffcond2_decay_2nd;gkdv_st}, it holds for $|x| > 1$ that
\begin{equation}
    |x \partial_{x}^{2} \psi_{1, r}(x)| \leq C |x|^{-1},
\end{equation}
which implies that $x \partial_{x}^{2} \psi_{1, r} \in L^{2}(\mathbb{R})$.
Hence, the proof of Lemma \ref{lem:suffcond2_regularity_energysp;gkdv_st} for $\sigma = 1$ is completed.

Now we proceed the proof of Lemma \ref{lem:coercivity_single;gkdv_st}.
First, we claim an abstract lemma.
\begin{lem} \label{lem:suffcond2_abstract_statement;gkdv_st}
    Let $X$ be a Hilbert space over $\mathbb{R}$, $S, K \in C^{2}(X, \mathbb{R})$, and $f \in X^{\ast}$, where $X^{\ast}$ is the dual space of $X$.
    Let $\psi \in X \setminus \{0\}$ satisfy $K(\psi) = 0$ and
    \begin{equation}
        S(\psi) = \min \{ S(v): v \in X \setminus \{0\}, \ K(v) = 0 \}.
    \end{equation}
    Moreover, assume the following five conditions:
    \begin{enumerate}[label={\rm (C\arabic*)}]
        \item There exists $v_{1} \in X$ such that \label{cond:C1;gkdv_st}
        \begin{equation}
            \langle S'(\psi), v_{1} \rangle = 0, \quad \langle K(\psi), v_{1} \rangle \neq 0.
        \end{equation}

        \item There exists $v_{2} \in X$ such that \label{cond:C2;gkdv_st}
        \begin{equation}
            S''(\psi)v_{2} = -f, \quad \langle K'(\psi), v_{2} \rangle \neq 0, \quad \langle f, v_{2} \rangle > 0.
        \end{equation}

        \item The functional $X \ni v \mapsto \langle S''(\psi)v, v \rangle \in \mathbb{R}$ is weakly lower semicontiuous. \label{cond:C3;gkdv_st}

        \item Let $(v_{n})_{n} \subset X$ satisfy that $\| v_{n} \|_{X} = 1$ for all $n \in \mathbb{R}$ and that $v_{n} \rightharpoonup 0$ weakly in $X$.
        Then it holds that \label{cond:C4;gkdv_st}
        \begin{equation}
            \liminf_{n \rightarrow \infty} \langle S''(\psi)v_{n}, v_{n} \rangle > 0.
        \end{equation}

        \item $\langle f, v \rangle = 0$ holds for all $v \in \operatorname{ker} S''(\psi)$. \label{cond:C5;gkdv_st}

    \end{enumerate}

    Then there exists $C_{3} > 0$ such that
    \begin{equation}
        \langle S''(\psi)v, v \rangle \geq C_{3} \| v \|_{X}
    \end{equation}
    holds for all $v \in W$ satisfying $\langle f, v \rangle = 0$, where $W$ is a closed subspace of $X$ satisfying that $W \cap \operatorname{ker} S''(\psi) = \{0\}$.
\end{lem}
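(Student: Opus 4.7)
The plan is to establish the coercivity in three stages: obtain $S'(\psi) = 0$ from a first-order Lagrange-multiplier argument, prove plain non-negativity of $\langle S''(\psi)\cdot,\cdot\rangle$ on the subspace $\{v : \langle f,v\rangle = 0\}$ by an algebraic splitting along $v_2$, and finally upgrade to strict coercivity on $W$ via a weak-compactness/contradiction argument using \ref{cond:C3;gkdv_st} and \ref{cond:C4;gkdv_st}.

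First, since $\psi$ minimizes $S$ on the level set $\{K=0\}\setminus\{0\}$, the Lagrange multiplier theorem produces $\mu \in \mathbb{R}$ with $S'(\psi) = \mu K'(\psi)$. Pairing with the $v_1$ from \ref{cond:C1;gkdv_st} yields $0 = \mu \langle K'(\psi), v_1\rangle$, hence $\mu = 0$ and $S'(\psi) = 0$. The second-order necessary condition for constrained minimization along smooth curves on $\{K=0\}$ then gives
\[
\langle S''(\psi) w, w\rangle \geq 0 \quad \text{for every } w \in X \text{ with } \langle K'(\psi), w\rangle = 0.
\]

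Next, for arbitrary $v$ satisfying $\langle f, v\rangle = 0$, I would use \ref{cond:C2;gkdv_st} to decompose uniquely $v = \alpha v_2 + w$ with $\langle K'(\psi), w\rangle = 0$ and $\alpha = \langle K'(\psi), v\rangle / \langle K'(\psi), v_2\rangle$. Substituting into $\langle S''(\psi) v, v\rangle$, applying $S''(\psi) v_2 = -f$ and the symmetry of $S''(\psi)$, and using $\langle f, w\rangle = -\alpha \langle f, v_2\rangle$ (which follows from $\langle f,v\rangle = 0$), the cross terms collapse to
\[
\langle S''(\psi) v, v\rangle = \alpha^2 \langle f, v_2\rangle + \langle S''(\psi) w, w\rangle \geq 0,
\]
since both summands are non-negative by \ref{cond:C2;gkdv_st} and the previous display. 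This yields non-strict positivity on the whole subspace $\{\langle f,v\rangle=0\}$, with no use of $W$ yet.

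For the coercivity I would argue by contradiction: if no $C_3>0$ works, extract a normalized sequence $(v_n)\subset W$ with $\langle f, v_n\rangle = 0$ and $\langle S''(\psi) v_n, v_n\rangle \to 0$, and pass to a weak limit $v_\infty \in W$, using that a closed linear subspace is weakly closed. Condition \ref{cond:C4;gkdv_st} excludes $v_\infty = 0$. Otherwise \ref{cond:C3;gkdv_st} combined with the previous step forces $\langle S''(\psi) v_\infty, v_\infty\rangle = 0$, so in the decomposition $v_\infty = \alpha v_2 + w_\infty$ both non-negative summands vanish, giving $\alpha=0$ and $v_\infty = w_\infty$ tangent to the constraint. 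Then $v_\infty$ minimizes the quadratic form on the subspace $\{\langle K'(\psi),\cdot\rangle = 0\}$ at value zero; the resulting first-order condition produces $\beta \in \mathbb{R}$ with $S''(\psi) v_\infty = \beta K'(\psi)$. Pairing with $v_2$ and using the symmetry of $S''(\psi)$ together with $\langle f, v_\infty\rangle = 0$ gives $\beta \langle K'(\psi), v_2\rangle = 0$, hence $\beta = 0$ by \ref{cond:C2;gkdv_st}. Thus $v_\infty \in \ker S''(\psi) \cap W = \{0\}$, contradicting $v_\infty \neq 0$.

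The main obstacle I anticipate is a clean justification of the Lagrange-type first-order condition $S''(\psi) v_\infty = \beta K'(\psi)$ at the weak limit inside the closed subspace $\{\langle K'(\psi),\cdot\rangle = 0\}$, together with the fact that the two steps above really interlock with the role of $W$ only at the very end. Condition \ref{cond:C5;gkdv_st} functions as the compatibility requirement that makes the equation $S''(\psi) v_2 = -f$ in \ref{cond:C2;gkdv_st} solvable and prevents spurious interference of $f$ with $\ker S''(\psi)$ in the final contradiction.
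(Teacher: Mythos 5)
Your proof is correct, and while the overall architecture matches the paper's (two preliminary non-negativity lemmas, then a contradiction argument built on a normalized sequence, weak limits, \ref{cond:C3;gkdv_st} and \ref{cond:C4;gkdv_st}), your endgame is genuinely different. The paper treats the weak limit $v_{0}$ as a minimizer of the quadratic form over $\{\langle f, \cdot \rangle = 0\}$, obtains a multiplier $\mu$ with $S''(\psi)v_{0} = \mu f$, writes $v_{0} = -\mu v_{2} + w$ with $w \in \operatorname{ker}S''(\psi)$, and needs \ref{cond:C5;gkdv_st} to kill $\langle f, w\rangle$ and conclude $\mu = 0$. You instead exploit the exact identity $\langle S''(\psi)v, v\rangle = \alpha^{2}\langle f, v_{2}\rangle + \langle S''(\psi)w, w\rangle$ from your splitting along $v_{2}$: since both summands are non-negative and the total vanishes at $v_{\infty}$, you get $\alpha = 0$ for free, place $v_{\infty}$ in the tangent hyperplane $\{\langle K'(\psi),\cdot\rangle = 0\}$, take the multiplier against $K'(\psi)$ there, and kill it by symmetry together with $\langle f, v_{\infty}\rangle = \langle S''(\psi)v_{2}, v_{\infty}\rangle = 0$. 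The notable consequence is that your route never invokes \ref{cond:C5;gkdv_st} — your closing remark about its role is therefore not accurate for your own argument; it is the paper's multiplier-against-$f$ route that needs it. Your approach buys a (slightly) leaner hypothesis set; the paper's buys a shorter final computation. Two small points to tighten: your appeal to "the second-order necessary condition for constrained minimization" is exactly what the paper proves by hand via the implicit function theorem, using $v_{1}$ from \ref{cond:C1;gkdv_st} as the transversal direction (and the vanishing of $\langle S'(\psi), v_{1}\rangle$ to discard the $\gamma''(0)$ term), so you should either cite that construction or reproduce it; and the passage from "$S''(\psi)v_{\infty}$ annihilates $\operatorname{ker}K'(\psi)$" to "$S''(\psi)v_{\infty} = \beta K'(\psi)$" should note that $K'(\psi) \neq 0$, which \ref{cond:C1;gkdv_st} (or \ref{cond:C2;gkdv_st}) guarantees.
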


Now we prove Lemma \ref{lem:suffcond2_abstract_statement;gkdv_st}.
Hereafter, we let $\langle \cdot, \cdot \rangle$ denote a dual product between $X^{\ast}$ and $X$ until the proof is accomplished.

\begin{lem} \label{lem:AppB_positiveness_nehari;BOst}
    Let $S, K \in C^{2}(X, \mathbb{R})$ and assume \ref{cond:C1;gkdv_st}.
    Then $\langle S''(\psi)v, v \rangle \geq 0$ holds for all $v \in X$ satisfying $\langle K'(\psi), v \rangle = 0$.
\end{lem}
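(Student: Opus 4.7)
The plan is to exploit the minimality of $\psi$ on the Nehari-type manifold $\{K = 0\}$ in two stages: a first-order Lagrange argument that promotes the constrained criticality of $\psi$ to unconstrained criticality, followed by a second-order constrained variation along a smooth path through $\psi$ tangent to the prescribed direction.

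First, since $\psi$ is a nontrivial minimizer of $S$ on $\{v \in X \setminus \{0\}: K(v) = 0\}$, the Lagrange multiplier theorem yields $\mu \in \mathbb{R}$ with $S'(\psi) = \mu K'(\psi)$ in $X^{*}$. Pairing both sides with the direction $v_{1}$ from \ref{cond:C1;gkdv_st} and using the assumptions $\langle S'(\psi), v_{1} \rangle = 0$ and $\langle K'(\psi), v_{1} \rangle \neq 0$ forces $\mu = 0$, so $S'(\psi) = 0$. (Here I read the second relation in \ref{cond:C1;gkdv_st} as $\langle K'(\psi), v_{1}\rangle \neq 0$, the only sensible pairing.)

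Second, fix any $v \in X$ with $\langle K'(\psi), v \rangle = 0$. I would apply the implicit function theorem to
\begin{equation}
    F(t, s) \coloneq K(\psi + tv + s v_{1}), \qquad (t, s) \in \mathbb{R}^{2},
\end{equation}
near the origin. Since $F \in C^{2}$, $F(0, 0) = 0$, and $\partial_{s} F(0, 0) = \langle K'(\psi), v_{1} \rangle \neq 0$, there exists $\delta > 0$ and a $C^{2}$ function $s \colon (-\delta, \delta) \to \mathbb{R}$ with $s(0) = 0$ and $F(t, s(t)) = 0$ for $|t| < \delta$. Differentiating $F(t, s(t)) = 0$ at $t = 0$ and using $\langle K'(\psi), v \rangle = 0$ gives $s'(0) = 0$. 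Setting $\gamma(t) \coloneq \psi + t v + s(t) v_{1}$, we obtain a $C^{2}$ curve with
\begin{equation}
    \gamma(0) = \psi, \quad \gamma'(0) = v, \quad \gamma''(0) = s''(0) v_{1}, \quad K(\gamma(t)) = 0 \text{ for } |t| < \delta.
\end{equation}
By continuity and $\psi \neq 0$, shrinking $\delta$ if necessary ensures $\gamma(t) \neq 0$ throughout, so $\gamma(t)$ lies in the admissible set.

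Third, the real-valued function $g(t) \coloneq S(\gamma(t))$ has a local minimum at $t = 0$, hence $g''(0) \geq 0$. A direct computation using $S \in C^{2}(X, \mathbb{R})$ yields
\begin{equation}
    g''(0) = \langle S''(\psi) \gamma'(0), \gamma'(0) \rangle + \langle S'(\psi), \gamma''(0) \rangle = \langle S''(\psi) v, v \rangle,
\end{equation}
where the second equality uses the first step $S'(\psi) = 0$. This delivers $\langle S''(\psi) v, v \rangle \geq 0$ for every $v$ satisfying the constraint. The only delicate point is the application of the implicit function theorem in the Banach space $X$ together with the verification that $\gamma(t) \neq 0$ locally; both are standard consequences of $C^{2}$ regularity and continuity.
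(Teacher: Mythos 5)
Your proof is correct and follows essentially the same route as the paper: an implicit-function-theorem construction of a $C^{2}$ curve $t \mapsto \psi + tv + s(t)v_{1}$ inside the constraint set $\{K=0\}$ with $s'(0)=0$, followed by the second-order minimality condition $g''(0)\geq 0$. Your preliminary Lagrange-multiplier step establishing $S'(\psi)=0$ is valid but not needed — the offending term $s''(0)\langle S'(\psi), v_{1}\rangle$ already vanishes by the first relation in \ref{cond:C1;gkdv_st}, which is exactly how the paper disposes of it.
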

\begin{proof}
    Let $\Omega \subset \mathbb{R}^{2}$ be a suitable neighborhood of $(0,0) \in \mathbb{R}^{2}$ and put
    \begin{align}
        f(s, t) &\coloneqq S(\psi + sv + tv_{1}) \\
        g(s, t) &\coloneqq K(\psi + sv + tv_{1})
    \end{align}
    for $(s,t) \in \Omega$.
    We can see that $f, g \in C^{2}(\Omega)$ and that
    \begin{equation}
        g(0,0) = K(\psi) = 0, \quad \partial_{t}g(0,0) = \langle K'(\psi), v_{1} \rangle \neq 0.
    \end{equation}
    Then, by applying the implicit function theorem, there exists $\gamma \in C^{2}(-\delta, \delta)$ which satisfies $\gamma(0) = 0$ and
    $g(s, \gamma(s)) = 0$ for $s \in (-\delta, \delta)$ with some constant $\delta > 0$.
    Therefore, we can obtain
    \begin{equation}
        \partial_{s} g(0,0) + \partial_{t} g(0,0) \gamma'(0) = 0,
    \end{equation}
    which gives
    \begin{equation}
        \gamma'(0) = - \frac{\partial_{s} g(0,0)}{\partial_{t} g(0,0)} = -\frac{\langle K'(\psi), v \rangle}{\langle K'(\psi), v_{1} \rangle} = 0.
    \end{equation}
    Here we put $h(s) \coloneqq f(s, \gamma(s)) = S(\psi + sv + \gamma v_{1})$.
    Since
    $\psi + sv + \gamma(s)v_{1} \neq 0$ and $K(\psi + sv + \gamma(s)v_{1})  = 0$ hold for $s \in (-\delta, \delta)$, we see that $h$ attains its local minimum at $s = 0$.
    Therefore, we have
    \begin{align}
        0 \leq h''(0) &= \partial_{s}^{2} f(0,0) + 2 \partial_{s} \partial_{t} f(0,0) \gamma'(0) + \partial_{t}^{2} f(0,0) \gamma'(0)^{2} + \partial_{t}f(0,0) \gamma''(0) \\
        &= \langle S''(\psi)v, v \rangle + \gamma''(0) \langle S'(\psi), v_{1} \rangle = \langle S''(\psi)v, v \rangle.
    \end{align}
    This completes the proof.
\end{proof}

\begin{lem} \label{lem:AppB_positiveness_f;BOst}
    Let $S, K \in C^{2}(X, \mathbb{R})$ and assume \ref{cond:C1;gkdv_st} and \ref{cond:C2;gkdv_st}.
    Then $\langle S''(\psi)v, v \rangle \geq 0$ holds for all $v \in X$ satisfying $\langle f, v \rangle = 0$
\end{lem}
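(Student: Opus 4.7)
The plan is to reduce the statement to the preceding Lemma \ref{lem:AppB_positiveness_nehari;BOst} by perturbing $v$ with a multiple of $v_2$ so that the constraint $\langle f, v\rangle = 0$ is converted into the Nehari-type constraint $\langle K'(\psi), \cdot \rangle = 0$. Given $v \in X$ with $\langle f, v\rangle = 0$, I would set
\[
    \lambda \coloneqq -\frac{\langle K'(\psi), v \rangle}{\langle K'(\psi), v_2 \rangle}, \qquad w \coloneqq v + \lambda v_2,
\]
which is well defined because \ref{cond:C2;gkdv_st} guarantees $\langle K'(\psi), v_2\rangle \neq 0$. By construction $\langle K'(\psi), w\rangle = 0$, so Lemma \ref{lem:AppB_positiveness_nehari;BOst} yields $\langle S''(\psi)w, w\rangle \geq 0$.

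Next I would expand the quadratic form. Since $S \in C^2(X,\mathbb{R})$, the Hessian $S''(\psi)$ is a symmetric bilinear form on $X$, and using the defining identity $S''(\psi)v_2 = -f$ from \ref{cond:C2;gkdv_st} one computes
\[
    \langle S''(\psi)v, v_2\rangle = \langle S''(\psi)v_2, v\rangle = -\langle f, v\rangle = 0,
\]
by the hypothesis on $v$, and $\langle S''(\psi)v_2, v_2\rangle = -\langle f, v_2\rangle$. Expanding and using these identities,
\[
    0 \leq \langle S''(\psi)w, w\rangle = \langle S''(\psi)v, v\rangle + 2\lambda\,\langle S''(\psi)v, v_2\rangle + \lambda^{2}\langle S''(\psi)v_2, v_2\rangle = \langle S''(\psi)v, v\rangle - \lambda^{2}\langle f, v_2\rangle.
\]
Rearranging gives $\langle S''(\psi)v, v\rangle \geq \lambda^{2}\langle f, v_2\rangle \geq 0$, where the final inequality uses the positivity condition $\langle f, v_2\rangle > 0$ from \ref{cond:C2;gkdv_st}.

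The proof is essentially a one-parameter perturbation argument once the correct direction $v_2$ is identified by hypothesis \ref{cond:C2;gkdv_st}; there is no genuine analytic obstacle. The only care needed is to invoke the symmetry of $S''(\psi)$ as a bilinear form (standard for $C^2$ functionals on a Hilbert space) so that the cross term in the expansion is controlled by $\langle f, v\rangle$ and vanishes. Note that no compactness, spectral theory, or weak lower semicontinuity (condition \ref{cond:C3;gkdv_st} or \ref{cond:C4;gkdv_st}) is required at this step; those hypotheses become essential only later when upgrading from nonnegativity to strict coercivity on the closed subspace $W$ in Lemma \ref{lem:suffcond2_abstract_statement;gkdv_st}.
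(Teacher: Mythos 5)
Your proof is correct and follows essentially the same route as the paper: the paper also sets $\kappa = -\langle K'(\psi),v\rangle/\langle K'(\psi),v_{2}\rangle$, applies Lemma \ref{lem:AppB_positiveness_nehari;BOst} to $v+\kappa v_{2}$, and uses $S''(\psi)v_{2}=-f$ together with $\langle f,v\rangle=0$ and $\langle f,v_{2}\rangle>0$ to discard the cross and quadratic terms. Your explicit remark on the symmetry of the Hessian is a point the paper leaves implicit, but the argument is identical.
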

\begin{proof}
    Let $v \in X$ satisfy $\langle f, v \rangle = 0$ and set
    \begin{equation}
        \kappa \coloneq - \frac{\langle K'(\psi), v \rangle}{\langle K'(\psi), v_{2} \rangle}
    \end{equation}
    so that $\langle K'(\psi), v + \kappa v_{2} \rangle = 0$ holds.
    Then, by Lemma \ref{lem:AppB_positiveness_nehari;BOst} and \ref{cond:C2;gkdv_st}, we have
    \begin{align}
        0 &\leq \langle S''(\psi)(v + \kappa v_{2}), v + \kappa v_{2} \rangle \\
        &= \langle S''(\psi)v, v \rangle + 2\kappa \langle S''(\psi)v_{2}, v \rangle + \kappa^{2} \langle S''(\psi)v_{2}, v_{2} \rangle \\
        &= \langle S''(\psi)v, v \rangle - 2\kappa \langle f, v \rangle - \kappa^{2} \langle f, v_{2} \rangle \\
        &\leq \langle S''(\psi)v, v \rangle.
    \end{align}
    This completes the proof.
\end{proof}

Now we proceed the proof of Lemma \ref{lem:suffcond2_abstract_statement;gkdv_st}
\begin{proof}[Proof of Lemma \ref{lem:suffcond2_abstract_statement;gkdv_st}]
    We prove this lemma with contradiction.

    Suppose that the statement is failed.
    Then there exists a sequence $(v_{n})_{n} \subset X$ which satisfies that $\| v_{n} \|_{X} = 1, \ v_{n} \in W$, and $\langle f, v_{n} \rangle = 0$ for all $n \in \mathbb{N}$, and that
    \begin{equation}
        \limsup_{n \rightarrow \infty} \langle S''(\psi)v_{n}, v_{n} \rangle \leq 0. \label{eq:AppB_001;BOst}
    \end{equation}
    Since $(v_{n})_{n}$ is bounded in $X$ and $W$ is closed, up to a subsequence, we obtain $v_{n} \rightharpoonup v_{0}$ weakly in $X$ with some $v \in W$, which gives $\langle f, v_{0} \rangle = 0$.
    By \ref{cond:C3;gkdv_st} and \eqref{eq:AppB_001;BOst}, we have
    \begin{equation}
        0 \leq \langle S''(\psi)v_{0}, v_{0} \rangle \leq \liminf_{n \rightarrow \infty} \langle S''(\psi)v_{n}, v_{n} \rangle \leq 0,
    \end{equation}
    that is, $\langle S''(\psi)v_{0}, v_{0} \rangle = 0$.
    Moreover, we can see that $v_{0} \neq  0$ from \ref{cond:C4;gkdv_st}.
    Therefore, by Lemma \ref{lem:AppB_positiveness_f;BOst}, we obtain
    \begin{equation}
        \langle S''(\psi)v_{0}, v_{0} \rangle = 0 = \min \{ \langle S''(\psi)v, v \rangle : v \in X \setminus \{ 0 \}, \langle f, v \rangle = 0 \}.
    \end{equation}
    Then, there exists a Lagrange multiplier $\mu \in \mathbb{R}$ such that $S''(\psi)v_{0} = \mu f$.
    Using $S''(\psi)v_{2} = -f$ in \ref{cond:C2;gkdv_st}, we see that $S''(\psi)(v_{0} + \mu v_{2}) = 0$, which implies that there exists $w \in \operatorname{ker} S''(\psi)$ which satisfies $v_{0} = - \mu v_{2} + w$.
    By \ref{cond:C5;gkdv_st}, we obtain
    \begin{equation}
        0 = \langle f, v_{0} \rangle = \langle f, -\mu v_{2} + w \rangle = -\mu \langle f, v_{2} \rangle.
    \end{equation}
    Combining this with $\langle f, v_{2} \rangle > 0$, we see that $\mu = 0$, which gives $v_{0} = w$.
    This means that $0 \neq v_{0} \in W \cap \operatorname{ker} S''(\psi) = \{ 0 \}$, which is impossible.

    Hence, the proof is completed.
\end{proof}

Finally, we prove Lemma \ref{lem:coercivity_single;gkdv_st} by applying Lemma \ref{lem:suffcond2_abstract_statement;gkdv_st}.
\begin{proof}
    First, we show that the functionals $S_{1}^{0, r}$ and $K_{1}^{0, r}$ satisfy the condition \ref{cond:C1;gkdv_st}--\ref{cond:C5;gkdv_st} with $X = H^{\sigma/2}(\mathbb{R})$ and $f = M'(\psi_{1, r}) = \psi_{1, r}$.

    \ref{cond:C1;gkdv_st}:
    We can take $v_{1} = \psi_{1, r}$.
    Indeed, since $\psi_{1, r}$ solves \eqref{eq:coercivity0500;gkdv_st}, we immediately see that $\langle (S_{1}^{0, r})'(\psi_{1, r}), \psi_{1, r} \rangle = K_{1}^{0, r}(\psi_{1, r}) = 0$.
    Moreover, considering the graph of the function $(0, \infty) \ni \lambda \mapsto K_{1}^{0, r}(\lambda \psi_{1, r})$, we see that
    \begin{equation}
        \langle (K_{1}^{0, r})'(\psi_{1, r}), \psi_{1, r} \rangle = \left. \partial_{\lambda} K_{1}^{0, r}(\lambda \psi_{1, r}) \right|_{\lambda = 1} < 0.
    \end{equation}

    \ref{cond:C2;gkdv_st}: We introduce the following stationary problem:
    \begin{equation}
        D_{x}^{\sigma} \psi + c \psi - \psi^{r} = 0, \quad x \in \mathbb{R}, \label{eq:suffcond2_3000;gkdv_st}
    \end{equation}
    where $c > 0$.
    Here we put
    \begin{align}
        & S_{c}^{0, r}(v) = \frac{1}{2} \| v \|_{H^{\sigma/2}_{c}}^{2} - \frac{1}{r+1} \int_{\mathbb{R}} v^{r+1} \, dx, \\
        & K_{c}^{0, r}(v) = \langle (S_{c}^{0, r})'(v), v \rangle,
    \end{align}
    which are the action functional and the Nehari functional corresponding to \eqref{eq:suffcond2_3000;gkdv_st}, respectively.
    Let $\psi_{c, r}$ be the positive ground state solution to \eqref{eq:suffcond2_3000;gkdv_st}.
    Then, we can see that
    \begin{equation}
        \psi_{c, r}(x) = c^{1/(r-1)}\psi_{1, r}(c^{1/\sigma} x). \label{eq:suffcond2_3100;gkdv_st}
    \end{equation}
    Differentiating this with respect to $c$, we obtain
    \begin{equation}
        \left. \partial_{c} \psi_{c, r} \right|_{c = 1} = \frac{1}{r-1} \psi_{1, r} + \frac{1}{\sigma} x \partial_{x}\psi_{1, r}.
    \end{equation}
    Then $\left. \partial_{c} \psi_{c, r} \right|_{c = 1} \in H^{\sigma/2}(\mathbb{R})$ follows from Lemma \ref{lem:suffcond2_regularity_energysp;gkdv_st}.

    Moreover, since $(S_{c}^{0, r})'(\psi_{c}^{0, r}) = 0$ for all $c > 0$, we have
    \begin{align}
        0 = \partial_{c} (S_{c}^{0, r})'(\psi_{c, r}) = (S_{c}^{0, r})''(\psi_{c, r}) \partial_{c} \psi_{c, r} + \psi_{c, r}.
    \end{align}
    Then we obtain $\left. (S_{1}^{0, r})''(\psi_{1, r})\partial_{c} \psi_{c, r} \right|_{c = 1} = - \psi_{1, r}$.

    Additionally, using \eqref{eq:suffcond2_3100;gkdv_st}, we have
    \begin{equation}
        M(\psi_{c, r}) = c^{2/(r-1) - 1/\sigma}M(\psi_{1, r}).
    \end{equation}
    Then we obtain
    \begin{equation}
        \langle M'(\psi_{1, r}), \left. \partial_{c} \psi_{c, r} \right|_{c = 1} \rangle = \left. \partial_{c} M(\psi_{c, r}) \right|_{c=1} = \left( \frac{2}{r-1} - \frac{1}{\sigma} \right) M(\psi_{1, r}).
    \end{equation}
    Therefore, if $r < 2 \sigma + 1$, then $\langle M'(\psi_{1, r}), \left. \partial_{c}\psi_{c, r} \right|_{c = 1} \rangle > 0$.

    Finally, since $K_{c}^{0, r}(\psi_{c, r}) = 0$ holds for all $c > 0$, we have
    \begin{equation}
        0 = \left. \partial_{c} K_{c}^{0, r}(\psi_{c, r}) \right|_{c = 1} = \langle (K_{1}^{0, r})'(\psi_{1, r}), \left. \partial_{c} \psi_{c, r} \right|_{c = 1} \rangle + \| \psi_{1, r} \|_{L^{2}}^{2},
    \end{equation}
    which gives that $\langle (K_{1}^{0, r})'(\psi_{1, r}), \left. \partial_{c} \psi_{c, r} \right|_{c = 1} \rangle \neq 0$.

    Hence, we take $v_{2} = \left. \partial_{c} \psi_{c, r} \right|_{c=1}$ to conclude the proof of \ref{cond:C2;gkdv_st}.

    \ref{cond:C3;gkdv_st}: Let $(v_{n})_{n} \in H^{\sigma/2}(\mathbb{R})$ satisfy $v_{n} \rightharpoonup v$ weakly in $H^{\sigma/2}(\mathbb{R})$ with some $v \in H^{\sigma/2}(\mathbb{R})$.
    Since $\psi_{1, r}(x) \rightarrow 0$ as $|x| \rightarrow +\infty$,
    we can see that
    \begin{equation}
        \int_{\mathbb{R}} \psi_{1, r}^{r-1} v_{n}^{2} \, dx \rightarrow \int_{\mathbb{R}} \psi_{1, r}^{r-1} v^{2} \, dx. \label{eq:suffcond2_3200;gkdv_st}
    \end{equation}
    This convergence and the Fatou lemma yield that
    \begin{equation}
        \langle (S_{1}^{0, r})''(\psi_{1, r})v, v \rangle \leq \liminf_{n \rightarrow \infty} \langle (S_{1}^{0, r})''(\psi_{1, r})v_{n}, v_{n} \rangle,
    \end{equation}
    which implies the weak lower semicontinuity.

    \ref{cond:C4;gkdv_st}: We can easily see that \ref{cond:C4;gkdv_st} holds from \eqref{eq:suffcond2_3200;gkdv_st}.

    \ref{cond:C5;gkdv_st}: First, it is known that $\operatorname{ker}(S_{1}^{0, r})''(\psi_{1, r}) = \operatorname{span} \{ \partial_{x} \psi_{1, r} \}$ holds.
    For details, see Frank--Lenzman~\cite{Frank-Lenzmann}.

    Since $M(\psi_{1, r}(\cdot + y)) = M(\psi_{1, r})$ holds for all $y \in \mathbb{R}$, we have
    \begin{equation}
        0 = \left. \partial_{y} M(\psi_{1, r}(\cdot + y)) \right|_{y = 0} = \langle M'(\psi_{1, r}), \partial_{x} \psi_{1, r} \rangle = (\psi_{1, r}, \partial_{x} \psi_{1, r})_{L^{2}}.
    \end{equation}
    Then $\langle M'(\psi_{1, r}), v \rangle = 0$ holds for all $v \in \operatorname{ker}(S_{1}^{0,r})''(\psi_{1, r})$.

    Next we introduce a suitable subspace $W$ of $H^{\sigma/2}(\mathbb{R})$.
    We put
    \begin{align}
        W &\coloneq \{ v \in H^{\sigma/2}(\mathbb{R}): (v, \partial_{x} \psi_{1, r})_{L^{2}} = 0 \} \\
        &= \{ v \in H^{\sigma/2}(\mathbb{R}): (v, w)_{L^{2}} = 0 \ \text{for all} \ w \in \operatorname{ker}(S_{1, r})''(\psi_{1, r}) \}.
    \end{align}
    Then we can see that $W$ is a subspace of $H^{\sigma/2}(\mathbb{R})$ and satisfies $W \cap \operatorname{ker}(S_{1}^{0, r})''(\psi_{1, r}) = \{0\}$.
    Therefore, by Lemma \ref{lem:suffcond2_abstract_statement;gkdv_st}, there exists $C_{2} > 0$ such that
    \begin{equation}
        \langle (S_{1}^{0, r})''(\psi_{1, r}) v, v \rangle \geq C_{2} \| v \|_{H^{\sigma/2}}^{2} \label{eq:suffcond2_3300;gkdv_st}
    \end{equation}
    holds for all $v \in W$ satisfying $\langle M'(\psi_{1, r}), v \rangle = 0$.
    Namely, \eqref{eq:suffcond2_3300;gkdv_st} holds for all $v \in H^{\sigma/2}(\mathbb{R})$ satisfying $(v, \psi_{1, r})_{L^{2}} = (v, \partial_{x} \psi_{1, r})_{L^{2}} = 0$.

    Hence, the proof is accomplished.
\end{proof}

\subsection{Proof of Lemma \ref{lem:convergence;gkdv_st}}
In this subsection, we prove the convergence properties of ground state solutions to \eqref{eq:Intro0300;gkdv_st}.

First, we consider case (I), where $a = +1$ and $q$ is odd.
Here we define the Nehari functional $\tilde{K}_{c}$ derived from $\tilde{S}_{c}$ as
\begin{equation}
    \tilde{K}_{c}(v) \coloneq \langle \tilde{S}_{c}'(v), v \rangle = \| v \|_{H^{\sigma/2}}^{2} - \int_{\mathbb{R}} v^{p+1} \, dx - c^{\alpha} \| v \|_{L^{q+1}}^{q+1}
\end{equation}
for $v \in H^{\sigma/2}(\mathbb{R})$.
Similarly to \eqref{eq:GS0230;gkdv_st}, we put
\begin{align}
    \tilde{J}_{c}(v) & \coloneq \tilde{S}_{c}(v) - \frac{1}{p+1} \tilde{K}_{c}(v) \\
    &= \left( \frac{1}{2} - \frac{1}{p+1} \right) \| v \|_{H^{\sigma/2}}^{2} + c^{\alpha} \left( \frac{1}{p+1} - \frac{1}{q+1} \right) \| v \|_{L^{q+1}}^{q+1}.
\end{align}
Then we can see that the following characterizations are equivalent:
\begin{align}
    \tilde{S}_{c}(\tilde{\phi}_{c}) &= \inf \{\tilde{S}_{c}(v): v \in H^{\sigma/2}(\mathbb{R}) \setminus \{0\}, \ \tilde{K}_{c}(v) = 0 \}, \\
    &= \inf \{ \tilde{J}_{c}(v): v \in H^{\sigma/2}(\mathbb{R}) \setminus \{0\}, \ \tilde{K}_{c}(v) = 0 \}, \\
    &= \inf \{ \tilde{J}_{c}(v): v \in H^{\sigma/2}(\mathbb{R}) \setminus \{0\}, \ \tilde{K}_{c}(v) \leq 0 \}, \label{eq:suffcond2_2100;gkdv_st}
\end{align}
where $\phi_{c}$ is a positive ground state solution to \eqref{eq:Intro0300;gkdv_st} and $\tilde{\phi_{c}}$ is a function obtained by the scaling \eqref{eq:suffcond2_0100;gkdv_st}.
Similar characterizations of the ground state solution hold for the stationary problem \eqref{eq:coercivity0500;gkdv_st}.
Namely, we put
\begin{equation}
    J_{1}^{0, r}(v) \coloneq S_{1}^{0, r}(v) - \frac{1}{r+1} K_{1}^{0, r}(v) = \left( \frac{1}{2} - \frac{1}{r+1} \right) \| v \|_{H^{\sigma/2}}^{2}.
\end{equation}
Then, we can see that
\begin{align}
    S_{1}^{0,r}(\psi_{1, r}) &= \inf \{ S_{1}^{0,r}(v): v \in H^{\sigma/2}(\mathbb{R}), \ K_{1}^{0,r}(v) = 0 \}, \\
    &= \inf \{ J_{1}^{0,r}(v): v \in H^{\sigma/2}(\mathbb{R}), \ K_{1}^{0,r}(v) = 0 \}, \\
    &= \inf \{ J_{1}^{0,r}(v): v \in H^{\sigma/2}(\mathbb{R}), \ K_{1}^{0,r}(v) \leq 0 \}, \label{eq:suffcond2_2200;gkdv_st}
\end{align}
where $\psi_{1, r}$ is the positive ground state solution to \eqref{eq:suffcond2_0600;gkdv_st} and $r \in \{p, q\}$.
The first and second equalities in \eqref{eq:suffcond2_2100;gkdv_st} and \eqref{eq:suffcond2_2200;gkdv_st} immediately holds.
The third equalities in \eqref{eq:suffcond2_2100;gkdv_st} and \eqref{eq:suffcond2_2200;gkdv_st} can be shown via a similar discussion in Lemma \ref{lem:GS_ineq_IJ;gkdv_st}.

\begin{lem} \label{lem:boundedness_of_seq:gkdv_st}
    Assume condition {\rm (I)} in Theorem \ref{thm:existence_of_GS;gkdv_st}.
    Then there exists $M_{1} > 0$ such that $\| \tilde{\phi}_{c} \|_{H^{\sigma/2}} \leq M_{1}$ for all $c > 0$.
\end{lem}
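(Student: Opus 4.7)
The plan is to apply the sub-Nehari characterization of $\tilde{\phi}_{c}$ from \eqref{eq:suffcond2_2100;gkdv_st} against a fixed $c$-independent test profile. Since $q$ is odd in case (I), $q+1$ is even, and hence $\int_{\mathbb{R}} v^{q+1}\,dx \geq 0$ for every real-valued $v \in H^{\sigma/2}(\mathbb{R})$. The explicit formula for $\tilde{J}_{c}$ therefore yields the one-sided bound
\[
    \tilde{J}_{c}(\tilde{\phi}_{c}) \geq \left(\tfrac{1}{2} - \tfrac{1}{p+1}\right) \|\tilde{\phi}_{c}\|_{H^{\sigma/2}}^{2},
\]
which reduces the lemma to a uniform upper bound on $\tilde{J}_{c}(\tilde{\phi}_{c})$.

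For that upper bound, the natural competitor is $\psi_{1,p}$, the positive ground state of the limiting equation \eqref{eq:coercivity0500;gkdv_st} with $r=p$. Since $\psi_{1,p}$ satisfies the limiting Nehari identity $\|\psi_{1,p}\|_{H^{\sigma/2}}^{2} = \int \psi_{1,p}^{p+1}\,dx$, a direct calculation gives
\[
    \tilde{K}_{c}(\psi_{1,p}) = - c^{\alpha} \int_{\mathbb{R}} \psi_{1,p}^{q+1}\,dx < 0,
\]
where the strict sign uses $\psi_{1,p} > 0$ together with $q+1$ even. Hence $\psi_{1,p}$ is admissible in \eqref{eq:suffcond2_2100;gkdv_st}, and I would plug it in to obtain
\[
    \tilde{J}_{c}(\tilde{\phi}_{c}) \leq \tilde{J}_{c}(\psi_{1,p}) = \left(\tfrac{1}{2} - \tfrac{1}{p+1}\right) \|\psi_{1,p}\|_{H^{\sigma/2}}^{2} + c^{\alpha} \left(\tfrac{1}{p+1} - \tfrac{1}{q+1}\right) \|\psi_{1,p}\|_{L^{q+1}}^{q+1}.
\]
Combined with the previous lower bound, this yields the desired $M_{1}$ uniformly over any bounded range of $c$; since the lemma is only applied in the regime $c \to +0$ of Lemma \ref{lem:convergence;gkdv_st}(I), restricting to $c \in (0,1]$ suffices.

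The only non-cosmetic step is justifying the third equality in \eqref{eq:suffcond2_2100;gkdv_st}, that is, that $\inf\{\tilde{J}_{c}(v): v \neq 0,\ \tilde{K}_{c}(v) \leq 0\}$ coincides with the infimum on $\tilde{K}_{c}(v) = 0$. This is the direct analogue of Lemma \ref{lem:GS_ineq_IJ;gkdv_st}(i) transplanted to the scaled functional pair $(\tilde{J}_{c}, \tilde{K}_{c})$: given any $v$ with $\tilde{K}_{c}(v) < 0$, the map $\lambda \mapsto \tilde{K}_{c}(\lambda v)$ is positive for small $\lambda > 0$ and negative at $\lambda = 1$ (use again that $\int v^{q+1}\,dx \geq 0$ so neither nonlinear term obstructs the graph argument), so it vanishes at some $\lambda_{0} \in (0,1)$, while $\tilde{J}_{c}(\lambda v)$ is strictly increasing in $\lambda$ by the same sign observation. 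Once this rescaling argument is in place, the entire proof is a one-line competitor estimate, and no genuinely new ingredient beyond the variational setup from Section \ref{subsection:GS_existence;gkdv_st} is required.
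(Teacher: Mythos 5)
Your proof is correct and follows essentially the same route as the paper: test the sub-level characterization \eqref{eq:suffcond2_2100;gkdv_st} with the fixed competitor $\psi_{1,p}$, whose Nehari identity gives $\tilde{K}_{c}(\psi_{1,p})=-c^{\alpha}\|\psi_{1,p}\|_{L^{q+1}}^{q+1}<0$, then bound $\|\tilde{\phi}_{c}\|_{H^{\sigma/2}}^{2}$ by $\tilde{J}_{c}(\tilde{\phi}_{c})$ from below using $q+1$ even. Your explicit restriction to $c\in(0,1]$ (where the $c^{\alpha}$ term stays bounded) is in fact slightly more careful than the paper's statement ``for all $c>0$'', and is all that Lemma \ref{lem:convergence;gkdv_st} requires.
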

\begin{proof}
    By $K_{1}^{0,p}(\psi_{1,p}) = 0$, we see that
    \begin{align}
        \tilde{K}_{c}(\psi_{1, p}) &= \| \psi_{1, p} \|_{H^{\sigma/2}}^{2} - \int_{\mathbb{R}} \psi_{1, p}^{p+1} \, dx - c^{\alpha} \| \psi_{1, p} \|_{L^{q+1}}^{q+1} = - c^{\alpha} \| \psi_{1, p} \|_{L^{q+1}}^{q+1} < 0
    \end{align}
    holds for all $c > 0$.
    Combining this with \eqref{eq:suffcond2_2100;gkdv_st}, we obtain
    \begin{equation}
        \tilde{J}_{c}(\tilde{\phi}_{c}) \leq \tilde{J}_{c}(\psi_{1,p}) = \left( \frac{1}{2} - \frac{1}{p+1} \right) \| \psi_{1, p} \|_{H^{\sigma/2}}^{2} + c^{\alpha} \left( \frac{1}{p+1} - \frac{1}{q+1} \right) \| \psi_{1, p} \|_{L^{q+1}}^{q+1}. \label{eq:suffcond2_2210;gkdv_st}
    \end{equation}
    Since the right hand side of \eqref{eq:suffcond2_2210;gkdv_st} converges to $J_{1}^{0, p}(\psi_{1, p})$ as $c \rightarrow +0$, there exists $M_{0} > 0$ such that $\tilde{J}_{c}(\tilde{\phi}_{c}) \leq M_{0}$ holds for all $c > 0$.
    Finally, by the definition of $\tilde{J}_{c}$, we find some constant $C > 0$ independent of $c$ such that
    \begin{equation}
        \| \tilde{\phi}_{c} \|_{H^{\sigma/2}}^{2} \leq C \tilde{J}_{c}(\tilde{\phi}_{c}) \leq C M_{0}.
    \end{equation}
    Then, replacing $(CM_{0})^{1/2}$ with $M_{1}$ concludes the proof.
\end{proof}

\begin{lem} \label{lem:suffcond2_Nehari_est;gkdv_st}
    Assume condition {\rm (I)} in Theorem \ref{thm:existence_of_GS;gkdv_st}.
    Then, for any $\mu > 1$, there exists $\tilde{c} \in (0, \infty)$ such that $K_{1}^{0, p}(\mu \tilde{\phi}_{c}) < 0$ holds for all $c \in (0, \tilde{c})$.
\end{lem}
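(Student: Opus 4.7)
The plan is to use the Nehari identity $\tilde{K}_{c}(\tilde{\phi}_{c})=0$ to rewrite $K_{1}^{0,p}(\mu \tilde{\phi}_{c})$ in a form where the sign is manifest in the limit $c\to +0$. Specifically, using
\begin{equation}
    \|\tilde{\phi}_{c}\|_{H^{\sigma/2}}^{2} = \int_{\mathbb{R}} \tilde{\phi}_{c}^{p+1}\,dx + c^{\alpha} \int_{\mathbb{R}} \tilde{\phi}_{c}^{q+1}\,dx,
\end{equation}
a direct substitution gives
\begin{equation}
    K_{1}^{0,p}(\mu \tilde{\phi}_{c}) = -\mu^{2}(\mu^{p-1}-1) \int_{\mathbb{R}} \tilde{\phi}_{c}^{p+1}\,dx + \mu^{2} c^{\alpha} \int_{\mathbb{R}} \tilde{\phi}_{c}^{q+1}\,dx.
\end{equation}
Since $\mu>1$ and $p\geq 2$, the coefficient $-\mu^{2}(\mu^{p-1}-1)$ is a strictly negative constant, so the task reduces to showing that the first (negative) term dominates the second (positive) one for small $c$.

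For the second term, Lemma \ref{lem:boundedness_of_seq:gkdv_st} and the Sobolev embedding give a bound $\int_{\mathbb{R}} \tilde{\phi}_{c}^{q+1}\,dx \leq C M_{1}^{q+1}$ uniform in $c$, and since $\alpha=(q-p)/(p-1)>0$, the factor $c^{\alpha}$ sends this term to $0$ as $c\to +0$. The main substance of the proof is then to obtain a uniform lower bound $\int_{\mathbb{R}} \tilde{\phi}_{c}^{p+1}\,dx \geq \delta > 0$ for all sufficiently small $c$. My plan is to first get a lower bound on $\|\tilde{\phi}_{c}\|_{H^{\sigma/2}}$: applying the Sobolev embedding to the Nehari identity yields
\begin{equation}
    1 \leq C\bigl(\|\tilde{\phi}_{c}\|_{H^{\sigma/2}}^{p-1} + c^{\alpha}\|\tilde{\phi}_{c}\|_{H^{\sigma/2}}^{q-1}\bigr),
\end{equation}
and using the uniform upper bound $\|\tilde{\phi}_{c}\|_{H^{\sigma/2}}\leq M_{1}$ to absorb the second summand into a term of size $O(c^{\alpha})$ forces $\|\tilde{\phi}_{c}\|_{H^{\sigma/2}} \geq \delta_{0}$ for some $\delta_{0}>0$ once $c$ is small enough. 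Plugging this back into the Nehari identity and subtracting the $O(c^{\alpha})$ contribution from $\int \tilde{\phi}_{c}^{q+1}\,dx$ then gives the desired lower bound on $\int \tilde{\phi}_{c}^{p+1}\,dx$.

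Combining the two estimates, we obtain for $c$ small
\begin{equation}
    K_{1}^{0,p}(\mu\tilde{\phi}_{c}) \leq -\tfrac{1}{2}\mu^{2}(\mu^{p-1}-1)\delta_{0}^{2} + \mu^{2} c^{\alpha} C M_{1}^{q+1},
\end{equation}
and choosing $\tilde{c}$ so that the last term is strictly smaller in absolute value than the first completes the proof. The only delicate point is ensuring that all the constants involved ($\delta_{0}$, $M_{1}$, the Sobolev constants) can be chosen independently of $c\in(0,\tilde{c})$; this is straightforward because they depend only on $\sigma$, $p$, $q$, and the uniform bound from Lemma \ref{lem:boundedness_of_seq:gkdv_st}, with no subsequence extraction needed.
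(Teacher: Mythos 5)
Your proof is correct, and the core idea is the same as the paper's: substitute the Nehari identity $\tilde{K}_{c}(\tilde{\phi}_{c})=0$ into $K_{1}^{0,p}(\mu\tilde{\phi}_{c})$ so that the only positive contribution carries a factor $c^{\alpha}$, then use the uniform bound $M_{1}$ from Lemma \ref{lem:boundedness_of_seq:gkdv_st} and let $c\rightarrow +0$. The one genuine difference is which quantity you leave in front of the negative coefficient. You substitute for $\|\tilde{\phi}_{c}\|_{H^{\sigma/2}}^{2}$ and end up with $-\mu^{2}(\mu^{p-1}-1)\int\tilde{\phi}_{c}^{p+1}\,dx + O(c^{\alpha})$, which forces you to prove a $c$-uniform lower bound $\int\tilde{\phi}_{c}^{p+1}\,dx\geq\delta>0$; your sketch of that bound (first $\|\tilde{\phi}_{c}\|_{H^{\sigma/2}}\geq\delta_{0}$ via Sobolev and the Nehari identity, then back-substitution) is sound. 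The paper instead substitutes for $\int\tilde{\phi}_{c}^{p+1}\,dx$ and writes
\begin{equation}
    \mu^{-2}K_{1}^{0,p}(\mu\tilde{\phi}_{c}) \leq -\|\tilde{\phi}_{c}\|_{H^{\sigma/2}}^{2}\left\{ (\mu^{p-1}-1) - C M_{1}^{q-1}\mu^{p-1}c^{\alpha} \right\},
\end{equation}
so that once the brace is positive for small $c$, strict negativity follows from $\tilde{\phi}_{c}\neq 0$ alone --- no lower bound is needed. Both arguments work and use exactly the same inputs; the paper's factorization just makes your extra step unnecessary, so you could shorten your write-up accordingly.
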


\begin{proof}
    Let $\mu > 1$ be arbitrary.
    By $\tilde{K}_{c}(\tilde{\phi}_{c}) = 0$, the Sobolev embedding, and Lemma \ref{lem:boundedness_of_seq:gkdv_st}, we see that
    \begin{align}
        \mu^{-2} K_{1}^{0,p}(\mu \tilde{\phi}_{c}) &= \| \tilde{\phi}_{c} \|_{H^{\sigma/2}}^{2} - \mu^{p-1} \int_{\mathbb{R}} \tilde{\phi}_{c}^{p+1} \, dx \\
        &= -(\mu^{p-1} - 1) \| \tilde{\phi}_{c} \|_{H^{\sigma/2}}^{2} + \mu^{p-1} c^{\alpha} \| \tilde{\phi}_{c} \|_{L^{q+1}}^{q+1} \\\
        &\leq - \| \tilde{\phi}_{c} \|_{H^{\sigma/2}}^{2} \left\{ (\mu^{p-1} - 1) - C \mu^{p-1} c^{\alpha} \| \tilde{\phi}_{c} \|_{H^{\sigma/2}}^{q-1} \right\} \\
        &\leq - \| \tilde{\phi}_{c} \|_{H^{\sigma/2}}^{2} \left\{ (\mu^{p-1} - 1) - C M_{1}^{p-1} \mu^{p-1} c^{\alpha} \right\} \label{eq:suffcond2_2300;gkdv_st}
    \end{align}
    with some constant $C > 0$.
    Then, taking $\tilde{c} > 0$ so small that $K_{1}^{0, p}(\mu \tilde{\phi}_{c}) < 0$ holds for $c \in (0, \tilde{c})$.
\end{proof}

\begin{lem} \label{lem:suffcond2_min_seq;gkdv_st}
    Assume condition {\rm (I)} in Theorem \ref{thm:existence_of_GS;gkdv_st}.
    Then the followings hold:

    \begin{enumerate}[label={\rm (\roman*)} \ ]
        \item $\displaystyle \lim_{c \rightarrow +0} J_{1}^{0, p}(\tilde{\phi}_{c}) = J_{1}^{0, p}(\psi_{1, p})$;
        \item $\displaystyle \lim_{c \rightarrow + 0} K_{1}^{0, p}(\tilde{\phi}_{c}) = 0$.
    \end{enumerate}
\end{lem}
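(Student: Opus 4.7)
The plan is to exploit two facts: the Pohozaev/Nehari identity $\tilde{K}_{c}(\tilde{\phi}_{c}) = 0$, which tells us that $K_{1}^{0,p}$ applied to $\tilde{\phi}_{c}$ is a small error term, and the variational characterization \eqref{eq:suffcond2_2200;gkdv_st} of $\psi_{1,p}$, which lets us compare $J_{1}^{0,p}$ values across the two stationary problems.

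For part (ii), I would first notice that since $\tilde{K}_{c}(\tilde{\phi}_{c}) = 0$,
\begin{equation}
    K_{1}^{0,p}(\tilde{\phi}_{c}) = \| \tilde{\phi}_{c} \|_{H^{\sigma/2}}^{2} - \int_{\mathbb{R}} \tilde{\phi}_{c}^{p+1} \, dx = - c^{\alpha} \int_{\mathbb{R}} \tilde{\phi}_{c}^{q+1} \, dx.
\end{equation}
Combining the uniform bound from Lemma \ref{lem:boundedness_of_seq:gkdv_st} with the Sobolev embedding $H^{\sigma/2}(\mathbb{R}) \hookrightarrow L^{q+1}(\mathbb{R})$, the integral on the right is bounded independently of $c$. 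Since $\alpha = (q-p)/(p-1) > 0$, the factor $c^{\alpha}$ tends to $0$, giving (ii).

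For part (i), I would prove matching upper and lower bounds for $\limsup$ and $\liminf$ of $J_{1}^{0,p}(\tilde{\phi}_{c})$. The upper bound comes from the definitions
\begin{equation}
    J_{1}^{0,p}(\tilde{\phi}_{c}) = \tilde{J}_{c}(\tilde{\phi}_{c}) - c^{\alpha}\!\left( \frac{1}{p+1} - \frac{1}{q+1} \right) \| \tilde{\phi}_{c} \|_{L^{q+1}}^{q+1} \leq \tilde{J}_{c}(\tilde{\phi}_{c})
\end{equation}
together with \eqref{eq:suffcond2_2210;gkdv_st}, which is $\tilde{J}_{c}(\tilde{\phi}_{c}) \leq J_{1}^{0,p}(\psi_{1,p}) + c^{\alpha}(\cdots)\|\psi_{1,p}\|_{L^{q+1}}^{q+1}$; letting $c \to +0$ yields $\limsup_{c \to +0} J_{1}^{0,p}(\tilde{\phi}_{c}) \leq J_{1}^{0,p}(\psi_{1,p})$.

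For the lower bound, I would fix $\mu > 1$ and apply Lemma \ref{lem:suffcond2_Nehari_est;gkdv_st} to find $\tilde{c}(\mu) > 0$ with $K_{1}^{0,p}(\mu \tilde{\phi}_{c}) < 0$ for all $c \in (0, \tilde{c}(\mu))$. The characterization \eqref{eq:suffcond2_2200;gkdv_st} as an infimum over $\{ K_{1}^{0,p} \leq 0 \}$ then gives
\begin{equation}
    J_{1}^{0,p}(\psi_{1,p}) \leq J_{1}^{0,p}(\mu \tilde{\phi}_{c}) = \mu^{2} J_{1}^{0,p}(\tilde{\phi}_{c}),
\end{equation}
so that $J_{1}^{0,p}(\psi_{1,p}) \leq \mu^{2} \liminf_{c \to +0} J_{1}^{0,p}(\tilde{\phi}_{c})$. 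Sending $\mu \to 1^{+}$ closes the inequality. I do not expect a major obstacle here; the only point requiring care is to apply the right form of the minimization characterization (the one over the sub-Nehari set $\{K_{1}^{0,p} \leq 0\}$), which is precisely why the equivalent formulation \eqref{eq:suffcond2_2200;gkdv_st} was set up earlier.
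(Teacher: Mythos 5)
Your proposal is correct and follows essentially the same route as the paper: the upper bound comes from \eqref{eq:suffcond2_2210;gkdv_st}, the lower bound from Lemma \ref{lem:suffcond2_Nehari_est;gkdv_st} combined with the sub-Nehari characterization \eqref{eq:suffcond2_2200;gkdv_st} and the quadratic homogeneity of $J_{1}^{0,p}$, and (ii) from $\tilde{K}_{c}(\tilde{\phi}_{c})=0$ together with the uniform bound of Lemma \ref{lem:boundedness_of_seq:gkdv_st}. The only slip is the sign in (ii): since $\tilde{K}_{c}(v)=\|v\|_{H^{\sigma/2}}^{2}-\int_{\mathbb{R}}v^{p+1}\,dx-c^{\alpha}\|v\|_{L^{q+1}}^{q+1}$, one gets $K_{1}^{0,p}(\tilde{\phi}_{c})=+c^{\alpha}\|\tilde{\phi}_{c}\|_{L^{q+1}}^{q+1}$, which does not affect the conclusion.
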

\begin{proof}
    (i) \
    First, \eqref{eq:suffcond2_2210;gkdv_st} implies $\limsup_{c \rightarrow +0} \tilde{J}_{c}(\tilde{\phi}_{c}) \leq J_{1}^{0, p}(\psi_{1, p})$.
    
    Let $\mu > 1$.
    By \eqref{eq:suffcond2_2200;gkdv_st} and Lemma \ref{lem:suffcond2_Nehari_est;gkdv_st}, we have
    \begin{equation}
        J_{1}^{0, p}(\psi_{1, p}) \leq J_{1}^{0, p}(\mu \tilde{\phi}_{c}) \leq \mu^{2} \tilde{J}_{c}(\tilde{\phi}_{c})
    \end{equation}
    for $c > 0$ sufficiently small.
    By considering the arbitrary of $\mu > 1$, we obtain
    \begin{equation}
        J_{1}^{0, p}(\psi_{1, p}) \leq \liminf_{c \rightarrow +0} \tilde{J}_{c}(\tilde{\phi}_{c}).
    \end{equation}
    Thus, we conclude that $\lim_{c \rightarrow +0} \tilde{J}_{c}(\tilde{\phi}_{c}) = J_{1}^{0, p}(\psi_{1, p})$.

    Since we can write
    \begin{equation}
        J_{1}^{0, p}(v) = \tilde{J}_{c}(v) - c^{\alpha} \left( \frac{1}{p+1} - \frac{1}{q+1} \right) \| v \|_{L^{q+1}}^{q+1},
    \end{equation}
    we obtain
    \begin{equation}
        \lim_{c \rightarrow +0} J_{1}^{0, p}(\tilde{\phi}_{c}) = \lim_{c \rightarrow +0} \tilde{J}_{c}(\tilde{\phi}_{c}) = J_{1}^{0, p}(\psi_{1, p}).
    \end{equation}

    (ii) \ By $\tilde{K}_{c}(\tilde{\phi}_{c}) = 0$, we have
    \begin{equation}
        K_{1}^{0, p}(\tilde{\phi}_{c}) = c^{\alpha} \| \tilde{\phi}_{c} \|_{L^{q+1}}^{q+1}.
    \end{equation}
    Since the family $\{ \tilde{\phi}_{c} \}_{c > 0}$ is also bounded in $L^{q+1}(\mathbb{R})$, we can conclude that $\lim_{c \rightarrow +0} K_{1}^{0}(\tilde{\phi}_{c}) = 0$.
\end{proof}

To prove Lemma \ref{lem:convergence;gkdv_st}, we recall a compactness lemma obtained by Strauss~\cite{Strauss}.

\begin{lem}[Strauss~\cite{Strauss}] \label{lem:Strauss_Compactness;gkdv_st}
    Assume that $P, \ Q\colon \mathbb{R} \rightarrow \mathbb{R}$ are continuous functions satisfying
    \begin{align}
        \frac{P(s)}{Q(s)} \rightarrow 0 \quad \text{\rm as} \ |s| \rightarrow \infty \ \text{\rm and} \ |s| \rightarrow 0.
    \end{align}
    Moreover, let $(u_{n})_{n}$ be a sequence of measurable functions defined in $\mathbb{R}$ which satisfies
    \begin{align}
        \sup_{n\in\mathbb{N}}\int_{\mathbb{R}}|Q(u_{n})| \, dx < \infty
    \end{align}
    and assume that $P(u_{n}(x))\rightarrow v(x)$ a.e.\ in $\mathbb{R}$ with some measurable function $v \colon \mathbb{R} \rightarrow \mathbb{R}$, and that $u_{n}(x)\rightarrow 0$ as $|x|\rightarrow \infty$ uniformly with respect to $n\in\mathbb{N}$.
    Then it holds that $P(u_{n})\rightarrow v$ in $L^{1}(\mathbb{R})$.
\end{lem}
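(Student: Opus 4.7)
The plan is to deduce the conclusion from Vitali's convergence theorem. Since we already have a.e.\ convergence $P(u_n) \to v$, it suffices to establish two uniform-in-$n$ properties of the sequence $\{P(u_n)\}$: tightness at infinity, and uniform integrability. Both will follow from the ratio condition $|P(s)|/|Q(s)| \to 0$ together with the uniform $L^1$ bound on $|Q(u_n)|$ and the uniform decay of $u_n$ at infinity.

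The key preparatory step is the following. Fix $\varepsilon > 0$. By the hypothesis on $P/Q$, choose $0 < \delta_\varepsilon < M_\varepsilon$ such that
\begin{equation}
    |P(s)| \leq \varepsilon |Q(s)| \quad \text{whenever } |s| \leq \delta_\varepsilon \text{ or } |s| \geq M_\varepsilon.
\end{equation}
Since $P$ is continuous, $C_\varepsilon \coloneq \max_{|s| \leq M_\varepsilon} |P(s)| < \infty$. Writing $C \coloneq \sup_n \int_{\mathbb{R}} |Q(u_n)|\, dx$, the splitting
\begin{equation}
    \int_A |P(u_n)|\, dx \leq \int_{A \cap \{|u_n| \notin [\delta_\varepsilon, M_\varepsilon]\}} \varepsilon |Q(u_n)|\, dx + \int_{A \cap \{\delta_\varepsilon \leq |u_n| \leq M_\varepsilon\}} C_\varepsilon\, dx \leq \varepsilon C + C_\varepsilon |A|
\end{equation}
holds for every measurable $A \subset \mathbb{R}$. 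Taking $|A| < \varepsilon / C_\varepsilon$ yields uniform integrability.

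For tightness, the uniform decay assumption gives $R_\varepsilon > 0$ such that $|u_n(x)| < \delta_\varepsilon$ for every $|x| > R_\varepsilon$ and every $n$. On that set $|P(u_n)| \leq \varepsilon |Q(u_n)|$, so $\int_{|x| > R_\varepsilon} |P(u_n)|\, dx \leq \varepsilon C$. Fatou's lemma applied to $|P(u_n)| \to |v|$ a.e.\ gives the same bound for $\int_{|x| > R_\varepsilon} |v|\, dx$, so in particular $v \in L^1(\mathbb{R})$ and the tails of $v$ and of $P(u_n)$ are uniformly small. Vitali's convergence theorem on $\mathbb{R}$ (a.e.\ convergence plus uniform integrability plus tightness) then delivers $P(u_n) \to v$ in $L^1(\mathbb{R})$.

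The only mildly subtle point is that we are only given a.e.\ convergence of $P(u_n)$, not of $u_n$ itself, so one cannot pass to the limit directly through a Lebesgue dominated-convergence argument with $u_n$ inside the integrand. The two-parameter cutoff $(\delta_\varepsilon, M_\varepsilon)$ handles this: outside $[\delta_\varepsilon, M_\varepsilon]$, control of $|P|$ by $\varepsilon|Q|$ is strong enough to make absolute continuity and tightness uniform in $n$, while inside $[\delta_\varepsilon, M_\varepsilon]$, continuity of $P$ alone already gives a pointwise $L^\infty$-bound — and the set $\{|u_n| \geq \delta_\varepsilon\}$ is contained in $[-R_\varepsilon, R_\varepsilon]$ thanks to the uniform decay, so that bound is harmless. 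Once these bounds are in place, Vitali's theorem finishes the argument with no further work.
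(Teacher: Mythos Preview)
Your proof is correct. The paper does not actually prove this lemma: it is quoted verbatim from Strauss and used as a black box, so there is no ``paper's own proof'' to compare against. What you have written is essentially the classical argument for Strauss's compactness lemma --- the two-sided cutoff $(\delta_\varepsilon, M_\varepsilon)$ coming from the ratio hypothesis, the uniform $L^1$ bound on $Q(u_n)$ controlling the large/small-value regions, and the uniform spatial decay handling the tails, followed by Vitali. One very small point: to conclude $v \in L^1(\mathbb{R})$ you need the bound on $\int_{|x|\le R_\varepsilon}|P(u_n)|$ as well (which your splitting with $A=[-R_\varepsilon,R_\varepsilon]$ already provides, via Fatou), not just the tail estimate; but this is implicit in what you wrote.
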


\begin{lem} \label{lem:uniform_decay_of_even_functions;gkdv_st}
    Let $(u_{n})_{n} \subset H^{\sigma/2}(\mathbb{R})$ be a sequence of nonnegative and even function which decrease in $|x|$.
    If $(u_{n})_{n}$ is bounded in $H^{\sigma/2}(\mathbb{R})$, then there exists $C > 0$ which is independent of $n \in \mathbb{R}$ and satisfies
    \begin{equation}
        \sup_{n \in \mathbb{N}}u_{n}(x) \leq C|x|^{-1/2}
    \end{equation}
    for all $x \in \mathbb{R}$.
\end{lem}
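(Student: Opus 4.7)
The plan is to exploit the monotonicity of the $u_n$ in $|x|$ together with the uniform $L^2$-bound that comes from boundedness in $H^{\sigma/2}(\mathbb{R})$. Since each $u_n$ is nonnegative, even, and decreasing in $|x|$, the value $u_n(x)$ at any point $x > 0$ controls $u_n$ on the whole interval $[-x,x]$ from below: $u_n(y) \geq u_n(x)$ whenever $|y| \leq x$. This is the key geometric input and is precisely the one-dimensional analogue of Strauss's radial decay estimate.

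First I would fix $x > 0$ and bound the $L^2$-norm from below by integrating only over $[-x,x]$, using monotonicity to replace the integrand by its value at the endpoint. This yields
\begin{equation}
\|u_n\|_{L^2}^{2} \geq \int_{-x}^{x} u_n(y)^{2}\, dy \geq 2x\, u_n(x)^{2},
\end{equation}
which after rearranging gives $u_n(x) \leq \|u_n\|_{L^2}/\sqrt{2x}$. By evenness the same estimate holds at $-x$ with $|x|$ in the denominator. Finally, since the sequence is bounded in $H^{\sigma/2}(\mathbb{R})$, it is bounded in $L^2(\mathbb{R})$, so there exists some $M > 0$ such that $\|u_n\|_{L^2} \leq M$ for all $n \in \mathbb{N}$; setting $C := M/\sqrt{2}$ gives the claimed uniform bound $\sup_{n} u_n(x) \leq C|x|^{-1/2}$ for all $x \neq 0$ (and the statement at $x = 0$ is vacuous since the right-hand side is $+\infty$).

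I do not expect any real obstacle in this argument; the only subtlety is being careful that the pointwise inequality $u_n(y) \geq u_n(x)$ for $|y| \leq x$ is justified by the hypothesis that $u_n$ is nonnegative and decreasing in $|x|$, which I would note explicitly. No use is made of the fractional Sobolev structure beyond the embedding $H^{\sigma/2}(\mathbb{R}) \hookrightarrow L^2(\mathbb{R})$, so the argument is uniform in $\sigma \in [1,2]$.
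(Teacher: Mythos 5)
Your argument is correct and is essentially the same as the paper's: both use the monotone radial-decay trick $\| u_{n} \|_{L^{2}}^{2} \geq 2x\, u_{n}(x)^{2}$ combined with the uniform $L^{2}$-bound inherited from boundedness in $H^{\sigma/2}(\mathbb{R})$. No differences worth noting.
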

\begin{proof}
    Here we put $M \coloneqq \sup_{n\in \mathbb{N}} \| u_{n} \|_{H^{\sigma/2}}$.
    Without loss of generality, we may assume that $x > 0$.
    Then, for all $n \in \mathbb{N}$, direct calculation yields that
    \begin{equation}
        \| u_{n} \|_{L^{2}}^{2} = 2 \int_{0}^{\infty} u_{n}(y)^{2} \, dy \geq 2 \int_{0}^{x} u_{n}(y)^{2} \, dy \geq 2xu_{n}(x)^2.
    \end{equation}
    Therefore, we obtain
    \begin{equation}
        u_{n}(x) \leq 2^{-1/2} x^{-1/2} \| u_{n} \|_{L^{2}}^{2} \leq C x^{-1/2} \| u_{n} \|_{H^{\sigma/2}}^{2} \leq C M^{2} x^{-1/2}
    \end{equation}
    holds for all $n \in \mathbb{N}$.
    Replacing $CM^{2}$ with $C$ concludes the proof.
\end{proof}

Finally, we give the proof of Lemma \ref{lem:convergence;gkdv_st}.
\begin{proof}
    First, we remark that the minimizers of \eqref{eq:suffcond2_2200;gkdv_st} coincide with the ground state solutions to \eqref{eq:coercivity0500;gkdv_st}.
    We can see it with similar way to Section \ref{subsection:GS_existence;gkdv_st}.

    Let $(c_{n})_{n} \subset (0, \infty)$ satisfy $c_{n} \rightarrow +0$.
    By Lemma \ref{lem:suffcond2_min_seq;gkdv_st}, we have
    \begin{align}
        & J_{1}^{0, p}(\tilde{\phi}_{c_{n}}) \rightarrow J_{1}^{0, p}(\psi_{1, p}), \label{eq:suffcond2_0280;gkdv_st} \\
        & K_{1}^{0, p}(\tilde{\phi}_{c_{n}}) \rightarrow 0. \label{eq:suffcond2_0290;gkdv_st}
    \end{align}
    Since \eqref{eq:suffcond2_0280;gkdv_st} implies that $(\tilde{c}_{n})_{n}$ is bounded in $H^{\sigma/2}(\mathbb{R})$, up to a subsequence, there exists $v_{0} \in H^{\sigma/2}(\mathbb{R})$ such that $\tilde{\phi}_{c_{n}} \rightharpoonup v_{0}$ weakly in $H^{\sigma/2}(\mathbb{R})$.
    This yields that
    \begin{equation}
        J_{1}^{0, p}(v_{0}) \leq \liminf_{n \rightarrow \infty}J_{1}^{0, p}(\tilde{\phi}_{c_{n}}) = J_{1}^{0, p}(\psi_{1, p}). \label{eq:suffcond2_0291;gkdv_st}
    \end{equation}

    Next, we show that $\tilde{\phi}_{c_{n}} \rightarrow v_{0}$ strongly in $L^{\gamma+1}$ for $\gamma \in (1, \infty)$.
    By the weak convergence, we can see that $\tilde{\phi}_{c_{n}}(x) \rightarrow v_{0}(x)$ a.e.\ in $\mathbb{R}$.
    Moreover, we put $P(s) \coloneq |s|^{r+1}$, $Q(s) \coloneq s^{2} + |s|^{r+2}$ for $s \in \mathbb{R}$, and $w_{n} \coloneq \tilde{\phi}_{c_{n}} - v_{0}$ for $n \in \mathbb{N}$.
    Applying Lemma \ref{lem:uniform_decay_of_even_functions;gkdv_st}, we can verify that the functions $P$ and $Q$, and the sequence $(w_{n})_{n}$ satisfy the assumptions of Lemma \ref{lem:Strauss_Compactness;gkdv_st}.
    Therefore, we obtain $\| w_{n} \|_{L^{\gamma+1}} \rightarrow 0$, which implies that $\tilde{\phi}_{c_{n}} \rightarrow v_{0}$ strongly in $L^{\gamma+1}(\mathbb{R})$.
    Using this strong convergence and \eqref{eq:suffcond2_0280;gkdv_st}, we obtain
    \begin{equation}
        K_{1}^{0, p}(v_{0}) \leq \liminf_{n \rightarrow \infty} K_{1}^{0, p}(\tilde{\phi}_{c_{n}}) = 0. \label{eq:suffcond2_2310;gkdv_st}
    \end{equation}
    Suppose that $K_{1}^{0, p}(v_{0}) < 0$.
    Then, similarly to Lemma \ref{lem:GS_ineq_IJ;gkdv_st}, we can find some $\lambda_{0} \in (0, 1)$ such that $K_{1}^{0, p}(\lambda_{0} v_{0}) = 0$.
    By \eqref{eq:suffcond2_2200;gkdv_st}, we obtain
    \begin{equation}
        J_{1}^{0, p}(\psi_{1, p}) \leq J_{1}^{0}(\lambda_{0} v_{0}) < J_{1}^{0, p}(v_{0}),
    \end{equation}
    which contradicts to \eqref{eq:suffcond2_0291;gkdv_st}.
    Therefore, we conclude that $K_{1}^{0, p}(v_{0}) = 0$.
    Transforming this, we have $\| v_{0} \|_{H^{\sigma/2}}^{2} = \int_{\mathbb{R}} v_{0}^{p+1} \, dx$.
    Moreover, we see it from $\tilde{K}_{c}(\tilde{\phi}_{c_{n}}) = 0$ that
    \begin{equation}
        \| \tilde{\phi}_{c_{n}} \|_{H^{\sigma/2}}^{2} = \int_{\mathbb{R}} \tilde{\phi}_{c_{n}}^{p+1} \, dx + c^{\alpha} \| \tilde{\phi}_{c_{n}} \|_{L^{q+1}}^{q+1} \rightarrow \int_{\mathbb{R}} v_{0}^{p+1} \, dx \quad \text{as} \ n \rightarrow \infty,
    \end{equation}
    which yields that $\| \tilde{\phi}_{c_{n}} \|_{H^{\sigma/2}} \rightarrow \| v_{0} \|_{H^{\sigma/2}}$.
    Since $\tilde{\phi}_{c_{n}} \rightharpoonup v_{0}$ weakly in $H^{\sigma/2}(\mathbb{R})$, we conclude that $\tilde{\phi}_{c_{n}} \rightarrow v_{0}$ strongly in $H^{\sigma/2}(\mathbb{R})$.
    By \eqref{eq:suffcond2_0280;gkdv_st}, we obtain
    \begin{equation}
        J_{1}^{0, p}(v_{0}) = \lim_{n \rightarrow \infty}J_{1}^{0, p}(\tilde{\phi}_{c_{n}}) = J_{1}^{0, p}(\psi_{1, p}) > 0,
    \end{equation}
    which implies that $v_{0} \neq 0$ and that $S_{1}^{0, p}(v_{0}) = S_{1}^{0, p}(\psi_{1, p})$.
    Since $K_{1}^{0, p}(v_{0}) = 0$, we see that $v_{0}$ is a minimizer of $S_{1}^{0, p}(\psi_{1, p})$, which means that $v_{0}$ is a ground state solution to \eqref{eq:coercivity0500;gkdv_st}.
    Moreover, since $\tilde{\phi}_{c_{n}}$ is positive and even for each $n \in \mathbb{N}$ and that $\tilde{\phi}_{c_{n}}(x) \rightarrow v_{0}(x)$ a.e.\ in $\mathbb{R}$, we see that $v_{0}$ is also positive and even.
    Therefore, by the uniqueness of the positive and even ground state solution to \eqref{eq:coercivity0500;gkdv_st}, we obtain $v_{0} = \psi_{1, p}$.

    Thus, we conclude that $\tilde{\phi}_{c_{n}} \rightarrow \psi_{1, p}$ strongly in $H^{\sigma/2}(\mathbb{R})$.
\end{proof}

Next, we consider case (II-1) and $p$ is odd.
The Nehari functional $\breve{K}_{c}$ derived from $\breve{S}_{c}$ is given as
\begin{equation}
    \breve{K}_{c}(v) \coloneq \langle \breve{S}_{c}'(v), v \rangle = \| v \|_{H^{\sigma/2}}^{2} - c^{-\beta} \| v \|_{L^{p+1}}^{p+1} - \int_{\mathbb{R}} v^{q+1} \, dx.
\end{equation}
Here we define similar functional $\breve{I}_{c}$ to \eqref{eq:GS0220;gkdv_st} as
\begin{align}
    \breve{I}_{c}(v) &\coloneq \breve{S}_{c}(v) - \frac{1}{q+1} \breve{K}_{c}(v) \\
    &= \left( \frac{1}{2} - \frac{1}{q+1} \right) \| v \|_{H^{\sigma/2}}^{2} + c^{- \beta} \left( \frac{1}{p+1} - \frac{1}{q+1} \right) \| v \|_{L^{p+1}}^{p+1}.
\end{align}
Then, similarly to \eqref{eq:suffcond2_2100;gkdv_st}, we can see that
\begin{align}
    \breve{S}_{c}(\breve{\phi}_{c}) &= \inf \{ \breve{S}_{c}(v): v \in H^{\sigma/2}(\mathbb{R}) \setminus \{0\}, \ \breve{K}_{c}(v) = 0 \}, \\
    &= \inf \{ \breve{I}_{c}(v): v \in H^{\sigma/2}(\mathbb{R}) \setminus \{0\}, \ \breve{K}_{c}(v) = 0 \}, \\
    &= \inf \{ \breve{I}_{c}(v): v \in H^{\sigma/2}(\mathbb{R}) \setminus \{0\}, \ \breve{K}_{c}(v) \leq 0 \}, \label{eq:suffcond2_2400;gkdv_st}
\end{align}
where $\phi_{c}$ is a positive ground state solution to \eqref{eq:Intro0300;gkdv_st}, and $\breve{\phi}_{c}$ is a function obtained by the scaling \eqref{eq:coercivity0300;gkdv_st}.

To prove Lemma \ref{lem:convergence;gkdv_st} in this case, it is sufficient to
show the following statement:
\begin{lem} \label{label:min_seq_caseII1;gkdv_st}
    Assume condition {\rm (II-1)} in Theorem \ref{thm:existence_of_GS;gkdv_st}.
    Let $\phi_{c}$ be a positive ground state solution to \eqref{eq:Intro0300;gkdv_st} $c > 0$, and $\breve{\phi}_{c}$ is a function obtained by the scaling \eqref{eq:coercivity0300;gkdv_st}.
    Moreover, let $\psi_{1,q}$ be the positive ground state solution to \eqref{eq:coercivity0500;gkdv_st} with $r = q$.
    Then, the followings hold:
    \begin{enumerate}[label={\rm (\roman*)} \ ]
        \item $\displaystyle \lim_{c \rightarrow +\infty} J_{1}^{0, q}(\breve{\phi}_{c}) = J_{1}^{0, q}(\psi_{1, q})$;
        \item $\displaystyle \lim_{c \rightarrow +\infty} K_{1}^{0,q}(\breve{\phi}_{c}) = 0$.
    \end{enumerate}
\end{lem}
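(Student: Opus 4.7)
The plan is to mirror the three-step architecture of Lemma \ref{lem:suffcond2_min_seq;gkdv_st}, testing now against rescaled copies of $\psi_{1,q}$ and exploiting the $c\to\infty$ limit of the scaled stationary problem \eqref{eq:coercivity0400;gkdv_st}. The cleanest way to organize the comparison is to note the two algebraic identities
\begin{equation}
K_{1}^{0,q}(v) = \breve{K}_{c}(v) - c^{-\beta}\|v\|_{L^{p+1}}^{p+1}, \qquad \breve{I}_{c}(v) = J_{1}^{0,q}(v) + \bigl(\tfrac{1}{p+1}-\tfrac{1}{q+1}\bigr)c^{-\beta}\|v\|_{L^{p+1}}^{p+1},
\end{equation}
so that passing from the single-power functionals $K_{1}^{0,q},J_{1}^{0,q}$ to the double-power ones $\breve{K}_{c},\breve{I}_{c}$ costs only an $O(c^{-\beta})$ correction. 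In particular $\breve{\phi}_{c}$, characterized by $\breve{K}_{c}(\breve{\phi}_{c})=0$, automatically satisfies $K_{1}^{0,q}(\breve{\phi}_{c})=-c^{-\beta}\|\breve{\phi}_{c}\|_{L^{p+1}}^{p+1}<0$, so it lies in the feasible set of the variational problem for $\psi_{1,q}$. This dispenses with the delicate Nehari-sign estimate of Lemma \ref{lem:suffcond2_Nehari_est;gkdv_st}; the analytic content is concentrated instead in the upper-bound test function.

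For the upper bound I would proceed as follows. Since $\breve{K}_{c}(\psi_{1,q})=c^{-\beta}\|\psi_{1,q}\|_{L^{p+1}}^{p+1}>0$ while $\breve{K}_{c}(\lambda\psi_{1,q})\to-\infty$ as $\lambda\to\infty$ (as $q>p$), there exists $\lambda_{c}>1$ with $\breve{K}_{c}(\lambda_{c}\psi_{1,q})=0$; using $K_{1}^{0,q}(\psi_{1,q})=0$, the defining equation rewrites as
\begin{equation}
\lambda_{c}^{q-1}=1+\frac{\|\psi_{1,q}\|_{L^{p+1}}^{p+1}}{\|\psi_{1,q}\|_{H^{\sigma/2}}^{2}}\,\lambda_{c}^{p-1}c^{-\beta}.
\end{equation}
A standard comparison argument shows $\{\lambda_{c}\}_{c\gg 1}$ is bounded (for $\lambda_{c}$ large the left side would outgrow the right since $q>p$) and then that $\lambda_{c}\to 1$ as $c\to\infty$, whence $\breve{I}_{c}(\lambda_{c}\psi_{1,q})\to J_{1}^{0,q}(\psi_{1,q})$. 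Applying the variational characterization \eqref{eq:suffcond2_2400;gkdv_st} with test $\lambda_{c}\psi_{1,q}$ yields $\limsup_{c\to\infty}\breve{I}_{c}(\breve{\phi}_{c})\le J_{1}^{0,q}(\psi_{1,q})$; together with $\breve{I}_{c}(v)\ge(\tfrac{1}{2}-\tfrac{1}{q+1})\|v\|_{H^{\sigma/2}}^{2}$, this bounds $\{\breve{\phi}_{c}\}_{c\gg 1}$ uniformly in $H^{\sigma/2}(\mathbb{R})$, hence in $L^{p+1}(\mathbb{R})$.

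The matching lower bound is immediate: since $K_{1}^{0,q}(\breve{\phi}_{c})<0$, the characterization \eqref{eq:suffcond2_2200;gkdv_st} for $\psi_{1,q}$ gives $J_{1}^{0,q}(\psi_{1,q})\le J_{1}^{0,q}(\breve{\phi}_{c})$. Combining with $J_{1}^{0,q}(\breve{\phi}_{c})\le \breve{I}_{c}(\breve{\phi}_{c})$ (read off from the second identity above) and the upper bound, we obtain the sandwich
\begin{equation}
J_{1}^{0,q}(\psi_{1,q})\le J_{1}^{0,q}(\breve{\phi}_{c})\le \breve{I}_{c}(\breve{\phi}_{c}),
\end{equation}
whose right-hand side tends to $J_{1}^{0,q}(\psi_{1,q})$, proving (i). Assertion (ii) then follows at once from the identity $K_{1}^{0,q}(\breve{\phi}_{c})=-c^{-\beta}\|\breve{\phi}_{c}\|_{L^{p+1}}^{p+1}$ together with the uniform $L^{p+1}$ bound just obtained. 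The only step requiring care is the behaviour of $\lambda_{c}$ and the convergence $\breve{I}_{c}(\lambda_{c}\psi_{1,q})\to J_{1}^{0,q}(\psi_{1,q})$, but this reduces to a routine analysis of the single scalar equation above.
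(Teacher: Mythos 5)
Your argument is correct, and it follows the same overall architecture as the paper (a sandwich between the variational characterizations \eqref{eq:suffcond2_2200;gkdv_st} and \eqref{eq:suffcond2_2400;gkdv_st}, with the discrepancy between the single-power and double-power functionals controlled as an $O(c^{-\beta})$ correction via a uniform $H^{\sigma/2}$ bound), but you obtain the two halves of the sandwich by different, and in fact lighter, devices. For the lower bound, the paper rescales $\breve{\phi}_{c}$ by an arbitrary $\mu>1$, verifies $K_{1}^{0,q}(\mu\breve{\phi}_{c})<0$ for $c$ large by a Sobolev estimate, and then lets $\mu\downarrow 1$; you instead observe that the sign of the $p$-power term in case (II-1) (where $a=-1$) already forces $K_{1}^{0,q}(\breve{\phi}_{c})=-c^{-\beta}\|\breve{\phi}_{c}\|_{L^{p+1}}^{p+1}<0$ for \emph{every} $c>0$, so $\breve{\phi}_{c}$ lies in the relaxed feasible set with no rescaling and no threshold on $c$ — a genuine simplification that correctly exploits the sign structure distinguishing case (II-1) from case (I). For the upper bound, the paper again uses fixed dilations $\mu\psi_{1,q}$ together with the relaxed constraint $\breve{K}_{c}\le 0$ and the arbitrariness of $\mu$, while you project $\psi_{1,q}$ exactly onto the Nehari manifold $\{\breve{K}_{c}=0\}$ via $\lambda_{c}\psi_{1,q}$ and show $\lambda_{c}\to 1$ from the scalar equation $\lambda_{c}^{q-1}=1+A\lambda_{c}^{p-1}c^{-\beta}$; the boundedness and convergence of $\lambda_{c}$ are indeed routine (any root $\lambda_{c}>1$ satisfies $\lambda_{c}\le\max\{1,(1+A)^{1/(q-p)}\}$ for $c\ge 1$, whence $\lambda_{c}^{q-1}-1=O(c^{-\beta})$), and this variant only needs the exact-constraint version of \eqref{eq:suffcond2_2400;gkdv_st}. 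The uniform $H^{\sigma/2}$ bound and the deduction of (i) and (ii) then coincide with the paper's. Note that your identities presuppose $\breve{K}_{c}(v)=\|v\|_{H^{\sigma/2}}^{2}+c^{-\beta}\|v\|_{L^{p+1}}^{p+1}-\int_{\mathbb{R}}v^{q+1}\,dx$, i.e.\ a plus sign on the $p$-term; this is the convention consistent with the definition of $\breve{S}_{c}$ for $a=-1$ and with the paper's own computations, so your use of it is sound.
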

The method to prove this lemma is almost the same as that of Lemma \ref{lem:suffcond2_min_seq;gkdv_st}, but the way to obtain the boundedness of the family $\{\breve{\phi}_{c}\}_{c > 0}$ in $H^{\sigma/2}$ is slightly different.
\begin{proof}[Proof of Lemma \ref{label:min_seq_caseII1;gkdv_st}]
    Let $\mu > 1$ be arbitrary.
    By $K_{1}^{0,q}(\psi_{1,q}) = 0$, we have
    \begin{align}
        \mu^{-2} \breve{K}_{c}(\mu \psi_{1, q}) &= \| \psi_{1, q} \|_{H^{\sigma/2}}^{2} + \mu^{p-1} c^{-\beta} \| \psi_{1, q} \|_{L^{p+1}}^{p+1} - \mu^{q-1} \int_{\mathbb{R}} \psi_{1, q}^{q+1} \, dx \\
        &\leq - \| \psi_{1, q} \|_{H^{\sigma/2}}^{2} \left\{ (\mu^{q-1} - 1) - C \mu^{q-1} c^{-\beta} \| \psi_{1, q} \|_{H^{\sigma/2}}^{p-1} \right\}
    \end{align}
    with some constant $C > 0$.
    Then we can see that there exists $\breve{c}_{1} \in (0, \infty)$ such that $\breve{K}_{c}(\mu \psi_{1, q}) < 0$ for $c \in (\breve{c}_{1}, \infty)$.
    Therefore, by \eqref{eq:suffcond2_2400;gkdv_st}, we obtain
    \begin{equation}
        \breve{I}_{c}(\breve{\phi}_{c}) \leq \breve{I}_{c}(\mu \psi_{1, q}) \leq \mu^{q+1} \left\{ \left( \frac{1}{2} - \frac{1}{q+1} \right) \| \psi_{1, q} \|_{H^{\sigma/2}}^{2} + c^{-\beta} \left( \frac{1}{p+1} - \frac{1}{q+1} \right) \| \psi_{1,q} \|_{H^{\sigma/2}}^{p+1} \right\}. \label{eq:suffcond2_2500;gkdv_st}
    \end{equation}
    The right hand side of \eqref{label:min_seq_caseII1;gkdv_st} converges to $\mu^{q+1} J_{1}^{0,q}(\psi_{1, q})$ as $c \rightarrow +\infty$.
    Then the arbitrarity of $\mu > 1$ gives
    \begin{equation}
        \limsup_{c \rightarrow +\infty} \breve{I}_{c}(\breve{\phi}_{c}) \leq J_{1}^{0, q}(\psi_{1,q}). \label{eq:suffcond2_2600;gkdv_st}
    \end{equation}
    Moreover, by \eqref{eq:suffcond2_2500;gkdv_st}, we can see that
    \begin{equation}
        \| \breve{\phi}_{c} \|_{H^{\sigma/2}}^{2} \leq C \mu^{q+1} \left( \| \psi_{1, q} \|_{H^{\sigma/2}}^{2} + c^{-\beta} \| \psi_{1, q} \|_{H^{\sigma/2}}^{p+1} \right)
    \end{equation}
    holds for $c > \breve{c}_{1}$.

    Next, since $\breve{K}_{c}(\breve{\phi}_{c}) = 0$, we have
    \begin{align}
        \mu^{-2} K_{1}^{0,q}(\breve{\phi}_{c}) &= \| \breve{\phi}_{c} \|_{H^{\sigma/2}}^{2} - \mu^{q-1} \int_{\mathbb{R}} \breve{\phi}_{c}^{q+1} \, dx \\
        &= - (\mu^{q-1} - 1) \| \breve{\phi}_{c} \|_{H^{\sigma/2}}^{2} + \mu^{q-1} c^{-\beta} \| \breve{\phi}_{c} \|_{L^{p+1}}^{p+1} \\
        &\leq - \| \breve{\phi}_{c} \|_{H^{\sigma/2}}^{2} \left\{ (\mu^{q-1} - 1) - C \mu^{q-1} c^{-\beta} \| \breve{\phi}_{c} \|_{H^{\sigma/2}}^{p-1} \right\} \\
        &\leq - \| \breve{\phi}_{c} \|_{H^{\sigma/2}}^{2} \left\{(\mu^{q-1} - 1) - C \mu^{(q-1)(q+1)\theta} \left( c^{-\beta} \| \psi_{1,q} \|_{H^{\sigma/2}}^{p-1} + c^{-\beta(\theta + 1)} \| \psi_{1,q} \|_{H^{\sigma/2}}^{(p+1)\theta} \right) \right\},
    \end{align}
    where $\theta = (p-1)/2$.
    Then there exists $\breve{c}_{2} \in [\breve{c}_{1}, \infty)$ such that $K_{1}^{0, q}(\mu \breve{\phi}_{c}) < 0$ holds for $c > \breve{c}_{2}$.
    Therefore, by \eqref{eq:suffcond2_2100;gkdv_st}, we obtain
    \begin{equation}
        J_{1}^{0,q}(\psi_{1,q}) \leq J_{1}^{0,q}(\mu \breve{\phi}_{c}) \leq \mu^{2} \breve{I}_{c}(\breve{\phi})
    \end{equation}
    for $c > \breve{c}_{2}$.
    Finally, since $\mu > 1$ is arbitrary, we see that
    \begin{equation}
        J_{1}^{0,q}(\psi_{1,q}) \leq \liminf_{c \rightarrow +\infty} \breve{I}_{c}(\breve{\phi}_{c}).
    \end{equation}
    Combining this with \eqref{eq:suffcond2_2600;gkdv_st}, we conclude that $\lim_{c \rightarrow +\infty} \breve{I}_{c}(\breve{\phi}_{c}) = J_{1}^{0,q}(\psi_{1,q})$.

    The rest of the proof is almost the same as in Lemma \ref{lem:suffcond2_min_seq;gkdv_st}.
\end{proof}

Using Lemma \ref{label:min_seq_caseII1;gkdv_st}, we can prove Lemma \ref{lem:convergence;gkdv_st} similarly to case (I).

Now we consider case (II-2), where $a = -1$, $p$ is even, and $q$ is odd.
We claim the following statement.

\begin{lem} \label{lem:suffcond2_min_seq_II2;gkdv_st}
    Assume condition {\rm (II-2)} in Theorem \ref{thm:existence_of_GS;gkdv_st}.
    Let $\phi_{c}$ be a negative ground state solution to \eqref{eq:Intro0300;gkdv_st} $c > 0$, and $\breve{\phi}_{c}$ is a function obtained by the scaling \eqref{eq:coercivity0300;gkdv_st}.
    Moreover, let $\psi_{1,q}$ be the positive ground state solution to \eqref{eq:coercivity0500;gkdv_st} with $r = q$, and $\chi_{1,q} = - \psi_{1,q}$.
    Then the followings hold:
    \begin{enumerate}[label={\rm (\roman*)} \ ]
        \item $\displaystyle \lim_{c \rightarrow \infty} J_{1}^{0, q}(\breve{\phi}_{c}) = J_{1}^{0, q}(\chi_{1, q})$;
        \item $\displaystyle \lim_{c \rightarrow \infty} K_{1}^{0,q}(\breve{\phi}_{c}) = 0$.
    \end{enumerate}
\end{lem}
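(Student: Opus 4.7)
The proof follows the template of Lemma \ref{label:min_seq_caseII1;gkdv_st} with the negative ground state $\chi_{1,q} = -\psi_{1,q}$ replacing $\psi_{1,q}$ as the reference profile. Two parity facts drive the argument: since $p$ is even, $p+1$ is odd, so $\int_{\mathbb{R}} \chi_{1,q}^{p+1}\,dx = -\|\psi_{1,q}\|_{L^{p+1}}^{p+1} < 0$ and likewise $\int_{\mathbb{R}} \breve{\phi}_c^{p+1}\,dx < 0$ because $\breve{\phi}_c$ is negative; whereas $q+1$ is even, so $\int_{\mathbb{R}} \chi_{1,q}^{q+1}\,dx = \|\psi_{1,q}\|_{L^{q+1}}^{q+1}$ and $\int_{\mathbb{R}} \breve{\phi}_c^{q+1}\,dx > 0$. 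This sign reversal renders $\breve{I}_c$ non\nobreakdash-coercive along $\breve{\phi}_c$, so I instead work with
\begin{equation}
\breve{J}_c(v) \coloneq \breve{S}_c(v) - \frac{1}{p+1}\breve{K}_c(v) = \Bigl(\frac{1}{2} - \frac{1}{p+1}\Bigr)\|v\|_{H^{\sigma/2}}^{2} + \Bigl(\frac{1}{p+1} - \frac{1}{q+1}\Bigr)\int_{\mathbb{R}} v^{q+1}\,dx,
\end{equation}
which is nonnegative on $\breve{\phi}_c$ and strictly increasing along rays $\lambda \mapsto \breve{J}_c(\lambda v)$ whenever $\int_{\mathbb{R}} v^{q+1}\,dx > 0$. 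A routine rescaling then yields $\breve{S}_c(\breve{\phi}_c) = \inf\{\breve{J}_c(v): v \neq 0,\ \breve{K}_c(v) \leq 0\}$, in direct analogy with \eqref{eq:suffcond2_2400;gkdv_st}.

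Testing with $\chi_{1,q}$ and using $K_1^{0,q}(\psi_{1,q}) = 0$ gives $\breve{K}_c(\chi_{1,q}) = -c^{-\beta}\|\psi_{1,q}\|_{L^{p+1}}^{p+1} < 0$ for every $c > 0$, so the characterization above yields $\breve{J}_c(\breve{\phi}_c) \leq \breve{J}_c(\chi_{1,q}) = J_1^{0,q}(\chi_{1,q})$. In particular, $\{\breve{\phi}_c\}_{c}$ is bounded in $H^{\sigma/2}(\mathbb{R})$ and, by Sobolev, in $L^{p+1}$ and $L^{q+1}$. From $\breve{K}_c(\breve{\phi}_c) = 0$ I then read off $K_1^{0,q}(\breve{\phi}_c) = -c^{-\beta}\int_{\mathbb{R}} \breve{\phi}_c^{p+1}\,dx = O(c^{-\beta})$, which proves (ii) and simultaneously yields $\int_{\mathbb{R}} \breve{\phi}_c^{q+1}\,dx = \|\breve{\phi}_c\|_{H^{\sigma/2}}^{2} + o(1)$, hence $\breve{J}_c(\breve{\phi}_c) = J_1^{0,q}(\breve{\phi}_c) + o(1)$. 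Combined with the upper bound this already gives $\limsup_{c \to \infty} J_1^{0,q}(\breve{\phi}_c) \leq J_1^{0,q}(\chi_{1,q})$.

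For the matching $\liminf$, I test $\mu \breve{\phi}_c$ with $\mu > 1$ against the characterization of $\psi_{1,q}$ recorded in \eqref{eq:suffcond2_2200;gkdv_st}. Eliminating $\|\breve{\phi}_c\|_{H^{\sigma/2}}^{2}$ via $\breve{K}_c(\breve{\phi}_c) = 0$ gives
\begin{equation}
K_1^{0,q}(\mu \breve{\phi}_c) = (\mu^2 - \mu^{q+1})\int_{\mathbb{R}} \breve{\phi}_c^{q+1}\,dx - \mu^2 c^{-\beta}\int_{\mathbb{R}} \breve{\phi}_c^{p+1}\,dx.
\end{equation}
The first summand is negative, but the second is \emph{positive} because $\int_{\mathbb{R}} \breve{\phi}_c^{p+1}\,dx < 0$, a sign that had no analogue in case (II-1); this is the main technical obstacle. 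To overcome it I establish a uniform lower bound $\|\breve{\phi}_c\|_{H^{\sigma/2}} \geq c_0 > 0$, obtained by applying the Sobolev embedding to $\breve{K}_c(\breve{\phi}_c) = 0$ exactly as in Lemma \ref{lem:GS_positivity_d;gkdv_st}, with the $c^{-\beta}$ term treated as an admissible perturbation for $c$ large. This forces $\int_{\mathbb{R}} \breve{\phi}_c^{q+1}\,dx \geq c_0^2/2$ for $c$ large, so the negative first summand dominates and $K_1^{0,q}(\mu\breve{\phi}_c) < 0$ for $c$ sufficiently large depending on $\mu$. The characterization \eqref{eq:suffcond2_2200;gkdv_st} then gives $J_1^{0,q}(\psi_{1,q}) \leq J_1^{0,q}(\mu\breve{\phi}_c) = \mu^2 J_1^{0,q}(\breve{\phi}_c)$, and letting $\mu \to 1^+$ after taking $\liminf_{c \to \infty}$ completes (i) via $J_1^{0,q}(\psi_{1,q}) = J_1^{0,q}(\chi_{1,q})$.
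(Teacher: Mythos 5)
Your proof is correct and follows essentially the same route as the paper: the $c$-independent reduced functional $\breve J_c = \breve S_c - \tfrac{1}{p+1}\breve K_c = S_1^{0,q} - \tfrac{1}{p+1}K_1^{0,q}$ (the paper's $I$), the test function $\chi_{1,q}$ with $\breve K_c(\chi_{1,q})<0$ for the upper bound, and $\mu\breve\phi_c$ tested against \eqref{eq:suffcond2_2200;gkdv_st} for the lower bound. You in fact supply the lower-bound details the paper defers to ``almost the same as before,'' and you correctly track the sign $K_1^{0,q}(\breve\phi_c)=-c^{-\beta}\int_{\mathbb{R}}\breve\phi_c^{p+1}\,dx>0$ (so $J_1^{0,q}(\breve\phi_c)$ exceeds the reduced functional by an $O(c^{-\beta})$ amount rather than being dominated by it, as the paper loosely asserts), which is precisely the case-(II-2) subtlety your uniform lower bound on $\|\breve\phi_c\|_{H^{\sigma/2}}$ is designed to absorb.
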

\begin{proof}
    Now we put
    \begin{equation}
        I(v) \coloneq \left( \frac{1}{2} - \frac{1}{p+1} \right) \| v \|_{H^{\sigma/2}}^{2} + \left( \frac{1}{p+1} - \frac{1}{q+1} \right) \| v \|_{L^{q+1}}^{q+1},
    \end{equation}
    for $v \in H^{\sigma/2}(\mathbb{R})$.
    We can see that
    \begin{equation}
        I(v) = \breve{S}_{c}(v) - \frac{1}{p+1} \breve{K}_{c}(v) =S_{1}^{0,q}(v) - \frac{1}{p+1} K_{1}^{0,q}(v).
    \end{equation}
    Then, we can see the following characterizations:
    \begin{align}
        \breve{S}_{c}(\breve{\phi}_{c}) &= \inf \{ \breve{S}_{c}(v): v \in H^{\sigma/2}(\mathbb{R}) \setminus \{0\}, \ \breve{K}_{c}(v) = 0 \} \\
        &= \inf \{ I(v): v \in H^{\sigma/2}(\mathbb{R}) \setminus \{0\}, \ \breve{K}_{c}(v) = 0 \} \\
        &= \inf \{ I(v): v \in H^{\sigma/2}(\mathbb{R}) \setminus \{0\}, \ \breve{K}_{c}(v) \leq 0 \}, \label{eq:suffcond2_2700;gkdv_st} \\
        S_{1}^{0,q}(\chi_{1,q}) = S_{1}^{0,q}(\psi_{1,q}) &= \inf \{ S_{1}^{0,q}(v): v \in H^{\sigma/2}(\mathbb{R}), \ K_{1}^{0,q}(v) = 0 \} \\
        &= \inf \{ J_{1}^{0,q}(v): v \in H^{\sigma/2}(\mathbb{R}), \ K_{1}^{0,q}(v) = 0 \} \\
        &= \inf \{ J_{1}^{0,q}(v): v \in H^{\sigma/2}(\mathbb{R}), \ K_{1}^{0,q}(v) \leq 0 \} \\
        &= \inf \{ I(v): v \in H^{\sigma/2}(\mathbb{R}), \ K_{1}^{0,q}(v) = 0 \} \\
        &= \inf \{ I(v): v \in H^{\sigma/2}(\mathbb{R}), \ K_{1}^{0,q}(v) \leq 0 \} \label{eq:suffcond2_2710;gkdv_st}
    \end{align}

    Since $K_{1}^{0, q}(\chi_{1, q}) = 0$, we have
    \begin{equation}
        \breve{K}_{c}(\chi_{1, q}) = c^{-\beta} \int_{\mathbb{R}} \chi_{1, q}^{p + 1} \, dx = - c^{-\beta} \int_{\mathbb{R}} \psi_{1, q}^{p+1} \, dx < 0
    \end{equation}
    for $c > 0$.
    Combining this with \eqref{eq:suffcond2_2700;gkdv_st} and \eqref{eq:suffcond2_2710;gkdv_st}, we obtain
    \begin{equation}
        J_{1}^{0, q}(\breve{\phi}_{c}) \leq I(\breve{\phi}_{c}) \leq I(\psi_{1, q}) = J_{1}^{0, q}(\psi_{1, q}),
    \end{equation}
    which implies that $\limsup_{c \rightarrow +\infty} J_{1}^{0, q}(\breve{\phi}_{c}) \leq J_{1}^{0, q}(\psi_{1, q})$ and that the family $\{ \breve{\phi}_{c} \}_{c > 0}$ is bounded in $H^{\sigma/2}(\mathbb{R})$.

    The rest of the proof is almost the same as in Lemmas \ref{lem:suffcond2_Nehari_est;gkdv_st} and \ref{lem:suffcond2_min_seq;gkdv_st}.
\end{proof}

By Lemma \ref{label:min_seq_caseII1;gkdv_st}, we can prove Lemma \ref{lem:convergence;gkdv_st} for case (II-2) in a similar way to case (I).

\section*{Acknowledgment}
The author would like to thank Professor Masahito Ohta for his encouragements and helpful discussions for this study.

\bibliography{reference_list}
\bibliographystyle{amsplain}

\end{document}